\crefname{hypothesis}{Hypothesis}{Hypotheses}
\def\mca{\mathcal}
\def\Q{{\mathbb Q}}
\def\Z{{\mathbb Z}}
\def\C{{\mathbb C}}
\def\R{{\mathbb R}}
\def\del{\partial}
\def\diff{\mathrm{Diff}}
\def\ech{\mathrm{ECH}}
\def\emb{\mathrm{emb}}
\def\ep{\varepsilon} 
\def\gen{\mathrm{gen}}
\def\Ham{\mathrm{Ham}} 
\def\id{\mathrm{id}} 
\def\image{\mathrm{Im}} 
\def\inj{\mathrm{inj}}
\def\MH{\mathrm{MH}}
\def\nondeg{\mathrm{nondeg}} 
\def\ph{\varphi} 
\def\pr{\mathrm{pr}}
\def\supp{\mathrm{supp}}
\def\vol{\mathrm{vol}}
\begin{document}

\title{\Large Strong closing lemmas in Hamiltonian dynamics} 
\author{Kei Irie  \thanks{Research Institute for Mathematical Sciences, Kyoto University (\email{iriek@kurims.kyoto-u.ac.jp})} } 
%    \and Rachel Ginder\footnotemark[2]    
%    \and Mitch C. Donovan\thanks{Department of Applied Mathematics, Fictional University, Austin, TX and NASA, Johnson Space Center, Houston, TX
%  (\email{donovan@fictional.edu}, \email{mcd@nasa.gov}, \url{https://www.nasa.gov/johnson/}).}}

\date{\today}

\maketitle

% Copyright Statement
% When submitting your final paper to a SIAM proceedings, it is requested that you include
% the appropriate copyright in the footer of the paper.  The copyright added should be
% consistent with the copyright selected on the copyright form submitted with the paper.
% Please note that "20XX" should be changed to the year of the meeting.

% Default Copyright Statement
\fancyfoot[R]{\scriptsize{Copyright \textcopyright\ 2026 by SIAM\\
Unauthorized reproduction of this article is prohibited}}

% Depending on which copyright you agree to when you sign the copyright form, the copyright
% can be changed to one of the following after commenting out the default copyright statement
% above.

%\fancyfoot[R]{\scriptsize{Copyright \textcopyright\ 20XX\\
%Copyright for this paper is retained by authors}}

%\fancyfoot[R]{\scriptsize{Copyright \textcopyright\ 20XX\\
%Copyright retained by principal author's organization}}

%\pagenumbering{arabic}
%\setcounter{page}{1}%Leave this line commented out.

\begin{abstract}
This survey focuses on strong closing lemmas in Hamiltonian dynamics 
that are proved using spectral invariants (also known as action selectors) in symplectic geometry.  
We review strong closing lemmas in low-dimensional Hamiltonian dynamics
(Reeb flows on contact three-manifolds and area-preserving maps on symplectic surfaces) and outline the key ideas behind their proofs. 
We also discuss results concerning strong closing lemmas in high-dimensional Hamiltonian dynamics, 
as well as analogous results for minimal hypersurfaces. 
\end{abstract}

\section{Introduction.}

Periodic orbits (or closed orbits) are a fundamental object in the theory of dynamical systems.
In particular, periodic orbits of Hamiltonian systems can be characterized as critical points of the Hamiltonian action functional on the free loop space, which makes it possible to apply variational methods to detect them. This idea has been extensively developed in modern symplectic geometry.
For example, the Arnold conjecture on the number of periodic orbits of time-dependent Hamiltonian systems --- formulated as a far-reaching generalization of the classical Poincar\'{e}--Birkhoff theorem --- has been one of the central problems in the field.  

The \textit{closing lemma} offers a completely different approach to finding periodic orbits. 
This approach is also generally regarded as having its roots in the classical work of Poincar\'{e} \cite{Poincare}. 
Roughly speaking, the closing lemma asserts the following: 
given a dynamical system and a near-periodic orbit of the system, 
one can make this orbit genuinely periodic by a small perturbation of the system
--- in other words, one can ``close'' the given near-periodic orbit. 
The closing lemma has been an important topic in the theory of dynamical systems. 
In particular, Pugh's $C^1$-closing lemma (\cite{Pugh_closing_lemma}, \cite{Pugh_generic_density})is fundamental in the study of $C^1$-generic dynamics.
On the other hand, it has long been known that establishing $C^r$-closing lemmas for $r>1$ is extremely difficult. 

The first goal of this paper is to explain that, by employing techniques from modern symplectic geometry, we can establish very strong closing lemmas
---stronger than the $C^\infty$-closing lemmas---for certain low-dimensional Hamiltonian dynamics. After accomplishing this, we discuss possible extensions of these results to higher dimensions.
This is a survey article, with only a few new results in Section~7.1. 
Some material is adapted from another survey \cite{Irie_suugaku} by the author. In the rest of this section, we outline the contents of this paper.

\subsection{$C^1$-closing lemmas} 

Let us first formally state the $C^r$-closing problem for $r \in \Z_{\ge 1} \cup \{\infty\}$. 
Here we only consider dynamical systems defined by vector fields. 
Let $M$ be a closed manifold (i.e. $M$ is compact and $\del M= \emptyset$), 
and let $\mca{X}_{C^r}(M)$ denote the set of $C^r$-vector fields on $M$, 
equipped with the $C^r$-topology. 
This is a Bari\'{e} space, i.e. any residual subset (a subset which contains the intersection of countably many open and dense sets) is dense. 
For any $X \in \mca{X}_{C^r}(M)$, 
we define an isotopy $(\ph^t_X)_{t \in \R}$ on $M$ 
by $\ph^0_X = \id_M$ 
and $\del_t \ph^t_X = X(\ph^t_X)\,(\forall t \in \R)$. 
We say $p \in M$ is \textit{periodic}  if $\ph^t_X(p)=p$ for some $t>0$, 
and $p$ is \textit{nonwondering} 
if for any neighborhood $U$ of $p$ and any $T>0$, 
there exists $t>T$ such that $\ph^t_X(U) \cap U \ne \emptyset$. 
The set of all nonwondering points of $X$ is denoted by $\mathrm{NW}(X)$. 
Obviously, every periodic point is nonwandering. 
Although the converse implication is clearly false, it is natural to ask whether it holds after perturbing the vector field. 
This question is known as the closing problem. 
We now formally state the $C^r$-closing problem: 
\begin{quote} 
Let $p \in M$ be a nonwondering point of $X \in \mca{X}_{C^r}(M)$ 
and $\mca{U}$ be a neighborhood of $X$ in $\mca{X}_{C^r}(M)$, equipped with the $C^r$-topology. 
Does there exist $X' \in \mca{U}$ such that $p$ is a periodic point of $X'$? 
\end{quote} 
An affirmative answer to the $C^r$-closing problem is called the $C^r$-closing lemma. 
Obviously, the $C^r$-closing lemma gets stronger as $r$ gets larger. 
The $C^1$-closing lemma is already extremely hard to prove,  
and this was established by Pugh in his monumental work (\cite{Pugh_closing_lemma}, \cite{Pugh_generic_density}). 
As an important consequence of the $C^1$-closing lemma, Pugh \cite{Pugh_generic_density} proved the $C^1$-generic density of periodic points: 
for generic $X \in \mca{X}_{C^1}(M)$, the set of all periodic points of $X$ is dense in $\mathrm{NW}(X)$. 
Here `generic' means the following: for any topological space $T$, we say \textit{generic} $\tau \in T$ satisfies a certain property $P$ if the set of $\tau \in T$ satisfying $P$ is a residual subset of $T$. 

It is straightforward to formulate $C^r$-closing lemmas for more restrictive dynamical systems 
such as volume-preserving, symplectic, and Hamiltonian systems. 
Due to additional restrictions proving $C^1$-closing lemmas for these systems are more challenging, 
and this was achieved by Pugh--Robinson \cite{Pugh_Robinson}. 
In Section 2, we explain this result for Hamiltonian systems.

\subsection{$C^r$-closing lemmas for $r \ge 2$} 

It was already pointed out in \cite{Pugh_generic_density} that there is a fundamental difficulty to extend 
Pugh's proof of the $C^1$-closing lemma to prove the $C^r$-closing lemma for $r \ge 2$. 
Proving the $C^r$-closing lemma for $r \ge 2$ for general smooth dynamical systems 
is considered to be an extremely difficult problem, which is also in the Smale's 18 problems \cite{Smale}. 
On the other hand, ``Hamiltonian smooth closing lemma'' was disproved by Herman \cite{Herman}, as we discuss in Section 2. 

\subsection{Strong closing lemmas for low-dimensional Hamiltonian dynamics} 

In Section 3, we state strong closing lemmas 
for Reeb flows on contact $3$-manifolds and area-preserving maps of symplectic surfaces. 
The first result was proved by \cite{Irie_JMD}, and the second result was independently proved by 
\cite{CGPZ} and 
\cite{Edtmair_Hutchings} (combined with a result in \cite{CGPPZ}).  
Roughly speaking, strong closing lemma asserts that 
any $1$-parameter family of perturbations satisfying a very weak assumption can create a periodic orbit
which intersects the support of the perturbation. 
This is much stronger than the $C^\infty$-closing lemma, and in particular implies the $C^\infty$-generic density of periodic orbits. 

The proofs of these strong closing lemmas use the notion of \textit{spectral invariants} from symplectic geometry. 
In Section 4, we introduce this notion and explain a key observation (Lemma \ref{lem_local_sensitivity}) 
on how to use spectral invariants to prove strong closing lemmas. 
In Section 5, we mainly focus on the case of $3$-dimensional Reeb flows and outline two constructions of spectral invariants, both due to Hutchings, 
and explain properties of these invariants, including the ``Weyl law''---which is the key ingredient in the proofs of strong closing lemmas. 
The first construction uses ECH (embedded contact homology), and the second construction is an elementary alternative of the first one. 
Using the second construction, we sketch a relatively elementary proof of the strong closing lemma for 
the standard contact $3$-sphere (Section 5.4). 
In Section 6, we explain results on generic equidistribution of periodic orbits, which are quantitative refinements of the generic density of periodic orbits. 

\subsection{Strong closing lemmas for high-dimensional Hamiltonian dynamics} 

It is a wide open question whether the strong closing lemmas hold for high-dimensional Hamiltonian dynamics. 
In Section 7, we explain why an attempt to prove strong closing lemmas using elementary spectral invariants fails in high dimensions, 
and review recent results establishing strong closing lemmas for very special high-dimensional Hamiltonian systems. 

\subsection{Minimal hypersurfaces} 

In recent advances in the theory of minimal hypersurfaces, 
there is a story which is very similar to the one we have explained for low-dimensional Hamiltonian dynamics. 
In Section 8 we briefly explain this story. 

\section{Hamiltonian $C^1$-closing lemma and Herman's example} 

Hamiltonian system is a mathematical formulation of the equation of motion in classical mechanics. 
In the language of symplectic geometry, Hamiltonian system is a triple $(M,\omega, H)$, 
where $(M,\omega)$ is a $2n$-dimensional symplectic manifold\footnote{We always assume that manifolds are second-countable.} 
 (i.e. $\omega \in \Omega^2(M)$ is closed and nondegenerate), 
and $H$ is an $\R$-valued function on $M$. 
We assume that $H$ is at least $C^2$, 
and define the Hamiltonian vector field $X_H$ by the formula 
$i_{X_H}\omega= -dH$. 

Let us assume that $\del M = \emptyset$ and $H: M \to \R$ is proper. 
Then $X_H$ defines an isotopy $(\ph^t_{X_H})_{t \in \R}$, 
which we abbreviate by $(\ph^t_H)_{t \in \R}$. 
Cartan's formula shows that $L_{X_H}\omega=0$, 
thus each $\ph^t_H$ preserves $\omega$, 
in particular it preserves the volume form $\omega^n$. 
Then every $p \in M$ is nonwondering, since $H$ is proper. 

Let $\mca{H}_{C^2}(M)$ denote the set of $C^2$-proper functions on $M$. 
Equipped with the $C^2$-Whitney topology, $\mca{H}_{C^2}(M)$ is a Bair\'{e} space (see \cite{Pugh_Robinson}).  
Now we can state the Hamiltonian $C^1$-closing lemma by Pugh--Robinson \cite{Pugh_Robinson}: 

\begin{theorem}[\cite{Pugh_Robinson}] 
Let $H \in \mca{H}_{C^2}(M)$ and $p \in M$. 
For any neighborhood $\mca{U}$ of $H$ in $\mca{H}_{C^2}(M)$, 
there exists $H' \in \mca{U}$ such that $p$ is a periodic point of $X_{H'}$. 
\end{theorem}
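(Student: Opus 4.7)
The plan is to follow Pugh's original spreading argument for the $C^1$-closing lemma, adapted so that all perturbations of $X_H$ arise as Hamiltonian vector fields of $C^2$-small functions. First I would dispose of the degenerate case: if $X_H(p)=0$ then $p$ is a fixed point and already periodic, so assume $X_H(p)\ne 0$. In this case the level set $\Sigma = H^{-1}(H(p))$ is a smooth hypersurface near $p$, and its component through $p$ is compact by properness of $H$; the flow on $\Sigma$ preserves a finite invariant measure obtained by reducing $\omega^n$ along $X_H$. Combined with the nonwandering property already noted in the excerpt, this supplies a sequence $q_k \to p$ and times $t_k \to \infty$ with $\ph^{t_k}_H(q_k) \to p$.

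Next I would set up a flow box around $p$ with coordinates $(s,y)$, where $s$ is the flow direction and $y$ parametrizes a local Poincar\'{e} section $D$ transverse to $X_H$ (carrying the area form from symplectic reduction of $\Sigma$). The task then becomes: produce an $h$ with $\|h\|_{C^2}$ arbitrarily small such that the return map of $X_{H+h}$ to $D$ maps $p$ exactly to $p$. Following Pugh, I would select a small secondary flow box $B'$ sitting along the orbit segment from $q_k$ to $\ph^{t_k}_H(q_k)$ but far from both endpoints, and build $h$ supported in $B'$ of the form (plateau in $s$) times (a function of transverse coordinates); the resulting Hamiltonian vector field is a pure shear across $B'$ whose parameters can be tuned to push the returning trajectory by a prescribed displacement in $D$.

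The main obstacle is the well-known quantitative tension in Pugh's argument. The discrepancy to be corrected, namely $p-\ph^{t_k}_H(q_k)$ in $D$-coordinates, can be made arbitrarily small by recurrence but is not a priori comparable to any assigned perturbation budget, while a single bump of width $\rho$ and $C^2$-norm $\ep$ produces only a shear of order $\ep\rho$. Pugh resolves this by \emph{spreading} the perturbation across many disjoint parallel flow-box slices along the orbit of $p$, so that many small bumps sum to a macroscopic displacement while each individual piece remains $C^2$-small. Pugh--Robinson's specific contribution is to check that this spreading is compatible with the Hamiltonian constraint: within each flow box the symplectic-orthogonal directions provide enough parameters to realize the required tangential displacement on $D$ as the gradient of a $C^2$-small function, so the shear can be implemented by a genuine Hamiltonian rather than merely a volume-preserving vector field. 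Carrying out these $C^2$-estimates rigorously while ensuring that the cumulative effect of the spread-out bumps neither undershoots nor overshoots the prescribed displacement is the technical heart of the proof.
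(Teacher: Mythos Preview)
The paper does not supply its own proof of this theorem: it is stated with attribution to Pugh--Robinson \cite{Pugh_Robinson} and then used as a black box to deduce Corollary~\ref{cor_Hamiltonian_generic_density}. There is therefore no in-paper argument to compare your proposal against.

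That said, your outline is a faithful high-level summary of the actual Pugh--Robinson strategy: reduce to a local Poincar\'e section in a flow box, exploit recurrence to find nearly-closed orbit segments, and close them by a perturbation built from many small Hamiltonian bumps spread along the orbit so that the total displacement is macroscopic while each piece stays $C^2$-small. Two minor remarks. First, you do not need to rederive nonwandering via an invariant measure on the energy level; the paper already observes just before the theorem that properness of $H$ and preservation of $\omega^n$ make every point nonwandering. Second, your sketch correctly identifies the ``spreading'' estimates as the technical heart but does not attempt them; in the actual proof this is where almost all the work lies (the Pugh--Robinson ``lift axiom'' and its verification for the Hamiltonian category), and a complete argument requires substantially more than what you have written. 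As a proof \emph{proposal} pointing to the right literature and the right mechanism, however, it is accurate.
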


Note that if $H'$ is $C^2$-close to $H$ then $X_{H'}$ is $C^1$-close to $X_H$. 
This is why we call this theorem $C^1$-closing lemma. 
An important corollary of the Hamiltonian $C^1$-closing lemma is the $C^1$-generic density of periodic orbits: 

\begin{corollary}\label{cor_Hamiltonian_generic_density} 
For generic $H \in \mca{H}_{C^2}(M)$, the set of periodic points of $X_H$ is dense in $M$. 
\end{corollary}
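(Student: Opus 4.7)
The plan is a standard Baire-category argument based on the Hamiltonian $C^1$-closing lemma stated above. Since $M$ is second-countable, fix a countable basis $\{U_n\}_{n \in \mathbb{N}}$ of open sets of $M$. For each $n$, define
\[
\mathcal{V}_n := \{H \in \mathcal{H}_{C^2}(M) \mid X_H \text{ has a non-degenerate periodic orbit meeting } U_n\},
\]
where non-degeneracy means that $1$ is not an eigenvalue of the linearized Poincar\'{e} return map. By the implicit function theorem, a non-degenerate closed orbit persists and depends continuously on the Hamiltonian in the $C^2$-topology, so each $\mathcal{V}_n$ is open in $\mathcal{H}_{C^2}(M)$.

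To show $\mathcal{V}_n$ is dense, I would fix $H \in \mathcal{H}_{C^2}(M)$ and an arbitrary $C^2$-neighborhood $\mathcal{U}$ of $H$. Picking any point $p \in U_n$, the Pugh--Robinson closing lemma (applied to a smaller neighborhood $\mathcal{U}' \subset \mathcal{U}$) yields $H_0 \in \mathcal{U}'$ such that $p$ is a periodic point of $X_{H_0}$. The resulting closed orbit may be degenerate, so I would perform a second, arbitrarily small, compactly supported perturbation of $H_0$ localized in a Darboux-like tubular neighborhood of the orbit---for instance, adding a small quadratic function in the transverse symplectic coordinates---designed to shift the Floquet multipliers off $1$ without moving the orbit. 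The perturbed Hamiltonian then lies in $\mathcal{U} \cap \mathcal{V}_n$.

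Once openness and density of every $\mathcal{V}_n$ are established, the Baire property of $\mathcal{H}_{C^2}(M)$ implies that $\mathcal{R} := \bigcap_n \mathcal{V}_n$ is residual, hence dense. For any $H \in \mathcal{R}$ and any open $U \subset M$, picking a basic set $U_n \subset U$ gives $H \in \mathcal{V}_n$, hence $X_H$ has a periodic point in $U_n \subset U$; this is precisely density of periodic points.

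The main obstacle is the second perturbation step used to secure openness: one must perturb a possibly degenerate closed orbit to a non-degenerate one while remaining within the class of Hamiltonian vector fields (perturbing through scalar potentials, not arbitrary vector fields) and without destroying the orbit itself. This is a routine but careful Hamiltonian analogue of a Kupka--Smale argument, and it is the only non-formal ingredient required beyond the closing lemma.
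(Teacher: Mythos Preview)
Your argument is correct and follows essentially the same route as the paper's proof: define, for each basic open set $U$, the set of Hamiltonians admitting a nondegenerate periodic orbit through $U$, verify that these sets are open and dense, and intersect over a countable base. The only difference is that you spell out the two-step density argument (closing lemma followed by a small Hamiltonian perturbation to achieve nondegeneracy), which the paper's sketch leaves implicit.
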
 
\begin{proof}[Proof (sketch)]
A nonconstant periodic orbit of $X_H$ is called \textit{nondegnerate} 
if $1$ is not an eigenvalue of its linearized return map. 
For any nonempty open set $U \subset M$, let $\mca{H}_U$ denote the set of $H \in \mca{H}_{C^2}(M)$ such that 
there exists a nondegenerate periodic orbit of $X_H$ which intersects $U$. 
It is easy to see that $\mca{H}_U$ is open, and 
the Hamiltonian $C^1$-closing lemma shows that $\mca{H}_U$ is dense.
Take a countable base of open sets of $M$, which we denote by $(U_i)_i$. 
Then $\bigcap_i \mca{H}_{U_i}$ is residual. 
If $H \in \bigcap_i \mca{H}_{U_i}$ then the set of periodic points of $X_H$ is dense in $M$. 
\end{proof} 

\begin{remark} 
Proving the $C^1$-generic density of periodic orbits for non-conservative dynamics 
(which was achieved in \cite{Pugh_generic_density}) is harder, since $\mathrm{NW}(X)$ may vary on $X$. 
\end{remark} 

As we mentioned in the introduction, 
``Hamiltonian smooth closing lemma'' was disproved by Herman \cite{Herman}. 

\begin{theorem}[\cite{Herman}] 
For any $n \in \Z_{\ge 2}$, 
there exists a symplectic form $\omega$ on $T^{2n}:= (\R/\Z)^{2n}$
and nonempty open sets  $U \subset T^{2n}$ and $\mca{U} \subset C^{2n+2}(T^{2n})$ 
such that for any $H \in \mca{U}$, no periodic orbit of $X_H$ intersects $U$. 
\end{theorem}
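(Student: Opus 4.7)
The plan is to exhibit an explicit Hamiltonian whose flow is a Diophantine linear motion on an invariant open set, and then to invoke a finite-regularity KAM argument to propagate the absence of periodic orbits to a $C^{2n+2}$-neighborhood. Concretely, write $T^{2n} = T^n_\theta \times T^n_\phi$ with the standard symplectic form $\omega_0 = \sum_i d\theta_i \wedge d\phi_i$, fix a Diophantine vector $\alpha = (\alpha_1,\dots,\alpha_n) \in \R^n$, pick intervals $I_i \subset T^1$, and choose smooth functions $H_i:T^1 \to \R$ with $H_i'|_{I_i} \equiv \alpha_i$. Set $H_0(\theta,\phi) := \sum_i H_i(\theta_i)$ and $U := I_1 \times \cdots \times I_n \times T^n$. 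A direct computation gives $X_{H_0} = \sum_i H_i'(\theta_i)\,\partial_{\phi_i}$, which on $U$ is the constant Diophantine vector field $\sum_i \alpha_i \partial_{\phi_i}$; hence $U$ is $X_{H_0}$-invariant, and each fiber $\{\theta=\theta_0\}$ with $\theta_0 \in \prod_i I_i$ is a Lagrangian torus carrying the translation $\phi \mapsto \phi + t\alpha$. The assumption $n \geq 2$ is essential: for $n=1$ a constant flow on $T^1$ is automatically periodic, while for $n \geq 2$ a Diophantine $\alpha$ produces an aperiodic motion, so no orbit of $X_{H_0}$ through $U$ closes up.

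For the perturbative step I would take $\mathcal{U} \subset C^{2n+2}(T^{2n})$ to be a small neighborhood of $H_0$ and argue that no periodic orbit of $X_H$ meets $U$ for any $H \in \mathcal{U}$. The strategy combines a finite-smoothness KAM theorem (in the spirit of P\"oschel or Salamon--Zehnder; the exponent $C^{2n+2}$ reflects the small-divisor loss in dimension $2n$) with a complementary estimate on ``gap'' orbits. KAM would produce inside $U$ a positive-measure family of invariant Lagrangian tori with Diophantine frequencies close to $\alpha$, on each of which the perturbed dynamics is $C^0$-conjugate to a Diophantine translation and hence periodic-orbit-free. The gap argument uses the Diophantine small-divisor bound on $\alpha$ to show that the cumulative $\phi$-displacement of an orbit of $X_H$ spending bounded time in $U$ cannot simultaneously land in $\Z^n$ and cancel any $\theta$-drift induced by the perturbation, so such an orbit is aperiodic as well.

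The main obstacle is that the model $H_0$ is completely degenerate in the KAM sense: the frequency vector $\alpha$ is the same on every torus in the foliation of $U$, so the classical twist/nondegeneracy hypothesis underlying standard KAM fails outright. Handling this requires either a parametric KAM argument---varying the target frequency over Diophantine vectors near $\alpha$ and inserting a counter-term to absorb the degeneracy---or else replacing $H_0$ by a nearby Hamiltonian with a non-trivial frequency map before applying KAM. A further subtlety is that KAM only yields a Cantor family of surviving invariant tori, whereas the statement asks that the entire open set $U$ be periodic-orbit-free; eliminating closed orbits in the residual resonant gaps, by means of a Birkhoff-type normal form together with the Diophantine estimates, is the delicate core of the argument and is the step that genuinely forces both the careful choice of $(\omega,H_0)$ and the specific regularity exponent $2n+2$ appearing in the final statement.
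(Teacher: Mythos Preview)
Your setup has a flaw that no amount of KAM machinery can repair: with the standard form $\omega_0$ and your $H_0$, arbitrarily $C^\infty$-small perturbations already create periodic orbits throughout $U$. Take $H_\ep = H_0 + \ep\, g(\theta)$ with $g \in C^\infty(T^n)$ chosen so that $\nabla g$ restricted to $\prod_i I_i$ has image with nonempty interior. On $U$ one still has $\dot\theta = 0$ and $\dot\phi = \alpha + \ep\nabla g(\theta)$, so as $\theta$ ranges over $\prod_i I_i$ the frequency vector sweeps out an open neighbourhood of $\alpha$; rational directions being dense, resonant tori---hence periodic orbits---fill a dense subset of $U$ for every $\ep \neq 0$. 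Your alternative of endowing $H_0$ with a nontrivial frequency map fails for the same reason even before any perturbation, and your gap heuristic via the Diophantine bound only excludes \emph{short} closed orbits, not the arbitrarily long ones this counterexample produces. The dichotomy you correctly flagged---complete degeneracy versus dense resonances under twist---is thus not a technicality but a genuine obstruction to building the example out of Lagrangian $n$-tori for the standard $\omega_0$.

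The phrase ``there exists a symplectic form $\omega$'' in the statement is the hint: Herman does \emph{not} use the standard form. One takes a constant-coefficient symplectic form on $T^{2n}$ chosen so that the Hamiltonian vector field of any function of $x_1$ alone points in a fixed Diophantine direction in the remaining $2n-1$ coordinates (equivalently, the first column of $\omega^{-1}$ is a Diophantine vector). The relevant invariant sets are then the codimension-one energy hypersurfaces $\{x_1=c\}\cong T^{2n-1}$, each carrying the \emph{same} linear Diophantine flow, so there is no frequency map to sweep through resonances. Under a small perturbation of $H$ the energy hypersurfaces persist automatically as nearby graphs, and a KAM-type argument (this is what the paper defers to \cite{HZ} Section~4.5, and where the exponent $2n+2$ originates) shows that the characteristic flow on each of them remains free of periodic orbits; hence no closed orbit meets the open slab $U$ they foliate.
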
 

The proof of the non-existence of periodic orbits is based on a version of KAM theory; see \cite{HZ} Section 4.5 for details. 
The author is not aware whether the exponent $2n+2$ can be improved or not.

\section{Strong closing lemmas in low-dimensional Hamiltonian dynamics} 

We recall basics of contact manifolds and Reeb flows (Section 3.1), 
state the strong closing lemma for $3$-dimensional Reeb flows (Section 3.2), 
discuss its consequence for closed geodesics on surfaces (Section 3.3), 
and state the strong closing lemma for area-preserving surface maps (Section 3.4). 

\subsection{Contact manifolds and Reeb flows} 

Let $n \in \Z_{\ge 1}$ and 
let $Y$ be a $2n-1$-dimensional manifold. 
$\lambda \in \Omega^1(Y)$ is called a \textit{contact form} 
if $\lambda \wedge (d\lambda)^{n-1}(p) \ne 0$ for all $p \in Y$. 
For any contact form $\lambda$, 
the hyperplane field $\xi_\lambda:= \ker(\lambda)$ on $Y$ 
has a natural co-orientation: 
$[v] \in TY/\xi_\lambda$ is positive if and only if $\lambda(v)>0$. 

A co-oriented hyperplane field $\xi$ on $Y$ is called a \textit{contact distribution} 
if there exists a contact form $\lambda$ such that $\xi_\lambda=\xi$. 
The pair $(Y,\xi)$ is called a \textit{contact manifold}, 
and $\Lambda(Y, \xi)$ denotes the set of contact forms $\lambda$ such that $\xi_\lambda=\xi$. 
It is easy to see that $\Lambda(Y,\xi)=\{ e^h \lambda \mid h \in C^\infty(Y) \}$ for any $\lambda \in \Lambda(Y,\xi)$. 

For any contact form $\lambda$, 
the \textit{Reeb vector field}  $R_\lambda \in \mathfrak{X}(Y)$
is characterized by the equations 
$i_{R_\lambda} (d\lambda) \equiv 0$ and $\lambda(R_\lambda) \equiv 1$. 
A \textit{periodic Reeb orbit} of $\lambda$ 
is a map $\gamma: S^1 \to Y$ such that 
there exists $T_\gamma>0$ satisfying $\dot{\gamma} = T_\gamma \cdot R_\lambda(\gamma)$. 
We call $T_\gamma$ the \textit{period} or \textit{action} of $\gamma$. 
The set of all periodic Reeb orbits of $\lambda$ is denoted by $\mca{P}(Y,\lambda)$. 
The \textit{generalized Weinstein conjecture} states that
$\mca{P}(Y,\lambda) \ne \emptyset$
for any contact form $\lambda$ on any closed manifold $Y$. 
Taubes \cite{Taubes_Weinstein} fully proved the conjecture for the case $\dim Y = 3$.
The conjecture is still widely open for the case $\dim Y \ge 5$. 

$\gamma \in \mca{P}(Y, \lambda)$ is called \textit{simple} if it is injective as a map from $S^1$ to $Y$. 
Let $\mca{P}_\emb(Y,\lambda)$ denote 
the quotient of the set of all simple periodic Reeb orbits of $\lambda$ 
divided by the natural $S^1$-action on it. 

$\gamma \in \mca{P}(Y,\lambda)$ is called \textit{nondegenerate} if $1$ is not an eigenvalue of its linearized return map. 
$\lambda$ is called \textit{nondegenerate} if all its periodic Reeb orbits (including iterated orbits) are nondegenerate. 
For any contact manifold $(Y, \xi)$, let $\Lambda_\nondeg(Y,\xi)$ denote the set of nondegenerate elements in $\Lambda(Y,\xi)$. 
It is known that $\Lambda_\nondeg(Y,\xi)$ is a residual subset of $\Lambda(Y,\xi)$ with the $C^\infty$-topology. 
Let us see two basic examples of Reeb flows. 

\begin{example}[Boundary of symplectic ellipsoid]\label{ex_ellipsoids} 
For any $n \in \Z_{\ge 1}$, let $A_n$ be the set of tuples 
$a=(a_1,\ldots, a_n) \in \R^n$ such that $0<a_1 \le \cdots \le a_n$. 
For any $a \in A_n$, let 
\[ 
E_a:= \bigg\{ (q_1, \ldots, q_n, p_1, \ldots, p_n) \in \R^{2n} \biggm{|}  \sum_{j=1}^n  \frac{ \pi (q_j^2+p_j^2)}{a_j} \le 1  \bigg\}.
\] 
Let $\lambda_n:= \frac{1}{2} \sum_{j=1}^n p_j dq_j - q_j dp_j \in \Omega^1(\R^{2n})$. 
For each $a \in A_n$, $\lambda_a:= \lambda_n|_{\del E_a}$ is a contact form on $\del E_a$, 
and $R_{\lambda_a}=\sum_{j=1}^n (2\pi/a_j) \cdot (p_j \del_{q_j} - q_j \del_{p_j})$. 
If $a_i/a_j \in \Q$ for all $i \le j$, 
then all Reeb orbits are periodic. 
If $a_i/a_j \not\in \Q$ for all $i<j$, 
then $\# \mca{P}_\emb(\del E_a, \lambda_a)=n$.
\end{example} 

\begin{example}[Geodesic flow]\label{ex_geodesic} 
For any $C^\infty$-manifold $N$, 
let us define $\lambda_N \in \Omega^1(T^*N)$ by 
\[ 
\lambda_N(v):= p (\pr_*(v)) \qquad( q \in N, \, p \in T_q^*N, \, v \in T_{(q,p)}(T^*N)). 
\] 
Here $\pr: T^*N \to N$ is the projection to $N$. 
For any Finsler (in particular, Riemannian) metric $g$ on $N$, 
consider the unit sphere cotangent bundle 
$S^*_gN:= \{ (q,p) \in T^*N \mid    q \in N, \, p \in T_q^*N, \,  \|p \|_g = 1\}$. 
Then $\lambda_N|_{S_g^*N}$ is a contact form, and its Reeb vector field generates the geodesic flow. 
Periodic Reeb orbits of this contact form correspond to nonconstant closed geodesics of $g$. 
\end{example}

For any pair $(Y,\lambda)$ of a manifold $Y$ and a contact form $\lambda$, 
let $X:= Y \times \R$ 
and $\omega:=d(e^r\lambda) \in \Omega^2(X)$, 
where $r$ denotes the coordinate on $\R$. 
This is called the symplectization of the pair $(Y,\lambda)$. 
Setting $H: X \to \R$ by $H(y,r):=e^r$, we obtain $X_H=R_\lambda$. 
Hence Reeb flows are Hamiltonian systems. 
In particular, one can deduce the $C^1$-closing lemma for Reeb flows 
from the Hamiltonian $C^1$-closing lemma: 

\begin{theorem} 
Let $\lambda$ be a contact form on a closed manifold $Y$. 
For generic $h \in C^2(Y, \lambda)$, 
the union of periodic Reeb orbits of $e^h\lambda$ is dense in $Y$. 
\end{theorem}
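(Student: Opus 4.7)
The plan is to derive this from the Hamiltonian $C^1$-closing lemma (Pugh--Robinson) applied to the symplectization $(Y \times \R, d(e^r\lambda))$, and then to run the same Baire-category argument used in the proof of Corollary~\ref{cor_Hamiltonian_generic_density}.

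First I would establish a \emph{Reeb $C^1$-closing lemma}: for every $p \in Y$ and every $C^2$-neighborhood $\mca{V}$ of $0$ in $C^2(Y)$, there exists $h \in \mca{V}$ such that $e^h\lambda$ has a periodic Reeb orbit through $p$. To do this, form the symplectization $(X,\omega) = (Y \times \R, d(e^r\lambda))$ with proper Hamiltonian $H = e^r$, so that $X_H = R_\lambda$. Apply the Pugh--Robinson theorem at the point $(p,0) \in X$ to obtain $H' \in \mca{H}_{C^2}(X)$, $C^2$-close to $H$, such that $(p,0)$ is periodic under $X_{H'}$. Once $H'$ is close enough in $C^2$ to $e^r$, we have $\del_r H' > 0$ on a neighborhood of $Y \times \{0\}$, and the implicit function theorem produces a $C^2$-small $g \in C^2(Y)$ with $g(p) = 0$ such that the energy level $\Sigma := (H')^{-1}(H'(p,0))$ coincides with the graph $\{(y,g(y)) \mid y \in Y\}$ there. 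The orbit of $X_{H'}$ through $(p,0)$ lies on $\Sigma$ by conservation of energy; pulling $e^r\lambda$ back along $y \mapsto (y,g(y))$ yields $e^{g}\lambda$, so the characteristic foliation on $\Sigma$ is, up to a smooth reparametrization, the Reeb foliation of $e^g\lambda$ on $Y$. Setting $h := g$, we obtain a periodic Reeb orbit of $e^h\lambda$ through $p$, with $h \in \mca{V}$ provided the original neighborhood of $H$ was chosen small enough.

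With this Reeb $C^1$-closing lemma in hand, I would mimic the proof of Corollary~\ref{cor_Hamiltonian_generic_density}. After a further arbitrarily small perturbation of $h$, using that $\Lambda_\nondeg(Y,\xi)$ is residual in $\Lambda(Y,\xi)$, the orbit above may be taken nondegenerate and hence survives $C^2$-small perturbations of $h$. For each non-empty open $U \subset Y$, the set $\mca{C}_U := \{ h \in C^2(Y) \mid e^h\lambda \text{ has a nondegenerate periodic Reeb orbit meeting } U \}$ is then open in $C^2(Y)$, and is dense by the Reeb $C^1$-closing lemma. Choosing a countable base $(U_i)_i$ of open sets in $Y$, the intersection $\bigcap_i \mca{C}_{U_i}$ is residual, and for any $h$ in it the periodic Reeb orbits of $e^h\lambda$ are dense in $Y$.

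The main obstacle is the reduction itself: the perturbation $H'$ supplied by the Hamiltonian $C^1$-closing lemma need not correspond to a Reeb flow for any contact form on $Y$, so one must re-package it as such a Reeb flow by restricting to the nearby energy level $\Sigma$ and identifying $\Sigma$ with $Y$ via the graph projection. This is where $C^2$-closeness of $H'$ to $H$ (rather than merely $C^1$-closeness of the vector fields) is essential: it both guarantees that $\Sigma$ is a $C^2$-graph over $Y$ and forces the conformal factor $h$ to be small in $C^2(Y)$. Everything else is routine.
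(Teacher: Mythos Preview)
Your proposal is correct and follows exactly the route the paper indicates: the paper gives no detailed proof of this theorem, merely observing that the Reeb flow is the Hamiltonian flow of $H=e^r$ on the symplectization and asserting that the statement can therefore be deduced from the Pugh--Robinson Hamiltonian $C^1$-closing lemma. Your argument supplies precisely these omitted details---invoking Pugh--Robinson on the symplectization, reading the perturbed contact form $e^g\lambda$ off the nearby energy hypersurface via the implicit function theorem, and then running the Baire-category argument of Corollary~\ref{cor_Hamiltonian_generic_density}.
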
 

As we explained, there exists a Hamiltonian system on a $4$-dimensional symplectic manifold 
which violates the ``Hamiltonian $C^5$-closing lemma''. 
In contrast, as we explain in the next subsection, 
the strong closing lemma --- which is stronger than the $C^\infty$-closing lemma---
holds for Reeb flows on contact $3$-manifolds. 

\subsection{Reeb flows on contact $3$-manifolds} 

Let us first introduce the notion of strong closing property. 

\begin{definition} 
Let $\lambda$ be a contact form on a closed manifold $Y$. 
We say $\lambda$ satisfies \textit{strong closing property}
if the following holds: 
\begin{quote} 
For any $h \in C^\infty(Y, \R_{\ge 0}) \setminus \{0\}$, there exist
$t \in [0,1]$ and $\gamma \in \mca{P}(Y, e^{th}\lambda)$ such that 
$\image(\gamma) \cap \supp (h) \ne \emptyset$.
\end{quote} 
\end{definition} 

A few remarks on this definition are in order. 
\begin{itemize} 
\item If $\bigcup_{\gamma \in \mca{P}(Y,\lambda)} \image(\gamma)$ is dense in $Y$, 
in particular if all Reeb orbits are periodic, then $\lambda$ satisfies strong closing property. 
\item If $\lambda$ satisfies strong closing property, then $\lambda$ satisfies the $C^\infty$-closing property. 
Namely, for any nonempty open set $U \subset Y$ and a neighborhood $\mca{U}$ of $0 \in C^\infty(Y)$, 
there exist $f \in \mca{U}$ and $\gamma \in \mca{P}(Y, e^f\lambda)$ such that $\image(\gamma) \cap U \ne \emptyset$. 
Indeed, we can apply strong closing property for $h \in C^\infty(Y, \R_{\ge 0}) \setminus \{0\}$ 
such that $\supp(h) \subset U$ and $th \in \mca{U}$ for all $t \in [0,1]$. 
\end{itemize} 

It would be clear from the above argument that strong closing property is much stronger than the $C^\infty$-closing property. 
However, the following result holds true: 

\begin{theorem}[Strong closing lemma for $3$-dimensional Reeb flows \cite{Irie_JMD}]\label{thm_strong_closing_dim3} 
Any contact form $\lambda$ on any closed $3$-manifold $Y$ satisfies strong closing property.
\end{theorem}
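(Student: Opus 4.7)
The plan is to argue by contradiction, using the ECH spectral invariants whose construction will be recalled in Section~5. Assume the strong closing property fails for $\lambda$: there exists $h \in C^\infty(Y, \R_{\ge 0}) \setminus \{0\}$ such that for every $t \in [0,1]$, no $\gamma \in \mca{P}(Y, e^{th}\lambda)$ satisfies $\image(\gamma) \cap \supp(h) \ne \emptyset$. Set $\lambda_t := e^{th}\lambda$. The starting observation is that $\lambda_t \equiv \lambda$ on $Y \setminus \supp(h)$, so $R_{\lambda_t} = R_\lambda$ there; in particular any $\gamma \in \mca{P}(Y,\lambda_t)$ whose image misses $\supp(h)$ is also an element of $\mca{P}(Y,\lambda)$, with the same action $\int_\gamma \lambda = \int_\gamma \lambda_t$.

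To each $\lambda_t$ I would attach a sequence of spectral invariants $\{c_k(\lambda_t)\}_{k \ge 1}$, defined via ECH after a nondegenerate perturbation and extended by $C^0$-continuity in the contact form. The local sensitivity principle (Lemma~\ref{lem_local_sensitivity}) then gives, roughly speaking, that $c_k(\lambda_t)$ is realized by the action of an orbit set whose image avoids the region where the two contact forms in question could differ. Under the contradiction hypothesis, every such orbit lies in the locus where $\lambda_t = \lambda$, forcing $c_k(\lambda_t) = c_k(\lambda_0)$ for all $t \in [0,1]$ and all $k \ge 1$.

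The final step is to invoke the Weyl law for ECH spectral invariants,
\[
\lim_{k \to \infty} \frac{c_k(\lambda_t)^2}{k} = 2 \int_Y \lambda_t \wedge d\lambda_t,
\]
to translate constancy of $c_k(\lambda_t)$ in $t$ into constancy of $\int_Y \lambda_t \wedge d\lambda_t$ in $t$. But a direct computation, in which the cross term drops because $\lambda \wedge dh \wedge \lambda = 0$, gives $\lambda_t \wedge d\lambda_t = e^{2th} \lambda \wedge d\lambda$, so
\[
\int_Y \lambda_t \wedge d\lambda_t \;=\; \int_Y e^{2th}\, \lambda \wedge d\lambda
\]
is strictly increasing in $t$ since $h \ge 0$ is not identically zero and $\lambda \wedge d\lambda$ is a positive volume form on $Y$. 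This contradicts the constancy, proving the theorem.

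The main obstacle is the rigorous execution of the middle step. One must define $c_k(\lambda_t)$ for every $t$, not merely for the residual set of $t$ for which $\lambda_t$ is nondegenerate, which requires establishing $C^0$-continuity of $c_k$ on $\Lambda(Y,\xi_\lambda)$; and one must prove the local sensitivity lemma in a form strong enough to guarantee that, if the entire $1$-parameter family has no periodic orbit meeting $\supp(h)$, then $c_k$ is literally constant along the family. This is the place where the full strength of ECH---or the elementary surrogate outlined in Section~5---is needed, since one has to know that the spectral number is attained by an honest orbit set localized in the region where $\lambda_t$ and $\lambda$ coincide. Once that is granted, the remainder of the proof reduces to the Weyl law plus the elementary volume computation above.
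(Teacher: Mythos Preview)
Your overall architecture---assume failure of strong closing, deduce constancy of ECH spectral invariants along the family $\lambda_t=e^{th}\lambda$, then contradict the Weyl law via the volume computation---is exactly the paper's route. But the middle step, as you present it, contains a genuine gap, and your invocation of Lemma~\ref{lem_local_sensitivity} is misplaced.

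Lemma~\ref{lem_local_sensitivity} says ``local sensitivity $\Rightarrow$ strong closing''; it does not assert that $c_k(\lambda_t)$ is realized by an orbit set avoiding $\supp(h)$. What you actually use is \emph{spectrality}: $c_k(\lambda_t)\in\mca{A}_+(Y,\lambda_t)$. Under the contradiction hypothesis, every periodic Reeb orbit of $\lambda_t$ misses $\supp(h)$, so $\mca{P}(Y,\lambda_t)=\mca{P}(Y,\lambda)$ with identical actions, hence $\mca{A}_+(Y,\lambda_t)=\mca{A}_+(Y,\lambda)$ for all $t$. This yields only $c_k(\lambda_t)\in\mca{A}_+(Y,\lambda)$ for each $t$; it does \emph{not} by itself force $c_k(\lambda_t)=c_k(\lambda_0)$. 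The orbit set realizing the spectral value at time $t$ could a priori jump as $t$ varies---knowing that each value lies in a fixed set of actions is far from knowing the value is constant.

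The missing ingredient, which the paper isolates as Lemma~\ref{lem_nullset} and uses in the proof of Lemma~\ref{lem_local_sensitivity}, is that $\mca{A}_+(Y,\lambda)$ is a \emph{null set} in $\R$ (a Sard-type fact). Combined with $C^0$-continuity of $t\mapsto c_k(\lambda_t)$ (which follows from conformality and monotonicity, not from anything deep about ECH), the image of this continuous map is a connected subset of a Lebesgue-null set, hence a point. That is the actual mechanism forcing constancy. Your last paragraph gestures at the difficulty but locates it in the wrong place: the issue is not that one needs ``the full strength of ECH'' to know the spectral number is attained by an orbit set in the right region---spectrality alone gives that---but rather that one needs the null-set/continuity argument to upgrade ``values lie in a fixed spectrum'' to ``values are constant''.
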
 

As a corollary, we obtain the generic density of periodic orbits. 
The proof is parallel to that of Corollary \ref{cor_Hamiltonian_generic_density}. 

\begin{corollary}[\cite{Irie_JMD}] 
Let $(Y, \lambda)$ be as in Theorem \ref{thm_strong_closing_dim3}. 
For generic $h \in C^\infty(Y)$, 
the union of periodic Reeb orbits of $e^h \lambda$ is dense in $Y$. 
\end{corollary}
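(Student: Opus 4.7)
The plan is to mimic the Baire-category argument of Corollary~\ref{cor_Hamiltonian_generic_density} with Theorem~\ref{thm_strong_closing_dim3} replacing the Hamiltonian $C^1$-closing lemma. Fix a countable base $(U_i)_i$ for the topology of $Y$, and set
\[
\mca{A}_i := \{\, h \in C^\infty(Y) \mid \exists\, \gamma \in \mca{P}(Y, e^h\lambda) \text{ nondegenerate with } \image(\gamma) \cap U_i \ne \emptyset \,\}.
\]
Granting that each $\mca{A}_i$ is open and dense in the $C^\infty$-topology, the Baire-category theorem yields a residual subset $\bigcap_i \mca{A}_i$, and any $h$ in this intersection has the property that the union of periodic Reeb orbits of $e^h \lambda$ meets every $U_i$, and hence is dense in $Y$.

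Openness of $\mca{A}_i$ is routine: a nondegenerate periodic Reeb orbit is locally isolated and, by the implicit function theorem, persists and varies $C^\infty$-continuously under $C^\infty$-small perturbations of the contact form, so if it meets the open set $U_i$ then every nearby orbit of a slightly perturbed form does as well.

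For density, fix $h_0 \in C^\infty(Y)$ and a $C^\infty$-neighborhood $\mca{V}$ of $h_0$. Using residuality of $\Lambda_\nondeg(Y, \xi_\lambda)$, pass first to $\tilde h_0 \in \mca{V}$ with $e^{\tilde h_0} \lambda$ nondegenerate, and pick $\eta \in C^\infty(Y, \R_{\ge 0}) \setminus \{0\}$ with $\supp(\eta) \subset U_i$, scaled small enough that $\tilde h_0 + [0,1] \cdot \eta \subset \mca{V}$. Applying Theorem~\ref{thm_strong_closing_dim3} to the contact form $e^{\tilde h_0} \lambda$ and the nonnegative function $\eta$ produces $t \in [0,1]$ and $\gamma \in \mca{P}(Y, e^{\tilde h_0 + t\eta} \lambda)$ with $\image(\gamma) \cap \supp(\eta) \ne \emptyset$, so $\gamma$ meets $U_i$. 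If $t = 0$, then $\gamma$ is a periodic orbit of the already-nondegenerate form $e^{\tilde h_0} \lambda$ and is automatically nondegenerate, giving $\tilde h_0 \in \mca{V} \cap \mca{A}_i$.

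I expect the main obstacle to be the case $t > 0$, where the orbit $\gamma$ produced by the closing lemma need not itself be nondegenerate. My plan here is a second small perturbation: since the set of $h$ for which all periodic orbits of $e^h \lambda$ of period $\le T_\gamma + 1$ are nondegenerate is open and dense (a Kupka--Smale-type statement in the Reeb setting), one finds $h_2 \in \mca{V}$ inside this set and arbitrarily $C^\infty$-close to $\tilde h_0 + t\eta$, and then uses upper semicontinuity of the space of periodic orbits of uniformly bounded period to locate a nondegenerate orbit of $e^{h_2} \lambda$ near $\gamma$ that still meets $U_i$. Ensuring that the possibly degenerate orbit $\gamma$ does not simply disappear under this second perturbation --- but instead splits into nondegenerate nearby orbits, at least one of which retains the intersection with $U_i$ --- is the delicate bifurcation-analysis point, directly analogous to the corresponding step hidden in the proof of Corollary~\ref{cor_Hamiltonian_generic_density}.
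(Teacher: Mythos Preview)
Your overall strategy --- define $\mca{A}_i$ via nondegenerate orbits meeting $U_i$, show each is open and dense, and intersect over a countable base --- is exactly what the paper intends; it simply declares the proof parallel to that of Corollary~\ref{cor_Hamiltonian_generic_density} and gives no further detail. The subtlety you isolate, namely promoting the orbit $\gamma$ produced by Theorem~\ref{thm_strong_closing_dim3} to a nondegenerate one, is real, and both sketches treat it as standard.

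Your proposed resolution of that step, however, is not the right one. Passing to a generic Kupka--Smale $h_2$ and invoking ``upper semicontinuity of the space of periodic orbits'' uses compactness in the wrong direction: Arzel\`a--Ascoli tells you that a \emph{limit} of orbits of bounded period is an orbit, not that an orbit of the limiting form persists to nearby forms. A degenerate $\gamma$ (for instance one sitting at a saddle-node) can genuinely vanish under an arbitrarily small generic perturbation, exactly as you fear, and no amount of bifurcation analysis will prevent this in general. The standard fix avoids the issue entirely: rather than perturbing globally and hoping $\gamma$ survives, perturb locally so as to \emph{keep} $\gamma$. Choose $f \in C^\infty(Y)$ supported in a small neighborhood of $\image(\gamma)$ with both $f$ and $df$ vanishing along $\image(\gamma)$; then $\gamma$ remains a periodic Reeb orbit of $e^f \cdot e^{\tilde h_0 + t\eta}\lambda$ with the same period, while the transverse Hessian of $f$ can be prescribed so that the linearized return map of the new form along $\gamma$ avoids the eigenvalue $1$. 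Since $f$ can be taken arbitrarily $C^\infty$-small and $\gamma$ already meets $U_i$, this places $\tilde h_0 + t\eta + f$ in $\mca{A}_i \cap \mca{V}$.
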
 

The proof in \cite{Irie_JMD} uses properties of ECH (embedded contact homology) spectral invariants, 
in particular the ``Weyl law'' which was proved by Crisotfaro-Gardiner--Hutchings--Ramos \cite{CGHR}. 
We will explain the key observation in \cite{Irie_JMD} (which is independent of the construction of ECH) in Section 4.1, 
and discuss ECH spectral invariants in Section 5.1. 

\subsection{Closed geodesics on surfaces} 

Since geodesic flows are Reeb flows (Example \ref{ex_geodesic}), 
Theorem \ref{thm_strong_closing_dim3} has the following corollary: 

\begin{corollary}[\cite{Irie_JMD}]\label{cor_geodesics} 
Let $(M,g)$ be a $2$-dimensional closed Riemannian manifold. 
For any $h \in C^\infty(M, \R_{\ge 0}) \setminus \{0\}$, 
there exist $t \in [0,1]$ and a nonconstant closed geodesic $\gamma$ of $e^{th}g$ 
which intersects $\supp(h)$. 
\end{corollary}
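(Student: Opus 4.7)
The plan is to reduce the corollary to Theorem~\ref{thm_strong_closing_dim3} by viewing the geodesic flow on $M$ as a Reeb flow on the unit cotangent bundle (Example~\ref{ex_geodesic}) and translating the conformal rescaling of the metric into a rescaling of the contact form on a \emph{fixed} three-manifold. Since $M$ is a closed surface, $Y:=S^*_g M$ is a closed $3$-manifold, and $\lambda_g:=\lambda_M|_{S^*_g M}$ is a contact form whose Reeb flow is the geodesic flow of $g$. The main computation I would carry out is the following comparison: for a smooth function $f:M\to\R$, the fiber-scaling diffeomorphism
\[
\Phi_f: S^*_g M \to S^*_{e^{2f}g}M,\qquad (q,p)\mapsto (q,e^{f(q)}p),
\]
satisfies $\Phi_f^*\lambda_M = e^{\pr^* f}\lambda_g$, where $\pr:Y\to M$ is the footpoint projection. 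Consequently, the Reeb flow of $e^{\pr^* f}\lambda_g$ on $Y$ is conjugate to the geodesic flow of the metric $e^{2f}g$ on $M$.

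With this correspondence in hand, the argument proceeds as follows. Set $\tilde h := \tfrac{1}{2}\pr^* h \in C^\infty(Y,\R_{\ge 0})\setminus\{0\}$ (nonvanishing because $h\not\equiv 0$). Apply Theorem~\ref{thm_strong_closing_dim3} to the contact form $\lambda_g$ and the function $\tilde h$: there exist $t\in[0,1]$ and a periodic Reeb orbit $\tilde\gamma\in\mca{P}(Y,e^{t\tilde h}\lambda_g)$ whose image meets $\supp(\tilde h)=\pr^{-1}(\supp(h))$. Taking $f:=\tfrac{t}{2}h$ in the comparison above, the contact form $e^{t\tilde h}\lambda_g=e^{\pr^*(th/2)}\lambda_g$ is precisely $\Phi_f^*\lambda_M$, so $\tilde\gamma$ transports under $\Phi_f$ to a periodic Reeb orbit of $\lambda_M|_{S^*_{e^{th}g}M}$.

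Projecting down to $M$ then yields a nonconstant closed geodesic $\gamma:=\pr\circ\tilde\gamma$ of the metric $e^{th}g$. By construction there is a point $(q,p)\in\image(\tilde\gamma)$ with $q\in\supp(h)$, and $\Phi_f$ preserves the base point, so $\gamma$ passes through $q$, i.e.\ it intersects $\supp(h)$. This produces the desired pair $(t,\gamma)$.

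I do not foresee a serious obstacle: the work is entirely in bookkeeping the conformal factor correctly in the passage between metrics and contact forms on the cotangent bundle. The one point to watch is the factor of two arising from the fact that rescaling the cometric by $e^{2f}$ corresponds to rescaling the tautological form by $e^f$; this is why one applies the strong closing property to $\tfrac{1}{2}\pr^*h$ rather than $\pr^*h$, so that the output parameter $t$ lands in $[0,1]$ as required by the statement.
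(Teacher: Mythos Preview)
Your proposal is correct and follows exactly the route the paper indicates: the paper simply remarks that geodesic flows are Reeb flows (Example~\ref{ex_geodesic}) and deduces the corollary from Theorem~\ref{thm_strong_closing_dim3} without further detail. You have supplied those details accurately, including the identification $\Phi_f^*\lambda_M = e^{\pr^* f}\lambda_g$ and the factor-of-two bookkeeping ensuring the parameter lands in $[0,1]$.
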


As a consequence, we obtain generic density of closed geodesics: 

\begin{corollary}\label{cor_geodesics_dense}
Let $(M,g)$ be a $2$-dimensional closed Riemannian manifold. 
For generic $h \in C^\infty(M)$, the union of all nonconstant closed geodesics of $e^h g$ is dense in $M$. 
\end{corollary}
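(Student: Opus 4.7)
The plan is to imitate the proof sketch of Corollary~\ref{cor_Hamiltonian_generic_density} verbatim, with Corollary~\ref{cor_geodesics} playing the role of the Pugh--Robinson closing lemma. Since $M$ is closed and hence second-countable, it admits a countable base $(U_i)_i$ of nonempty open sets. For each nonempty open $U \subset M$, set
\[
\mca{H}_U := \{\, h \in C^\infty(M) \mid \text{some nondegenerate nonconstant closed geodesic of } e^h g \text{ meets } U\,\}.
\]
The goal is to show that each $\mca{H}_{U_i}$ is open and dense in $C^\infty(M)$; then $\bigcap_i \mca{H}_{U_i}$ is residual, and hence dense, in the Baire space $C^\infty(M)$, and any $h$ lying in this intersection has a dense union of nonconstant closed geodesics of $e^h g$.

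Openness of $\mca{H}_U$ is standard: a nondegenerate closed geodesic $\gamma$ of $e^h g$ persists, by the implicit function theorem, as a smoothly varying nondegenerate closed geodesic of $e^{h'}g$ for all $h'$ sufficiently $C^\infty$-close to $h$, and the perturbed orbit stays $C^0$-close to $\gamma$, hence continues to meet the open set $U$. For density, fix $h_0 \in C^\infty(M)$ and a $C^\infty$-neighborhood $\mca{U}$ of $h_0$. I would pick a bump $h \in C^\infty(M, \R_{\ge 0}) \setminus \{0\}$ with $\supp(h) \subset U$, small enough that $h_0 + th \in \mca{U}$ for every $t \in [0,1]$, and apply Corollary~\ref{cor_geodesics} to the metric $e^{h_0}g$ and the function $h$; this produces some $t^* \in [0,1]$ and a nonconstant closed geodesic $\gamma$ of $e^{h_0 + t^* h}g$ meeting $\supp(h) \subset U$. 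A further arbitrarily small conformal perturbation, still inside $\mca{U}$, then yields $h' \in \mca{U}$ together with a nearby \emph{nondegenerate} closed geodesic of $e^{h'}g$ that still intersects $U$.

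The main obstacle I expect is exactly this last nondegeneracy upgrade: one must know that among conformal deformations $g \mapsto e^h g$ of a fixed background metric, generic $h$ render every closed geodesic of $e^h g$ below any prescribed length bound nondegenerate. This is a conformal variant of Abraham's bumpy metric theorem, and should follow from a standard transversality argument --- since $h$ ranges over the infinite-dimensional space $C^\infty(M)$, the family of conformal perturbations is rich enough to push the linearized Poincar\'e return map of any given closed geodesic off the identity --- but checking this in the conformal category is the only technical point requiring genuine input beyond the template of Corollary~\ref{cor_Hamiltonian_generic_density}. The remaining ingredients (the countable base, the openness argument, and the Baire category conclusion) are purely formal.
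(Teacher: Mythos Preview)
Your proposal is correct and follows exactly the template the paper uses (implicitly here, and explicitly in the parallel proof of Corollary~\ref{cor_Hamiltonian_generic_density}); the paper gives no separate argument for this corollary, simply recording it as a consequence of Corollary~\ref{cor_geodesics}. The conformal bumpy-metric perturbation you flag is indeed the only nonformal ingredient, and the paper treats the analogous step with the same brevity in its sketch of Corollary~\ref{cor_Hamiltonian_generic_density}.
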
 

Note that we cannot regard Corollary \ref{cor_geodesics} as a closing lemma for geodesic flows,
because it controls only the projections of periodic orbits of the geodesic flow to $M$.
Rifford \cite{Rifford} proved a closing lemma for geodesic flows under $C^1$-perturbations of Riemannian metrics.
As far as the author knows, it is not known whether the $C^2$-version of this result holds.
We cannot (at least directly) apply the Hamiltonian $C^1$-closing lemma,
since the set of kinetic Hamiltonians of Riemannian metrics is not open in the space of $C^2$-Hamiltonians.

\subsection{Area-preserving maps of surfaces} 

Let us first recall some basic notions about symplectic manifolds of arbitrary dimensions. 
Let $(M, \omega)$ be a closed symplectic manifold. 
$\diff(M, \omega)$ denotes the set of symplectic diffeomorphisms of $(M, \omega)$, 
i.e. diffeomorphisms preserving $\omega$. 

Let $\mca{H}(M)$ denote the set of $H \in C^\infty([0,1] \times M)$ 
such that $\supp (H) \subset (0,1) \times M$. 
For any $H \in \mca{H}(M)$, 
we define the Hamiltonian isotopy $(\ph^t_H)_{t \in [0,1]}$ by 
$\ph^0_H=\id_M$ and $\del_t \ph^t_H = X_{H_t}(\ph^t_H)\,(\forall t \in [0,1])$, 
where $H_t \in C^\infty(M)$ is defined by $H_t(p):= H(t,p)$. 
Each map $\ph^t_H$ preserves $\omega$. 
The elements of 
$\Ham(M, \omega):= \{ \ph^1_H \mid H \in \mca{H}(M) \}$ 
are called Hamiltonian diffeomorphisms of $(M, \omega)$.

For the next definition we need a few more notations. 
For any $H \in \mca{H}(M)$, we abbreviate $\mathrm{pr}_M(\supp(H))$ as $\supp(H)$. 
For any $\psi \in \diff(M,\omega)$, we define $\psi_H:= \psi \circ \ph^1_H \in \diff(M,\omega)$. 
For any $\psi \in \diff(M)$, $\mca{P}(\psi)$ denotes the set of periodic points of $\psi$. 
Finally, let $\mca{H}_+(M):= \mca{H}(M) \cap C^\infty([0,1] \times M, \R_{\ge 0})$. 

\begin{definition} 
$\psi \in \diff(M, \omega)$ satisfies strong closing property 
if for any $H \in \mca{H}_+(M) \setminus \{0\}$ 
there exists $\tau \in [0,1]$ such that 
$\mca{P}(\psi_{\tau H}) \cap \supp(H)  \ne \emptyset$. 
\end{definition} 

For any $\psi \in \diff(M, \omega)$, we consider the mapping torus 
$M_\psi:= [0,1] \times M/ \sim$, 
where $\sim$ is defined by 
$(1,p) \sim (0, \psi(p)) \, (\forall p \in M)$. 
Since $\psi$ preserves $\omega$, 
it is easy to see that 
$(\pr_M)^*\omega \in \Omega^2([0,1] \times M)$ 
descends to a $2$-form on $M_\psi$, 
which we denote by $\omega_\psi$. 
$\psi$ is called \textit{rational} 
if $[\omega_\psi] \in H^2(M_\psi:\R)$ 
is proportional to an element of $H^2(M_\psi:\Z)$. 
For later purposes we introduce a few more notations. 
Let $t$ denote the coordinate on $[0,1]$. 
Then $\del_t \in \mca{X}([0,1]\times M)$ descends to a vector field on $M_\psi$,
which we denote by $R_\psi$. 
An \textit{orbit set} of $\psi$ is a $1$-cycle in $M_\psi$ of the form $\sum_i a_i \gamma_i$, 
where $a_i \in \Z_{>0}$ and $\gamma_i$ is an embedded periodic orbit of $R_\psi$.

Now we can state the strong closing lemma for rational area-presreving maps: 

\begin{theorem}[\cite{CGPZ}, \cite{CGPPZ}, \cite{Edtmair_Hutchings}]\label{thm_closing_lemma_for_surfaces} 
Let $(\Sigma, \omega)$ be a $2$-dimensional closed symplectic manifold. 
If $\psi \in \diff(\Sigma, \omega)$ is rational, then $\psi$ satisfies strong closing property. 
\end{theorem}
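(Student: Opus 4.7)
The plan is to adapt the ECH-based argument for Theorem \ref{thm_strong_closing_dim3} to the mapping-torus setting, replacing ECH with \emph{periodic Floer homology} (PFH)---Hutchings's Floer-theoretic invariant for area-preserving surface maps. Periodic points of $\psi_{\tau H}$ correspond to closed orbits of the canonical vector field $R_{\psi_{\tau H}}$ on the three-manifold $M_{\psi_{\tau H}}$, whose projections to $\Sigma$ are exactly the objects we wish to place inside $\supp(H)$. The rationality hypothesis is precisely what is needed for PFH to be well-defined with a workable action filtration, in parallel with the role of the contact condition in the ECH story.

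First I would introduce PFH spectral invariants: for each rational $\psi$ and each degree $d \in \Z_{\ge 1}$ a real number $c_d(\psi)$ satisfying three properties that mirror those of the ECH spectral invariants discussed in Section 5:
\begin{itemize}
\item \emph{Spectrality}: $c_d(\psi)$ is the total action of some orbit set of $\psi$ of degree $d$.
\item \emph{Hofer continuity}: $c_d$ depends continuously (in fact, Lipschitz-continuously in the Hofer norm) on $\psi$.
\item \emph{Weyl-law-type Hamiltonian variation formula}: for $H \in \mca{H}(\Sigma)$,
\[
c_d(\psi_H) - c_d(\psi) \;=\; d \int_0^1\!\!\int_\Sigma H_t\,\omega\,dt \;+\; o(d) \qquad (d \to \infty),
\]
with the lower-order term bounded uniformly on Hofer-bounded families.
\end{itemize}
The invariants and spectrality are due to Hutchings; the Weyl law and the uniform variation formula are the deep inputs established in \cite{CGPPZ}.

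Second, I would run the local sensitivity argument (Lemma \ref{lem_local_sensitivity}) on the one-parameter family $\tau \mapsto \psi_{\tau H}$, $\tau \in [0,1]$. Suppose for contradiction that some $H \in \mca{H}_+(\Sigma) \setminus \{0\}$ violates strong closing property, so that for every $\tau \in [0,1]$ no periodic point of $\psi_{\tau H}$ meets $\supp(H)$. Then every orbit set of $\psi_{\tau H}$ is disjoint from $\supp(H)$; since $\psi_{\tau H}$ agrees with $\psi$ away from $\supp(H)$, such orbit sets coincide with orbit sets of $\psi$ and carry the same action independently of $\tau$. By spectrality, $\tau \mapsto c_d(\psi_{\tau H})$ takes values in a $\tau$-independent discrete subset of $\R$; combined with Hofer continuity it is therefore constant on $[0,1]$. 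Substituting into the variation formula gives
\[
0 \;=\; c_d(\psi_H) - c_d(\psi) \;=\; d \int_0^1\!\!\int_\Sigma H_t\,\omega\,dt \;+\; o(d),
\]
a contradiction for $d$ sufficiently large, since the leading term is strictly positive.

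The main obstacle is the Weyl law for PFH together with the uniform Hamiltonian variation formula: these are the analytic core of \cite{CGPPZ} and are what genuinely separate this result from the $C^\infty$-closing lemma. Once they are in hand, the reduction to orbit sets on the mapping torus and the local sensitivity step are formal analogues of the Reeb-flow arguments from Sections 4--5. The rationality hypothesis is used exactly to make PFH, its spectral invariants, and the Weyl law well-defined in the generality considered here.
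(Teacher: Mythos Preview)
Your outline is correct and matches the paper's own strategy: reduce to a local-sensitivity statement via Lemma~\ref{lem_local_sensitivity_diffeo}, then invoke the PFH Weyl law to force $c_d(\psi_H)\ne c_d(\psi)$ for large $d$. Two small corrections: the Weyl law you need is proved in \cite{CGPZ} and \cite{Edtmair_Hutchings} (the paper \cite{CGPPZ} supplies the $U$-cycle property that removes an extra hypothesis in the latter), and the action spectrum is in general only a closed null set rather than discrete---this still suffices for the constancy step, exactly as in the proof of Lemma~\ref{lem_local_sensitivity}.
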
 

This theorem was independently proved by Cristofaro-Gardiner--Prasad--Zhang \cite{CGPZ} and by 
Edtmair--Hutchings \cite{Edtmair_Hutchings} (combined with the result by Cristofaro-Gardiner--Pomerleano--Prasad--Zhang \cite{CGPPZ}), 
although the theorem as stated above is implicit in both papers. 
\cite{Edtmair_Hutchings} proved the theorem under an additional assumption that $\psi$ satisfies a property called \textit{$U$-cycle property}, 
and \cite{CGPPZ} proved that all rational area-preserving maps satisfy this property, thus it turned out that the assumption is redundant. 
The proof in \cite{CGPZ} requires that $\psi$ is nondegenerate, however this assumption can be easily dropped. 

Both proofs use spectral invariants of PFH (periodic Floer homology), which is an analogue of ECH for (mapping tori of) area-preserving surface maps. 
For Hamiltonian diffeomorphisms of surfaces,  Edtmair \cite{Edtmair_elementary} gave another proof using elementary alternative of PFH spectral invariants. 
The assumption that $\psi$ is rational cannot be dropped. 
For example, a Diophantine rotation on $T^2$ fails to satisfy the $C^\infty$-closing property (Herman \cite{Herman_torus}). 
This rationality assumption is needed to ensure that PFH spectral invariants take values in a fixed closed null set (see Section 4.2). 
As in the case of Reeb flows, Theorem \ref{thm_closing_lemma_for_surfaces} implies the generic density of periodic orbits: 

\begin{corollary}\label{cor_generic_density_surface} 
Let $(\Sigma, \omega)$ be a $2$-dimensional closed symplectic manifold. 
Then, $\mca{P}(\psi)$ is dense in $\Sigma$ for 
(i) $C^\infty$-generic $\psi \in \Ham(\Sigma, \omega)$ and 
(ii) $C^\infty$-generic $\psi \in \diff(\Sigma, \omega)$. 
\end{corollary}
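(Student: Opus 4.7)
The plan is to mirror the argument used for Corollary~\ref{cor_Hamiltonian_generic_density}. Fix a countable basis $\{U_i\}_{i \in \Z_{\ge 1}}$ of open sets in $\Sigma$ and set
\[
\mca{D}_i := \{\, \psi \in \diff(\Sigma, \omega) \mid \psi \text{ has a nondegenerate periodic point in } U_i \,\},
\]
together with $\mca{D}_i^{\Ham} := \mca{D}_i \cap \Ham(\Sigma, \omega)$. An implicit function argument applied to the fixed-point equation $\psi^k(x)=x$ shows that nondegenerate periodic points persist under $C^\infty$-small perturbations, so each $\mca{D}_i$ and $\mca{D}_i^{\Ham}$ is open. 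Granted density of each set, the residual intersections $\bigcap_i \mca{D}_i$ and $\bigcap_i \mca{D}_i^{\Ham}$ consist of diffeomorphisms whose periodic points are dense in $\Sigma$, which is the conclusion of the corollary.

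To prove density, fix $\psi_0$ and a $C^\infty$-neighborhood $\mca{U}$. The strategy has three steps. First, approximate $\psi_0$ in $\mca{U}$ by a rational symplectomorphism $\psi_1$. Second, choose $H \in \mca{H}_+(\Sigma) \setminus \{0\}$ with $\supp(H) \subset (0,1) \times U_i$ and so small in $C^\infty$ that $(\psi_1)_{sH} \in \mca{U}$ for every $s \in [0,1]$, and invoke Theorem~\ref{thm_closing_lemma_for_surfaces} to extract $\tau \in [0,1]$ and a periodic point $p \in U_i$ of $\psi_2 := (\psi_1)_{\tau H}$. Third, compose $\psi_2$ with the time-one map of a Hamiltonian $h$ supported in a small Darboux ball around $p$ disjoint from the other points of the $\psi_2$-orbit of $p$, chosen so that the resulting linearized return map at $p$ has no eigenvalue equal to $1$; this leaves $p$ periodic, makes it nondegenerate, and keeps the result in $\mca{U}$. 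Since every perturbation in the last two steps is Hamiltonian, the same recipe handles case~(i) as soon as the first step can be performed inside $\Ham(\Sigma,\omega)$.

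Step one is where the two cases diverge. In case~(i) it is automatic: a Hamiltonian isotopy from $\id_\Sigma$ to $\psi_0$ identifies $M_{\psi_0}$ with $S^1 \times \Sigma$, and under this identification $[\omega_{\psi_0}]$ coincides with the pullback of $[\omega] \in H^2(\Sigma; \R)$ along the projection to $\Sigma$. Since $H^2(\Sigma; \R)$ is one-dimensional with an integer generator, $[\omega_{\psi_0}]$ is already proportional to an integer class in $H^2(M_{\psi_0}; \R)$, so $\psi_1 := \psi_0$ works. In case~(ii), small symplectic isotopies of $\psi_0$ trace out, via the flux homomorphism, an affine subspace of $H^2(M_{\psi_0}; \R)$ containing $[\omega_{\psi_0}]$; rational classes are dense in this subspace, so a rational $\psi_1 \in \mca{U}$ exists.

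I expect step one in case~(ii) to be the main conceptual obstacle: one must verify that the flux can indeed be adjusted to make $[\omega_\psi]$ proportional to an integer class, and that the natural identifications between $H^2(M_\psi; \R)$ for $\psi$ near $\psi_0$ are canonical enough to make sense of ``small $C^\infty$-perturbations of $\psi$ yield small perturbations of $[\omega_\psi]$''. The other two steps are essentially formal once Theorem~\ref{thm_closing_lemma_for_surfaces} is granted.
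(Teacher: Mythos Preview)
Your proposal is correct and follows essentially the same approach as the paper: the paper's two-sentence proof simply records that every Hamiltonian diffeomorphism is rational (your case~(i) computation) and that rational maps are $C^\infty$-dense in $\diff(\Sigma,\omega)$ (your flux argument in case~(ii), for which the paper cites \cite{CGPZ} Lemma~5.4), leaving the passage from strong closing to generic density implicit by analogy with Corollary~\ref{cor_Hamiltonian_generic_density}. You have spelled out that implicit passage, including the nondegeneracy perturbation, but the skeleton is the same.
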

\begin{proof} 
(i) follows because any Hamiltonian diffeomorphism of $(\Sigma, \omega)$ is rational. 
(ii) follows because the set of rational area-preserving maps is $C^\infty$-dense in $\diff(\Sigma, \omega)$ (see \cite{CGPZ} Lemma 5.4). 
\end{proof} 

Corollary \ref{cor_generic_density_surface} (i) was already proved in \cite{Asaoka_Irie}
by reducing it to Theorem \ref{thm_strong_closing_dim3}, 
but the strong closing property was not established in that paper.

\section{Spectral invariants and strong closing lemmas} 

We introduce the notion of spectral invariants 
and explain the key observation (Lemma \ref{lem_local_sensitivity}) to prove strong closing lemmas. 
We mainly discuss the case of Reeb flows (Section 4.1), 
and briefly mention the case of (rational) symplectic diffeomorphisms (Section 4.2). 

\subsection{Reeb flows on contact manifolds} 

Let $(Y,\xi)$ be a closed contact manifold of dimension $2n-1$. 
For any $\lambda \in \Lambda(Y,\xi)$, we consider the following sets of real numbers: 
\begin{align*} 
\mca{A}(Y,\lambda)&:= \{ T_\gamma \mid \gamma \in \mca{P}(Y,\lambda) \}, \\ 
\mca{A}_+(Y,\lambda)&: =\{0\} \cup  \{ a_1+ \cdots + a_m \mid m \ge 1, \, a_1, \ldots, a_m \in \mca{A}(Y,\lambda)\}. 
\end{align*} 
 We call the set $\mca{A}_+(Y,\lambda)$ generalized action spectrum of $(Y,\lambda)$. 
It is easy to see that $\mca{A}(Y,\lambda)$ is a closed subset of $\R_{>0}$ and $\min \mca{A}(Y,\lambda)>0$,
thus $\mca{A}_+(Y, \lambda)$ is a closed subset of $\R_{\ge 0}$. 
Moreover, the following fact plays a key role in our argument. 
The proof is an elementary application of  (finite-dimensional) Sard's theorem (see \cite{Irie_JMD} Lemma 2.2). 

\begin{lemma}\label{lem_nullset} 
$\mca{A}_+(Y,\lambda)$ is a null set (i.e. Lebesgue measure zero set). 
\end{lemma}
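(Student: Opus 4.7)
The plan is to reduce the statement to a finite-dimensional Sard argument applied to a product evaluation map. Writing
\[
\mca{A}_+(Y, \lambda) \;=\; \{0\} \;\cup\; \bigcup_{m \ge 1} S_m, \qquad S_m := \{t_1 + \cdots + t_m : t_i \in \mca{A}(Y, \lambda)\},
\]
and using that a countable union of null sets is null, it suffices to show each $S_m \subset \R$ is null.

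For fixed $m$, I would consider the smooth evaluation map
\[
\Phi_m : Y^m \times (0, \infty)^m \longrightarrow Y^{2m}, \qquad \bigl((p_i), (t_i)\bigr) \longmapsto \bigl((p_i, \ph^{t_i}_{R_\lambda}(p_i))\bigr)_{i=1}^m,
\]
together with the product diagonal $\Delta^m := (\Delta_Y)^m \subset Y^{2m}$. The preimage $\widetilde{\mca{P}}_m := \Phi_m^{-1}(\Delta^m)$ consists of tuples $((p_i), (t_i))$ for which each $p_i$ is a period-$t_i$ point of the Reeb flow, and $S_m = \sigma(\widetilde{\mca{P}}_m)$ for the addition map $\sigma((p_i), (t_i)) := \sum_i t_i$. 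A direct computation shows that $\Phi_m$ is transverse to $\Delta^m$ at a point of $\widetilde{\mca{P}}_m$ precisely when each associated periodic orbit is nondegenerate, the obstruction being the invertibility of $d\ph^{t_i}|_{\xi_{p_i}} - \id$. When this holds, $\widetilde{\mca{P}}_m$ is a smooth submanifold of dimension $2nm - (2n-1)m = m$, and it carries a free $\R^m$-action given by translating each $p_i$ along its Reeb orbit; the map $\sigma$ is constant along orbits of this action. Modding out reduces the effective dimension to zero, so $\sigma(\widetilde{\mca{P}}_m)$ is countable and hence null. In particular, the lemma is immediate for $\lambda \in \Lambda_\nondeg(Y, \xi)$.

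The main obstacle is the genuinely degenerate case, where $\widetilde{\mca{P}}_m$ can fail to be a submanifold. Here the key additional input is an action-rigidity property: along any smooth $1$-parameter family $\{\gamma_s\}$ of closed Reeb orbits, the action $\int_{\gamma_s}\lambda = T_{\gamma_s}$ is constant. This is a Stokes-theorem computation on the cylinder swept by the family, using $i_{R_\lambda}d\lambda \equiv 0$ to conclude that the pullback of $d\lambda$ to the cylinder vanishes. Consequently, even when $\widetilde{\mca{P}}_m$ has higher-dimensional components, $\sigma$ is constant on each smooth family inside them. Combining this rigidity with a parametric transversality argument --- passing to the family $\lambda_s := e^{sh}\lambda$ for generic $h \in C^\infty(Y)$ so that the enlarged evaluation map becomes transverse to $\Delta^m$ --- yields a presentation of $\widetilde{\mca{P}}_m$ as a countable union of smooth pieces on each of which $\sigma$ takes only a single value, so that $S_m = \sigma(\widetilde{\mca{P}}_m)$ is null.
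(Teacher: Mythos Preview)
Your setup and the nondegenerate case are fine (the $\R^m$-action is only locally free, not free, but the conclusion that $\sigma(\widetilde{\mca{P}}_m)$ is countable still holds since the orbits are $m$-dimensional). The action-rigidity observation is also correct and is the essential geometric input beyond Sard.

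The gap is in how you handle the degenerate case. Achieving transversality for the enlarged evaluation map over the family $\lambda_s = e^{sh}\lambda$ makes the total space $\bigcup_s \widetilde{\mca{P}}_m(\lambda_s)$ a smooth $(m{+}1)$-manifold, but this gives no control over the individual slice at $s=0$: the intersection of a smooth manifold with a hypersurface to which it is not transverse can be an arbitrary closed set, and in particular need not decompose into countably many smooth pieces. At best your argument shows $S_m(\lambda_s)$ is null for \emph{generic} $s$ --- which just recovers the nondegenerate case. The direct Sard approach (cf.\ \cite{Irie_JMD} Lemma~2.2) instead works with the given $\lambda$. On a local Poincar\'e section $D$ with return map $\psi$ and return time $\tau$, your rigidity computation becomes the identity $d\tau = \psi^*(\lambda|_D) - \lambda|_D$, so $d\tau_q$ annihilates $\ker(d\psi_q - \id)$ at every fixed point $q$. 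Writing $\Xi(q):=\psi(q)-q$ in local coordinates, this is precisely the Lagrange-multiplier condition $d\tau_q\in\mathrm{span}\{d\Xi^j_q\}$, hence every period $\tau(q)$ is a critical value of the smooth function $L(q,\mu)=\tau(q)-\langle\mu,\Xi(q)\rangle$ on $D\times(\R^{2n-2})^*$. Sard applied to $L$ (and to $m$-fold sums of such functions over a countable cover by sections) gives $S_m$ null for every $\lambda$, with no genericity needed.
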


Let us introduce the notion of spectral invariants in an axiomatic way. 

\begin{definition}\label{defn_action_selector} 
Let $(Y,\xi)$ be a closed contact manifold. 
A spectral invariant of $(Y,\xi)$ is a map 
$s: \Lambda(Y, \xi) \to \R_{\ge 0}$
such that the following properties hold for any $\lambda \in \Lambda(Y, \xi)$: 
\begin{description} 
\item[(Spectrality)] $s(\lambda) \in \mca{A}_+(Y, \lambda)$. 
\item[(Conformality)] $s(a\lambda) = a \cdot s(\lambda)$ for any $a \in \R_{>0}$. 
\item[(Monotonicity)] $s(e^h\lambda) \ge  s(\lambda)$ for any $h \in C^\infty(Y, \R_{\ge 0})$. 
\item[($C^0$-continuity)] For any $\ep \in \R_{>0}$ there exists $\delta \in \R_{>0}$ such that 
$|s(\lambda) - s(e^h\lambda)| \le \ep$ for any $h \in C^\infty(Y, [-\delta, \delta])$. 
\end{description} 
\end{definition} 

A few remarks on this definition are in order: 
\begin{itemize} 
\item  $C^0$-continuity easily follows from conformality and monotonicity.
\item If $s: \Lambda_\nondeg(Y, \xi) \to \R_{\ge 0}$ satisfies the above properties, 
then it uniquely extends to a spectral invariant $\bar{s}: \Lambda(Y,\xi) \to \R_{\ge 0}$. 
\end{itemize}

The term spectral invariant (also known as action selector) does not denote a single invariant, 
but rather refers to a class of invariants 
that 
assign a critical value of a Hamiltonian action functional 
to 
an object in symplectic geometry, 
in a way several appropriate conditions are satisfied. 
Spectral invariants can also be defined for Hamiltonian or symplectic diffeomorphisms, as well as for pairs of Lagrangian submanifolds. 
Spectral invariants are a key concept in modern symplectic geometry, 
and are frequently used to establish various symplectic rigidity phenomena. 
\cite{HZ} is an excellent introductory text for this circle of ideas. 

Now we are ready to explain our key observation: Lemma \ref{lem_local_sensitivity}. 
We say $\lambda \in \Lambda(Y, \xi)$ satisfies \textit{local sensitivity} 
if for any $h \in C^\infty(Y, \R_{\ge 0}) \setminus \{0\}$ 
there exists a spectral invariant $s$ such that $s(e^h\lambda)>s(\lambda)$. 

\begin{lemma}\label{lem_local_sensitivity} 
If $\lambda \in \Lambda(Y, \xi)$ satisfies the local sensitivity then it satisfies the strong closing property. 
\end{lemma}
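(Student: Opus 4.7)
The plan is to argue by contraposition: assuming the strong closing property fails for some $h \in C^\infty(Y, \R_{\ge 0}) \setminus \{0\}$, I will show that \emph{every} spectral invariant $s$ satisfies $s(e^h\lambda) = s(\lambda)$, directly contradicting local sensitivity. So suppose that for every $t \in [0,1]$ and every $\gamma \in \mca{P}(Y, e^{th}\lambda)$ we have $\image(\gamma) \cap \supp(h) = \emptyset$.

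The first ingredient is a geometric observation about the Reeb dynamics away from $\supp(h)$. On the open set $Y \setminus \supp(h)$ one has $h = 0$ and $dh = 0$, so $e^{th}\lambda = \lambda$ and $d(e^{th}\lambda) = d\lambda$ throughout this set; consequently the Reeb equations $\alpha(R) \equiv 1$, $i_R\, d\alpha \equiv 0$ for $\alpha = e^{th}\lambda$ and $\alpha = \lambda$ coincide there, giving $R_{e^{th}\lambda} = R_\lambda$ on $Y \setminus \supp(h)$. Under the contradiction hypothesis, every periodic Reeb orbit of $e^{th}\lambda$ lies in $Y \setminus \supp(h)$ and so is a periodic Reeb orbit of $\lambda$ with the same period. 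Hence $\mca{A}(Y, e^{th}\lambda) \subset \mca{A}(Y, \lambda)$, and therefore $\mca{A}_+(Y, e^{th}\lambda) \subset \mca{A}_+(Y, \lambda)$ for every $t \in [0,1]$.

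Now fix any spectral invariant $s$ and set $f(t) := s(e^{th}\lambda)$. Spectrality places $f(t) \in \mca{A}_+(Y, e^{th}\lambda) \subset \mca{A}_+(Y, \lambda)$, and by Lemma \ref{lem_nullset} the right-hand set is a null subset of $\R_{\ge 0}$. Monotonicity combined with $h \ge 0$ forces $f$ to be non-decreasing, and $C^0$-continuity of $s$ (applied to $e^{t'h}\lambda = e^{(t'-t)h} \cdot e^{th}\lambda$, whose conformal factor $(t'-t)h$ has arbitrarily small $C^0$-norm as $t' \to t$) renders $f$ continuous on $[0,1]$. A continuous non-decreasing function has connected image $[f(0), f(1)]$; since this interval is contained in a null set, it collapses to a point, i.e.\ $f$ is constant. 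In particular $s(e^h\lambda) = f(1) = f(0) = s(\lambda)$, and as $s$ was arbitrary, local sensitivity is contradicted.

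The proof is short because the axioms in Definition \ref{defn_action_selector} are tailored exactly to this ``continuous monotone path landing in a null set'' mechanism; there is no single hard step, but if anything requires care it is the verification of continuity in $t$ from the axiomatic $C^0$-continuity, which hinges on the uniform smallness of $(t'-t)h$ in $C^0$. The genuine difficulty in the story lies not in this lemma but in \emph{constructing} spectral invariants with enough quantitative control (via ECH or its elementary surrogate) to verify local sensitivity in the first place, which is where the Weyl law enters later in the paper.
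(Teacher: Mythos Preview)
Your proof is correct and follows essentially the same contrapositive route as the paper: freeze the action spectrum using the fact that all periodic orbits of $e^{th}\lambda$ miss $\supp(h)$, then use spectrality plus continuity plus Lemma~\ref{lem_nullset} to conclude $t\mapsto s(e^{th}\lambda)$ is constant. The only cosmetic differences are that the paper asserts the equality $\mca{A}_+(Y,e^{th}\lambda)=\mca{A}_+(Y,\lambda)$ (using the hypothesis at $t=0$ for the reverse inclusion) whereas you only need and prove the inclusion, and that your appeal to monotonicity for non-decreasingness is superfluous since continuity alone already forces the connected image to be a point.
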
 
\begin{proof} 
Suppose that $\lambda$ does not satisfy strong closing property. 
Then there exists $h \in C^\infty(Y, \R_{\ge 0}) \setminus \{0\}$ such that the following holds: 
\begin{equation}\label{eqn_local_sensitivity} 
t \in [0,1],\,  \gamma \in \mca{P}(Y, e^{th} \lambda) \implies \image(\gamma) \cap \supp(h) = \emptyset. 
\end{equation}
Since $e^{th}\lambda$ coincides $\lambda$ on $Y \setminus \supp(h)$, 
(\ref{eqn_local_sensitivity}) implies 
$\mca{P}(Y, e^{th}\lambda) = \mca{P}(Y, \lambda)$, 
thus $\mca{A}_+(Y, e^{th}\lambda) = \mca{A}_+(Y, \lambda)$ for any $t \in [0,1]$. 
For any spectral invariant $s$, the spectrality implies that 
$s(e^{th}\lambda) \in \mca{A}_+(Y, e^{th}\lambda) = \mca{A}_+(Y,\lambda)$. 
By the continuity of spectral invariants, 
the function $s(e^{th}\lambda)$ is continuous on $t$. 
On the other hand $\mca{A}_+(Y,\lambda)$ is a null set (Lemma \ref{lem_nullset}), 
thus $s(e^{th}\lambda)$ is constant on $t$.
In particular 
$s(\lambda) = s(e^h\lambda)$ for any spectral invariant $s$, 
thus $\lambda$ does not satisfy the local sensitivity. 
\end{proof} 

By Lemma \ref{lem_local_sensitivity}, 
Theorem \ref{thm_strong_closing_dim3} reduces to the following result: 

\begin{theorem}\label{thm_local_sensitivity_dim3} 
Any contact form on any closed contact $3$-manifold satisfies the local sensitivity. 
\end{theorem}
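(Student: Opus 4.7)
The plan is to exhibit an infinite family of spectral invariants whose asymptotic behavior is controlled by the contact volume, and then exploit that this volume strictly increases under the perturbation $\lambda \mapsto e^h \lambda$. Concretely, I would use the ECH spectral invariants $c_k : \Lambda(Y,\xi) \to \R_{\ge 0}$ indexed by $k \in \Z_{\ge 0}$, constructed by Hutchings from embedded contact homology. These will be reviewed in detail in Section~5.1; here I take as black-box input that each $c_k$ is a spectral invariant in the sense of Definition~\ref{defn_action_selector} (spectrality, conformality, monotonicity, $C^0$-continuity).

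The key analytic input is the \emph{Weyl law} of Cristofaro-Gardiner--Hutchings--Ramos \cite{CGHR}: for every $\lambda \in \Lambda(Y,\xi)$,
\[
\lim_{k \to \infty} \frac{c_k(\lambda)^2}{k} \;=\; 2 \vol(Y, \lambda),
\]
where $\vol(Y, \lambda) := \int_Y \lambda \wedge d\lambda$. A one-line computation using $\lambda \wedge \lambda = 0$ gives $(e^h\lambda) \wedge d(e^h \lambda) = e^{2h}\, \lambda \wedge d\lambda$, so for any $h \in C^\infty(Y, \R_{\ge 0}) \setminus \{0\}$ one has $\vol(Y, e^h \lambda) > \vol(Y, \lambda)$, because $\lambda \wedge d\lambda$ is a nowhere-vanishing volume form and $e^{2h} > 1$ on a nonempty open set.

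Combining these two facts,
\[
\lim_{k \to \infty} \frac{c_k(e^h\lambda)^2}{k} \;=\; 2 \vol(Y, e^h \lambda) \;>\; 2 \vol(Y, \lambda) \;=\; \lim_{k \to \infty} \frac{c_k(\lambda)^2}{k},
\]
so $c_k(e^h\lambda) > c_k(\lambda)$ for all sufficiently large $k$. Any such $c_k$ then witnesses local sensitivity of $\lambda$, and Lemma~\ref{lem_local_sensitivity} delivers strong closing.

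The main obstacle is not the assembly above --- which is essentially a single asymptotic comparison --- but rather the two deep external inputs feeding into it: first, the construction of the spectral-invariant-valued family $(c_k)$ with the axioms of Definition~\ref{defn_action_selector}, which requires the full machinery of ECH together with Taubes' identification with Seiberg--Witten Floer cohomology; and second, the Weyl law \cite{CGHR} itself. Both will be treated in Section~5. An attractive feature of this reduction is that it uses no fine information about individual $c_k$'s --- only their asymptotic relationship to $\vol$ --- so any alternative family of spectral invariants obeying a comparable Weyl law would yield the same theorem, which is relevant to the discussion of Hutchings' elementary construction in Section~5.4.
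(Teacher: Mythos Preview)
Your proposal is correct and follows essentially the same route as the paper: use the ECH spectral invariants, invoke the Cristofaro-Gardiner--Hutchings--Ramos Weyl law to compare asymptotics against $\vol(Y,\lambda)$, observe that $(e^h\lambda)\wedge d(e^h\lambda)=e^{2h}\lambda\wedge d\lambda$ forces a strict volume increase, and conclude that $s_{\sigma_k}(e^h\lambda)>s_{\sigma_k}(\lambda)$ for all large $k$. Your closing remark that any spectral-invariant family obeying a comparable Weyl law would suffice is also in line with the paper's discussion of the elementary spectral invariants in Section~5.3.
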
 

As we explain in Section 5, 
Theorem \ref{thm_local_sensitivity_dim3} 
is a consequence of the ``Weyl law'' 
of ECH spectral invariants. 
We can also use an elementary alternative of ECH spectral invariants, 
which gives a better understanding on why 
Theorem \ref{thm_local_sensitivity_dim3} is a special property in dimension $3$.

\subsection{Rational symplectic diffeomorphisms} 

It is not difficult to state a result similar to Lemma \ref{lem_local_sensitivity} 
for rational symplectic diffeomorphisms. 
Let $(M,\omega)$ be a closed symplectic manifold, 
and $\psi \in \diff(M, \omega)$ be a rational symplectic diffeomorphism. 
For any $H \in \mca{H}(M)$, 
we define a diffeomorphism
\[ 
f_H: M_\psi \to M_{\psi_H}; \,  [ (t,p)] \mapsto [(t, (\ph^t_H)^{-1}(p)] \qquad(t \in [0,1] , \, p \in M). 
\] 
For any $\sigma \in H_1(M_\psi:\Z)$, 
let us take a $\Z$-coefficient cycle $\gamma_\sigma$ in $M_\psi$ 
representing $\sigma$, which we call a reference cycle. 
Then for any $H \in \mca{H}(M)$, 
let $\mca{Z}_\sigma(H)$ denote the set of pairs $(Z,\alpha)$, 
where $\alpha$ is an orbit set of $\psi_H$, 
and $Z$ is a $\Z$-coefficient $2$-chain in $M_\psi$ 
such that $\del Z  = \alpha - (f_H)_*(\gamma_\sigma)$. 
Let us define $\mca{A}_\sigma(H) \subset \R$ by 
\[ 
\mca{A}_\sigma(H): = \bigg\{ \int_Z \omega_\psi \biggm{|}  (Z,\alpha) \in \mca{Z}_\sigma(H)  \bigg\}. 
\] 
The assumption that $\psi$ is rational implies that $\mca{A}_\sigma(H)$ is a closed subset of $\R$. 
Also, an argument similar to the proof of Lemma \ref{lem_nullset} shows that $\mca{A}_\sigma(H)$ is a nullset. 

Let us define the notion of spectral invariants of $\psi$. 
A map 
\[ 
s: H_1(M_\psi: \Z) \times \mca{H}(M) \to \R; \, (\sigma, H) \mapsto s_\sigma(H) 
\] 
is called a spectral invariant if the following properties are satisfied: 
\begin{description} 
\item[(Spectrality)]$s_\sigma(H) \in \mca{A}_\sigma(H)$ for any $\sigma \in H_1(M_\psi:\Z)$ and $H \in \mca{H}(M)$. 
\item[(Continuity)]  For each $\sigma \in H_1(M_\psi:\Z)$, the map $s_\sigma$ is continuous with respect to the $C^0$-norm on $\mca{H}(M)$. 
\end{description} 

Now we can state an analogue of Lemma \ref{lem_local_sensitivity}. 

\begin{lemma}\label{lem_local_sensitivity_diffeo} 
Suppose that for any $H \in \mca{H}_+(M) \setminus \{0\}$ 
there exist a spectral invariant $s$ and $\sigma \in H_1(M_\psi: \Z)$ 
such that $s_\sigma(H) \ne s_\sigma(0)$. 
Then $\psi$ satisfies the strong closing property. 
\end{lemma}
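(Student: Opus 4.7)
The plan is to mirror the proof of Lemma~\ref{lem_local_sensitivity}, with the Reeb-period spectrum replaced by $\mca{A}_\sigma$ and the contact deformation $e^{th}\lambda$ replaced by the family $\tau H$. Assume for contradiction that $\psi$ fails the strong closing property; then there exists $H \in \mca{H}_+(M) \setminus \{0\}$ with
\[
\mca{P}(\psi_{\tau H}) \cap \supp(H) = \emptyset \qquad (\forall\, \tau \in [0,1]).
\]

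The core step is to show that under this assumption $\mca{A}_\sigma(\tau H)$ is independent of $\tau \in [0,1]$ and is contained in $\mca{A}_\sigma(0)$. Since $\supp(H) \subset (0,1) \times M$, the Hamiltonian $\tau H$ vanishes on a neighborhood of $[0,1] \times (M \setminus \supp(H))$, so $\ph^t_{\tau H}$ fixes $M \setminus \supp(H)$ pointwise for all $t, \tau$; in particular $\psi_{\tau H}$ coincides with $\psi$ on $M \setminus \supp(H)$. A short induction on iterates, combined with the non-intersection assumption, then shows that every periodic point of $\psi_{\tau H}$ is a periodic point of $\psi$ whose entire $\psi$-orbit is disjoint from $\supp(H)$, and conversely. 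Translated to the mapping-torus picture, $f_{\tau H}: M_\psi \to M_{\psi_{\tau H}}$ restricts to the identity over $M \setminus \supp(H)$, and every orbit set of $\psi_{\tau H}$ lies in this region. Since $f_{\tau H}^* \omega_{\psi_{\tau H}} = \omega_\psi$ (immediate from the symplecticity of $\ph^t_{\tau H}$), pulling each $(Z,\alpha) \in \mca{Z}_\sigma(\tau H)$ back by $f_{\tau H}^{-1}$ gives the $\tau$-independent description
\[
\mca{A}_\sigma(\tau H) = \bigg\{ \int_{Z'} \omega_\psi \biggm{|} \alpha' \text{ orbit set of } \psi \text{ avoiding } \supp(H),\ \del Z' = \alpha' - \gamma_\sigma \bigg\} \subseteq \mca{A}_\sigma(0).
\]

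With this in hand the argument closes as in Lemma~\ref{lem_local_sensitivity}. For any spectral invariant $s$ and any $\sigma \in H_1(M_\psi:\Z)$, spectrality gives $s_\sigma(\tau H) \in \mca{A}_\sigma(\tau H) \subseteq \mca{A}_\sigma(0)$; the continuity of $s_\sigma$ with respect to the $C^0$-norm on $\mca{H}(M)$, together with the $C^0$-continuity of $\tau \mapsto \tau H$, makes $\tau \mapsto s_\sigma(\tau H)$ continuous on $[0,1]$; and the rationality of $\psi$ forces $\mca{A}_\sigma(0)$ to be a closed null subset of $\R$ (by the analogue of Lemma~\ref{lem_nullset} noted in the paragraph introducing $\mca{A}_\sigma(H)$). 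A continuous map from $[0,1]$ into a closed null subset of $\R$ must be constant, so $s_\sigma(H) = s_\sigma(0)$, and since this holds for every spectral invariant $s$ and every $\sigma$ the hypothesis of the lemma is contradicted. The main obstacle is the core step above: one has to keep careful track of $f_{\tau H}$, the push-forward $(f_{\tau H})_*(\gamma_\sigma)$ of the reference cycle, and the $2$-chains $Z$, in order to verify that the values $\int_Z \omega_\psi$ defining $\mca{A}_\sigma(\tau H)$ truly do not vary with $\tau$ once the periodic orbits are trapped in $M \setminus \supp(H)$.
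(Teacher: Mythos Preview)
Your argument is correct and follows exactly the approach the paper indicates (the paper's ``proof'' is simply the remark that it is parallel to that of Lemma~\ref{lem_local_sensitivity}). One small slip: the identity $f_{\tau H}^{*}\omega_{\psi_{\tau H}}=\omega_\psi$ is not literally true---a direct computation gives $f_{\tau H}^{*}\omega_{\psi_{\tau H}}=\omega_\psi - d(\tau H\,dt)$---but since the correction is exact and the orbit sets $\alpha$ avoid $\supp(H)$, this does not disturb your displayed description of $\mca{A}_\sigma(\tau H)$ (indeed, with the paper's convention that $Z$ lives in $M_\psi$ and one integrates $\omega_\psi$, the pullback identity is not even needed).
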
 

The proof of this lemma is parallel to that of Lemma \ref{lem_local_sensitivity}. 
As we explain in Section 5.5, 
for rational area preserving surface maps, 
the assumption of this lemma is satisfied 
with spectral invariants defined from PFH (periodic Floer homology).

\section{Constructions of spectral invariants} 

We briefly explain the construction of ECH spectral invariants (Section 5.1), 
state the Weyl law, and deduce Theorem \ref{thm_local_sensitivity_dim3} (Section 5.2). 
Then we explain the construction of an elementary alternative to ECH spectral invariants (Section 5.3), 
and sketch an ``elementary'' proof of the strong closing lemma for the standard contact $3$-sphere (Section 5.4). 
Finally we discuss the case of rational area-preserving maps (Section 5.5). 

\subsection{ECH spectral invariants} 

ECH (embedded contact homology) is a version of Floer homology for contact $3$-manifolds 
which was introduced by Hutchings as an analogue of the Gromov--Taubes invariant \cite{Taubes_Gromov} for closed $4$-dimensional symplectic manifolds. 
First we very briefly recall the construction of ECH. 
See the excellent introductory text \cite{Hutchings_lecturenote}. 

Let $(Y,\xi)$ be a closed contact $3$-manifold and let  $\lambda \in \Lambda_\nondeg(Y,\xi)$. 
For any $\gamma \in \mca{P}_\emb(Y, \lambda)$,
the number of real eigenvalues of the linearized return map of $\gamma$ is $0$ or $2$. 
In the first case $\gamma$ is called \textit{elliptic}, 
and in the second case $\gamma$ is called \textit{hyperbolic}. 
Let $\gen_\ech(Y,\lambda)$ 
denote the set 
which consists of finite subsets 
$\{(m_i, \gamma_i)\}_{i  \in I}$
of $\Z_{>0} \times \mca{P}_\emb(Y,\lambda)$
such that 
if $\gamma_i$ is hyperbolic then $m_i=1$. 

Although ECH can be defined with $\Z$-coefficients, 
for our purposes it is sufficient to consider $\Z/2$-coefficients. 
Let $C_\ech(Y,\lambda)$ denote the $\Z/2$-vector space generated by 
$\gen_\ech(Y,\lambda)$. 

For any element $x= \{(m_i,\gamma_i)\}_{i \in I}$ of $\gen_\ech(Y,\lambda)$, 
we define an one-dimensional current $C(x)$ on $Y$ 
by $C(x)(\alpha):= \sum_{i \in I} m_i \int_{\gamma_i} \alpha \quad  (\forall \alpha \in \Omega^1(Y))$. 
Also, we define $\mca{A}_\lambda(x):=C(x)(\alpha) \in \R$. 
For any $a \in \R$, 
let $C^a_\ech(Y,\lambda)$ 
denote the subspace of $C_\ech(Y,\lambda)$ 
which is generated over 
$\{ x \in \gen_\ech(Y,\lambda) \mid \mca{A}_\lambda(x)<a\}$. 

Let us define a boundary operator on $C_\ech(Y,\lambda)$.
Let $\mca{J}(Y,\lambda)$ denote the set of all almost complex structures on 
$Y \times \R$ satisfying the following conditions: 
\begin{itemize} 
\item $J$ is preserved by the natural action of $\R$ on $Y \times \R$. 
\item $J$ preserves $\xi$. Moreover $g_J(v,w):=d\lambda(v, Jw)$ is a positive-definite symmetric bilinear form on $\xi$. 
\item $J(\del_r)=R_\lambda$, where $r$ denotes the coordinate on $\R$. 
\end{itemize} 
For any $J \in \mca{J}(Y,\lambda)$, 
let $\tilde{\mca{M}}_{J, \inj}$ denote the set of quadruples 
$(\Sigma, P^+, P^-, u)$
satisfying the following conditions: 
\begin{itemize} 
\item $\Sigma$ is a connected and closed Riemann surface. Let $j_\Sigma$ denote the complex structure on $\Sigma$. 
\item $P^+$ and $P^-$ are disjoint finite subsets of $\Sigma$. 
\item $u: \Sigma \setminus (P^+ \cup P^-) \to Y \times \R$ is a $J$-holomorphic curve, i.e. there holds $J \circ du = du \circ j_\Sigma$. 
\item For any $p^+ \in P^+$, 
there exist 
a complex coordinate on $\Sigma$ such that $p^+$ corresponds to $0$
and $\gamma^+ \in \mca{P}(Y,\lambda)$, such that 
\[ 
\lim_{r \to \infty} u_\R(e^{-r - i \theta}) = \infty, \qquad
\lim_{r \to \infty} u_Y(e^{-r-i\theta}) = \gamma^+(\theta).
\] 
Here $u_\R$ denotes the $\R$-component of $u$, 
and $u_Y$ denotes the $Y$-component of $u$. 
\item For any $p^- \in P^-$, 
there exist 
a complex coordinate on $\Sigma$ such that $p^-$ corresponds to $0$
and 
$\gamma^- \in \mca{P}(Y,\lambda)$, such that 
\[ 
\lim_{r \to -\infty} u_\R(e^{r+i \theta}) = -\infty, \qquad 
\lim_{r \to -\infty} u_Y(e^{r+i\theta}) = \gamma^-(\theta). 
\] 
\item $u$ is somewhere injective. 
Namely, there exists $z \in \Sigma \setminus (P^+ \cup P^-)$ such that 
$u^{-1}(u(z))= \{z\}$
and $(du)_z: T_z\Sigma \to T_{u(z)}(Y \times \R)$ is injective. 
\end{itemize} 

Considering biholomorphic maps acting on domains of maps, 
one can define a natural equivalence relation on 
$\tilde{\mca{M}}_{J, \inj}$. 
Let $\mca{M}_{J, \inj}$ denote the quotient by this equivalence relation. 

\begin{remark} 
For any $\gamma \in \mca{P}_\emb(Y,\lambda)$, 
take its representative and denote it by $\tilde{\gamma}$. 
Then, a $J$-holomorphic map 
\[
\C \setminus \{0\} \to Y \times \R; \quad  e^{r+2\pi i\theta} \mapsto (r, \tilde{\gamma}(\theta))
\]
gives an element of $\mca{M}_{J, \inj}$. 
We denote this element by $\gamma \times \R$ and call it a trivial cylinder. 
\end{remark} 

For generic $J$, 
the moduli space $\mca{M}_{J,\inj}$ has a structure of a finite-dimensional manifold. 
Note that the dimension may depend on each connected component. 
Since $J$ is invariant by the $\R$-action, 
there exists a natural $\R$-action on $\mca{M}_{J,\inj}$. 
A point on $\mca{M}_{J,\inj}$ is fixed by this $\R$-action 
if and only if it is a disjoint union of trivial cylinders. 

For any $[u] \in \mca{M}_{J,\inj}$, 
the image of $u$ intersects $Y \times \{r\}$
transversally 
if $|r|$ is sufficiently large. 
Projecting the intersection to $Y$, 
we obtain a one-dimensional current on $Y$.
Let $C([u], r)$ denote this current. 
For any $x, y  \in \gen_\ech(Y,\lambda)$, 
let $\mca{M}_J(x,y)$ denote the set consisting of finite sets 
$\mca{C} = \{ (m_i , [u_i]) \}_{i \in I } \subset \Z_{>0} \times \mca{M}_{J, \inj}$ such that 
\[ 
\lim_{r \to \infty} \sum_{i \in I} m_i \cdot C([u_i], r) = x, \qquad
\lim_{r \to -\infty} \sum_{i \in I} m_i \cdot C([u_i],r) = y. 
\] 
Here $\lim$ denotes the limit as currents. 

For any $\mca{C} \in \mca{M}_J(x,y)$
one can define an integer $I(\mca{C})$ called ECH-index. 
For generic $J$ the following properties hold 
(see \cite{Hutchings_lecturenote} Proposition 3.7, Section 5.3, 5.4): 
\begin{itemize} 
\item $I(\mca{C}) \ge 0$ for any $\mca{C} \in \mca{M}_J(x,y)$. 
\item If $I(\mca{C})=0$ then $\mca{C}$ is a disjoint union of trivial cylinders. Namely $\mca{C} = \{ (m_i, \gamma_i \times \R) \}_{i\in I}$.  
\item If $I(\mca{C})=1$ then $\mca{C}=\mca{C}_0 \sqcup \{(1, C_1)\}$, 
where $I(\mca{C}_0)=0$ 
and $C_1$ is represented by an embedded $J$-holomorphic curve, 
and the dimension of $\mca{M}_{J,\inj}$ on a neighborhood of $C_1$ is $1$. 
Note that dimension of the quotient by the $\R$-action of this neighborhood is $0$. 
\item For any $x$ and $y$, $\{ \mca{C}  \in \mca{M}_J(x,y) \mid I(\mca{C})=1 \}/\R$ is a finite set. 
Let $\del_J(x,y) \in \Z/2$ denote the modulo $2$ cardinality of this set, 
and define the boundary operator $\del_J$ on $C_\ech(Y,\lambda)$ by 
\[ 
\del_J(x):= \sum_{y \in \gen_\ech(Y,\lambda)}  \del_J(x,y) \cdot y \qquad( x  \in \gen_\ech (Y,\lambda)). 
\]
Then $\del_J^2=0$. 
\end{itemize} 

Summing up, we have obtained a chain complex $C_\ech(Y,\lambda)$. 
Let $\ech(Y,\lambda)$ denote the homology of this complex. 
$\ech(Y,\lambda)$ depends only on $\xi$, 
not depending on the choice of $\lambda \in \Lambda_\nondeg(Y,\xi)$. 
Then we denote $\ech(Y,\lambda)$ by $\ech(Y,\xi)$. 
Actually, $\ech(Y,\xi)$ depends only on the homotopy type of $\xi$ as an oriented plane field. 
This is a consequence of the isomorphism between ECH and Seiberg-Witten cohomology, 
which is due to Taubes \cite{Taubes_ECH_SWF}. 
This isomorphism is an analogue of the fact that 
the Gromov--Taubes invariant of a closed symplectic four-manifold is equal to the corresponding Seiberg-Witten invariant, 
which is also due to Taubes \cite{Taubes_Gromov}. 

To define ECH spectral invariants we need the following lemma. 

\begin{lemma}\label{lem_action} 
Let $x, y \in \gen_\ech(Y, \lambda)$.
If 
$\mca{M}_J(x,y) \ne \emptyset$, then 
$\mca{A}_\lambda(x) \ge \mca{A}_\lambda(y)$. 
\end{lemma}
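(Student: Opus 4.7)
The plan is to establish the sharper identity
\[
\mca{A}_\lambda(x) - \mca{A}_\lambda(y) = \sum_{(m_i,[u_i]) \in \mca{C}} m_i \int_{\Sigma_i \setminus (P_i^+ \cup P_i^-)} u_i^* d\lambda,
\]
together with pointwise non-negativity of each $u_i^* d\lambda$; the lemma then follows immediately. This is the standard energy identity for $J$-holomorphic curves in a symplectization, and the ingredients are already encoded in the definition of $\mca{J}(Y,\lambda)$.

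For pointwise non-negativity I would use the compatibility properties of $J \in \mca{J}(Y,\lambda)$. The form $d\lambda$ on $Y \times \R$ is pulled back from $Y$, so it annihilates $\del_r$; it also annihilates $R_\lambda$ by the defining property of the Reeb vector field. Decomposing a general tangent vector at a point of $Y \times \R$ as $w = a\, \del_r + b\, R_\lambda + w_\xi$ with $w_\xi \in \xi$, the relations $J\del_r = R_\lambda$ and $J\xi = \xi$ give $Jw = -b\, \del_r + a\, R_\lambda + J w_\xi$, and hence
\[
d\lambda(w, Jw) \;=\; d\lambda(w_\xi, J w_\xi) \;=\; g_J(w_\xi, w_\xi) \;\ge\; 0.
\]
Taking $w = du_i(V)$ for a tangent vector $V$ on the domain and using the $J$-holomorphicity identity $du_i \circ j_\Sigma = J \circ du_i$, one sees that $u_i^* d\lambda$ is a non-negative multiple of the area form on $\Sigma_i \setminus (P_i^+ \cup P_i^-)$; in particular $\int u_i^* d\lambda \ge 0$.

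To match the weighted sum with $\mca{A}_\lambda(x) - \mca{A}_\lambda(y)$, I would excise small coordinate disks $\{|z| \le e^{-r}\}$ around each puncture, in the complex coordinates provided by the asymptotic data, and apply Stokes' theorem on the truncated domain, converting $\int u_i^* d\lambda$ into a line integral of $u_i^* \lambda$ along the resulting boundary circles. With the circle-orientation induced as the boundary of the truncated domain, the prescribed convergence $u_Y(e^{-r-i\theta}) \to \gamma^+(\theta)$ at a positive puncture implies that the corresponding boundary integral converges to $+T_{\gamma^+_{i,p}}$, and the analogous statement at a negative puncture gives $-T_{\gamma^-_{i,p}}$. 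Weighting by $m_i$ and invoking the defining convergence property of $\mca{C} \in \mca{M}_J(x,y)$---namely $\sum_i m_i \cdot C([u_i], r) \to x$ as currents when $r \to \infty$, and $\to y$ as $r \to -\infty$---the total positive-puncture contribution becomes $C(x)(\lambda) = \mca{A}_\lambda(x)$ and the total negative-puncture contribution becomes $-\mca{A}_\lambda(y)$, which proves the displayed identity.

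The main technical obstacle is the asymptotic analysis at the punctures: one must justify that the line integrals of $u_i^* \lambda$ over the shrinking circles converge to the actions of the asymptotic Reeb orbits, and that the improper integral $\int u_i^* d\lambda$ over the non-compact domain is finite with the expected limit. This step is not purely formal: it rests on the standard exponential-decay estimates for finite-energy punctured $J$-holomorphic curves in symplectizations due to Hofer--Wysocki--Zehnder. In writing the proof I would simply cite this analytic input rather than reprove it.
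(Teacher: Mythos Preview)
Your proposal is correct and follows essentially the same approach as the paper: both establish the identity $\mca{A}_\lambda(x)-\mca{A}_\lambda(y)=\sum_i m_i\int_{u_i} d\lambda$ via Stokes' theorem and then use the compatibility of $J$ with $\xi$ (positivity of $g_J$) to conclude each integral is nonnegative. The paper's proof is considerably terser---it simply invokes Stokes and the positivity of $g_J$---whereas you spell out the tangent-space decomposition and correctly flag the HWZ asymptotic analysis that underlies the Stokes step at the punctures; this is more careful than the paper, but not a different argument.
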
 
\begin{proof} 
For any $\{  (m_i, [u_i]) \}_{i \in I} \in \mca{M}_J(x,y)$, 
there holds 
\[ 
\mca{A}_\lambda(x) - \mca{A}_\lambda(y) = \sum_{i \in I} m_i \cdot \int_{u_i} d \lambda 
\] 
by Stokes' theorem. 
Since $J$ preserves $\xi$ and $g_J$ is positive definite, 
for every $i$ there holds $\int_{u_i} d \lambda \ge 0$. 
Then we obtain RHS $\ge 0$. 
\end{proof} 

By this lemma, for any $a \in \R$, $C^a_\ech(Y,\lambda)$ is a subcomplex of $C_\ech(Y,\lambda)$. 
Let $\ech^a(Y,\lambda)$ denote the homology of $C^a_\ech(Y,\lambda)$, 
and let 
\[ 
i^a_\lambda: \ech^a(Y,\lambda) \to \ech(Y,\lambda) \cong \ech(Y,\xi)
\] 
be the natural map. These data are well-defined, not depending on $J$. 
Then, for any $\sigma \in \ech(Y,\xi) \setminus \{0\}$, 
one can define $s_\sigma: \Lambda_\nondeg(Y,\xi) \to \R_{\ge 0}$ by 
\[ 
s_\sigma(\lambda):= \inf\{ a  \mid \sigma \in \image i^a_\lambda\}. 
\] 
Then $s_\sigma$ satisfies spectrality, conformality, and monotonicity. 
The first two properties are straightforward. 
Monotonicity follows from the existence of ECH cobordism maps 
and the fact that the cobordism maps preserve action filtrations. 
See \cite{Hutchings_quantitative} and \cite{Hutchings_lecturenote}. 
As noted right after Definition \ref{defn_action_selector}, 
$s_\sigma$ uniquely extends to a spectral invariant 
$\Lambda(Y, \xi) \to \R_{\ge 0}$, 
which we also denote by $s_\sigma$. 

This construction of ECH spectral invariants is due to Hutchings \cite{Hutchings_quantitative}. 
In the same paper, 
he introduced a sequence of symplectic capacities 
called ECH capacities for four-dimensional Liouville domains (see Section 5.3 for the definition of Liouville domains) 
and demonstrated their applications to symplectic embedding problems. 

\subsection{Weyl law of ECH spectral invariants} 

Our first goal is to state the Weyl law of ECH spectral invariants. 
Let $(Y,\xi)$ be a closed and connected contact $3$-manifold, 
and let $\lambda \in \Lambda_\nondeg(Y,\xi)$. 
Let us define 
$[\, \cdot \, ]: \gen_\ech(Y, \lambda) \to H_1(Y:\Z)$ by 
$\big[ \{ (m_i, \gamma_i) \}_{i \in I} \big] := \sum_{i \in I}  m_i \cdot [\gamma_i]$. 
Here $[\gamma_i] \in H_1(Y:\Z)$ is the fundamental class of any representative of $\gamma_i$. 
For any $\Gamma \in H_1(Y:\Z)$, let us denote 
$\gen_\ech(Y,\lambda:\Gamma):= \{ x \in \gen_\ech(Y,\lambda) \mid [x] = \Gamma \}$, 
and let 
$C_\ech(Y, \lambda:\Gamma)$ be the subspace of $C_\ech(Y:\lambda)$ 
generated over 
$\gen_\ech(Y, \lambda:\Gamma)$.

For any $x, y \in \gen_\ech(Y, \lambda)$, if 
$\mca{M}_J(x,y) \ne \emptyset$ then $[x]=[y]$, 
thus 
$C_\ech(Y,\lambda:\Gamma)$
is a subcomplex of $C_\ech(Y,\lambda)$. 
Let $\ech(Y,\lambda:\Gamma)$ denote the homology of this complex. 
Let us define 
$d_\Gamma \in 2 \Z_{\ge 0}$ by 
\[ 
\{ \langle c_1(\xi) + 2 \mathrm{PD}(\Gamma), a \rangle \mid a \in H_2(Y:\Z) \}  = d_\Gamma \Z. 
\] 
Here $c_1$ denotes the Chern class, and 
$\mathrm{PD}$ denotes the Poincar\'{e} dual. 
For any $x,y \in \gen_\ech(Y,\lambda:\Gamma)$
one can define 
$I(x,y) \in \Z/d_\Gamma$, and if 
$\del_J(x,y)\ne 0$ then $I(x,y)=1$. 
Hence the chain complex $C_\ech(Y,\lambda:\Gamma)$
and the homology $\ech(Y, \xi:\Gamma)$ has relative $\Z/d_\Gamma$-grading. 

It is easy to verify $c_1(\xi) \in 2 H^2(Y:\Z)$, thus 
there exists $\Gamma \in H_1(Y:\Z)$ such that $d_\Gamma=0$. 
For such $\Gamma$, $\ech(Y, \xi:\Gamma)$ has relative $\Z$-grading.
Moreover, this relative grading is unbounded from above; 
indeed, there exists a sequence $(\sigma_k)_{k \ge 1}$ of nonzero homology classes 
such that $I(\sigma_{k+1}, \sigma_k)=2$ for all $k$. 
This is a consequence of the fact that 
the isomorphism defined by Taubes \cite{Taubes_ECH_SWF} 
between ECH and the Seiberg-Witten-Floer cohomology
preserves grading, and the corresponding result on the Seiberg-Witten-Floer cohomology, which is due to Kronheimer-Mrowka \cite{KM}. 
See \cite{CGH} Corollary 2.2 for details. 

Now we are ready to state the Weyl law. 

\begin{theorem}[Cristofaro-Gardiner--Hutchings--Ramos \cite{CGHR}]\label{thm_Weyl_law_ECH}
Let $(Y,\xi)$ be a closed and connected contact $3$-manifold, 
let $\Gamma \in H_1(Y:\Z)$ such that $d_\Gamma=0$,
and let $(\sigma_k)_{k \ge 1}$ 
be a sequence of nonzero homogeneous classes in $\ech(Y, \xi:\Gamma)$ 
such that $I(\sigma_{k+1}, \sigma_k)=2$ for all $k$. 
Then, for any $\lambda \in \Lambda(Y, \xi)$ 
\begin{equation}\label{eqn_Weyl_law} 
\lim_{k \to \infty}   \frac{s_{\sigma_k}(\lambda)^2}{2k} =  \vol(Y,\lambda):= \int_Y \lambda \wedge d\lambda. 
\end{equation} 
\end{theorem}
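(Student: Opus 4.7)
The plan is to combine the formal properties of $s_\sigma$ established above with Taubes' isomorphism between ECH and Seiberg--Witten Floer cohomology, which at present is the only known route to the sharp volume asymptotic. The argument naturally splits into a relatively soft upper bound, a harder gauge-theoretic lower bound, and a preliminary reduction.

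First I would use the $C^0$-continuity of each $s_{\sigma_k}$ (which follows from conformality and monotonicity, as noted after Definition~\ref{defn_action_selector}) to reduce to the case $\lambda \in \Lambda_\nondeg(Y,\xi)$, so that the chain complex $C_\ech(Y,\lambda:\Gamma)$ is available. As a sanity check that pins down the constant $2k$ in the denominator, the case of an ellipsoid boundary $\partial E_a \subset \R^4$ with $a_1/a_2 \notin \Q$ can be handled directly: both simple orbits are elliptic, the ECH generators for $\Gamma = 0$ are parametrized by $(m_1,m_2) \in \Z_{\ge 0}^2$ with action $m_1 a_1 + m_2 a_2$, and a two-dimensional lattice-point count gives $s_{\sigma_k}(\lambda_a)^2 / (2k) \to a_1 a_2 = \vol(\partial E_a, \lambda_a)$.

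For the upper bound $\limsup_k s_{\sigma_k}(\lambda)^2/(2k) \le \vol(Y,\lambda)$, I would pack a thin shell of the symplectization of $(Y,\lambda)$ by disjoint embedded four-dimensional ellipsoids whose total symplectic volume approaches $\vol(Y,\lambda)$. Monotonicity and subadditivity of ECH spectral invariants under the associated symplectic cobordisms (in the spirit of \cite{Hutchings_quantitative}) bound $s_{\sigma_k}(\lambda)$ from above by sums of ellipsoid spectral invariants with total index $k$. Plugging in the ellipsoid asymptotic from the previous paragraph and letting the packing density tend to $1$ yields the desired upper bound.

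The main obstacle is the matching lower bound $\liminf_k s_{\sigma_k}(\lambda)^2/(2k) \ge \vol(Y,\lambda)$, where the soft properties of $s_\sigma$ alone are insufficient. Here I would invoke Taubes' isomorphism between $\ech(Y,\xi:\Gamma)$ and Seiberg--Witten Floer cohomology: after perturbing the Seiberg--Witten equations by $r\lambda$ and letting $r \to \infty$, the ECH action filtration is intertwined with the filtration coming from the Chern--Simons--Dirac functional, and Taubes' large-$r$ asymptotic analysis forces the corresponding spectral values to grow like $\sqrt{2k \cdot \int_Y \lambda \wedge d\lambda}$. This gauge-theoretic input --- the same machinery underlying Taubes' proof of the three-dimensional Weinstein conjecture \cite{Taubes_Weinstein} --- is by far the hardest step, and no purely holomorphic-curve proof of the lower bound is currently known.
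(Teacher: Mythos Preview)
Your proposal has the roles of the two inequalities reversed, and this is not just a labeling slip: the arguments you attach to each bound actually prove the opposite inequality. Packing ellipsoids (or balls) into a thin shell of the symplectization of $(Y,\lambda)$ produces, after removing the interiors of the balls, a symplectic cobordism whose \emph{positive} (convex) boundary is $(Y,\lambda)$ and whose \emph{negative} (concave) boundary is the disjoint union of sphere boundaries. The ECH cobordism map goes from the positive end to the negative end and does not increase the action filtration, so what you obtain is $s_{\sigma_k}(Y,\lambda) \ge$ (a suitable sum of ball spectral invariants), hence the \emph{lower} bound $\liminf_k s_{\sigma_k}(\lambda)^2/(2k) \ge \vol(Y,\lambda)$. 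This is exactly the ``ball packing argument'' in \cite{CGHR}, and it is the soft half of the proof. Your claim that packing gives an upper bound would require a cobordism with $(Y,\lambda)$ on the negative end, which for a general contact $3$-manifold you simply do not have.

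Conversely, the gauge-theoretic input from Taubes' large-$r$ perturbation of the Seiberg--Witten equations is what yields the \emph{upper} bound $\limsup_k s_{\sigma_k}(\lambda)^2/(2k) \le \vol(Y,\lambda)$: roughly, the analysis bounds the Chern--Simons--Dirac energy (hence, via the Taubes isomorphism, the ECH action) of the min-max generator of grading $2k$ from above by $\sqrt{2k\,\vol(Y,\lambda)}$ plus lower-order terms. So your sentence ``no purely holomorphic-curve proof of the lower bound is currently known'' should read ``upper bound''. The paper's discussion after the theorem (and \cite{CGHR}, \cite{Sun}) has the directions this way; once you swap them, the structure of your sketch matches the actual proof.
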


We can prove Theorem \ref{thm_local_sensitivity_dim3} as an immediate consequence. 
For any $\lambda \in \Lambda(Y, \xi)$ and $h \in C^\infty(Y, \R_{\ge 0}) \setminus \{0\}$, 
a simple computation shows that $\vol(Y, e^h \lambda) > \vol (Y, \lambda)$. 
Then $s_{\sigma_k}(Y, e^h\lambda) > s_{\sigma_k}(Y, \lambda)$ for sufficiently large all $k$. 
This implies that $\lambda$ satisfies local sensitivity. 

Theorem \ref{thm_Weyl_law_ECH} was conjectured in \cite{Hutchings_quantitative}.
The conjecture was proved in \cite{Hutchings_quantitative} 
for some special cases
(e.g. boundaries of star-shaped domains in $\C^2$). 
The proof for all contact $3$-manifolds was first given in \cite{CGHR}. 
Later, Sun \cite{Sun} gave an alternative proof with refined estimates.  

Let us briefly comment on the proofs of Theorem \ref{thm_Weyl_law_ECH}. 
For the boundaries of symplectic ellipsoids (in particular, balls), 
all values of ECH spectral invariants are explicitly computed in \cite{Hutchings_quantitative}, 
and the Weyl law follows directly from this computation. 
For general $(Y,\lambda)$, 
the lower bound 
$\liminf_{k \to \infty} s_{\sigma_k}(\lambda)^2/(2k) \ge \vol(Y,\lambda)$
follows from this result for balls and an interesting geometric argument called ball packing argument; see \cite{CGHR} for details. 
For some special cases (e.g. star-shaped domains in $\C^2$), 
one can also establish the upper bound 
$\limsup_{k \to \infty} s_{\sigma_k}(\lambda)^2/(2k) \le \vol(Y, \lambda)$
using ball packing argument; see \cite{Hutchings_quantitative}. 
In the general case, however, proving the upper bound requires 
a large perturbation (involving the contact form) of the Seiberg-Witten equations, 
which appeared in the Taubes's proof of the $3$-dimensional Weinstein conjecture; 
see \cite{CGHR} and \cite{Sun}. 

\subsection{Elementary spectral invariants} 

Hutchings introduced elementary capacities \cite{Hutchings_elementary} 
of $4$-dimensional Liouville domains 
and elementary spectral invariants \cite{Hutchings_elementary_closing} 
of contact $3$-manifolds. 
These invariants are elementary alternatives to ECH capacities/spectral invariants, 
and recover many applications of ECH, including strong closing lemmas. 
In this subsection, 
we discuss the definition and main properties 
of elementary spectral invariants of contact $3$-manifolds. 

Let  $(Y, \xi)$ be a closed contact $3$-manifold, 
and let $\lambda \in \Lambda_\nondeg(Y,\xi)$. 
For any $R \in \R_{>0}$, 
let $\mca{J}_R(Y, \lambda)$ denote the set of almost complex structures $J$ on $Y \times \R$ such that 
\begin{itemize} 
\item $J$ is compatible with the symplectic form $d(e^r\lambda)$ on $Y \times \R$, 
\item $J|_{Y \times \R_{\ge 0}}$ maps $\del_r$ to $R_\lambda$, and invariant by the $\R_{\ge 0}$-translation on $Y \times \R_{\ge 0}$, 
\item $J|_{Y \times \R_{\le -R}}$ maps $\del_r$ to $R_\lambda$, and invariant by the $\R_{\le 0}$-translation on $Y \times \R_{\le -R}$. 
\end{itemize} 

For any $J \in \mca{J}_R(Y,\lambda)$, one can define $\mca{M}_{J, \inj}$ similarly to Section 5.1. 
For any $u \in \mca{M}_{J, \inj}$, let $E_+(u):= \sum_\gamma T_\gamma$, 
where $\gamma$ runs over periodic Reeb orbits appearing on positive ends of $u$. 
Let $\mca{M}_J$ denote the set consisting of finite sets $u:=\{u_i\}_i$, 
where $u_i$ are distinct elements of $\mca{M}_{J, \inj}$. 
Let $E_+(u):= \sum_i E_+(u_i)$ and $\mathrm{Im}(u):= \bigcup_i \mathrm{Im}(u_i)$. 

For each $k \in \Z_{>0}$ and $R \in \R_{>0}$, let 
\begin{equation}\label{eqn_elementary_sRk} 
s^R_k(Y, \lambda):= \sup_{\substack{J \in \mca{J}_R(Y,\lambda) \\ x_1,\ldots,x_k \in Y \times [-R, 0]}}
\bigg( \inf_{\substack{ u \in \mca{M}_J \\ x_1, \ldots, x_k \in \mathrm{Im}(u)}} E_+(u) \bigg). 
\end{equation} 
This is nondecreasing on $R$, thus we can define the $k$-th elementary spectral invariant 
\[ 
s_k(Y, \lambda):= \lim_{R \to \infty} s^R_k(Y, \lambda). 
\]  
Actually this definition is slightly different from that in \cite{Hutchings_elementary_closing}, however is equivalent to it. 
We also set $s_0(Y,\lambda):=0$. 

Let us summarize main properties of the elementary spectral invariants. 
Let $\lambda \in \Lambda_\nondeg(Y, \xi)$. 
\begin{description} 
\item[(Conformality)] $s_k(Y,a\lambda) = s_k(Y,\lambda)$ for any $a \in \R_{>0}$ and $k \ge 0$. 
\item[(Increasing)] $0 < s_k(Y,\lambda) \le s_{k+1}(Y, \lambda)$ for any $k \ge 1$. 
\item[(Subadditivity)] $s_{k+l}(Y, \lambda) \le s_k(Y, \lambda) + s_l(Y, \lambda)$ for any $k, l \ge 0$. 
\item[(Spectrality)]  $s_k(Y, \lambda) \in \mca{A}_+(Y,  \lambda) \cup \{\infty\} $ for any $k \ge 0$. 
\item[(Monotonicity)]
$s_k(Y, \lambda) \le s_k(Y, e^h\lambda)$ for any $k \ge 0$ and $h \in C^\infty(Y, \R_{\ge 0})$ such that $e^h \lambda \in \Lambda_\nondeg(Y,\xi)$. 
\item[(Weyl Law)] $\lim_{k \to \infty} \frac{s_k(Y,\lambda)}{\sqrt{k}} = \sqrt{2 \mathrm{vol}(Y, \lambda)}$. 
\item[(Slow growth)] $\lim_{k \to \infty} \frac{s_k(Y,\lambda)}{k}=0$. 
\end{description} 

The first four  properties are straightforward from the definition. 
Monotonicity is proved in \cite{Hutchings_elementary_closing} Theorem 1.14. 
It follows from the so-called neck-stretching argument, 
however the issue is that one cannot directly apply the standard SFT-compactness theorem \cite{BEHWZ} 
due to the lack of a priori genus bound. 
This issue can be resolved with 
special properties in low dimensions (the relative adjunction formula and the asymptotic writhe bound), 
both proved in \cite{Hutchings_revisited}. 
The Weyl law follows from the 
comparison with the ECH spectral invariants (\cite{Hutchings_elementary_closing} Theorem 6.1), 
the Weyl law for the ECH spectral invariants, 
and the lower bound estimate proved by the ball packing argument (see \cite{Hutchings_elementary_closing} Theorem 1.14). 
Note that the Weyl law in particular implies $s_k(Y, \lambda)< \infty$ for all $k$, which is nontrivial. 
The slow growth property follows immediately from the Weyl law. 

So far the invariant $s_k$ is defined only for nondegenerate contact forms. 
It uniquely extends to all (possibly degenerate) contact forms so that the monotonicity holds. 
Clearly each $s_k$ is a spectral invariant, 
and the Weyl law implies that all contact forms on contact $3$-manifolds satisfy the strong closing property. 

Although the slow growth property seems much weaker than the Weyl law, 
it is in fact sufficient to establish local sensitivity (thus the strong closing lemma), with the help of the following lemma. 

\begin{lemma}\label{lem_ball_inserting}  
For any $h \in C^\infty(Y, \R_{\ge 0}) \setminus \{0\}$, there exists $\ep>0$ such that for any $k \in \Z_{\ge 0}$ 
\begin{equation}\label{eqn_ball_inserting}
s_{k+1} (Y, e^h\lambda) - s_k(Y, \lambda) 
\ge \ep. 
\end{equation} 
\end{lemma}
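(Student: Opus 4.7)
The strategy is to localize the extra room that $h$ provides as a symplectically embedded standard four-ball $B^4(\rho)$ of capacity $\ep_0 = \pi\rho^2 > 0$, place the $(k{+}1)$st marked point at its center, and then argue via a neck-stretching/ball-packing argument that any curve through all $k{+}1$ points must pick up at least $\ep_0$ of $E_+$-action on top of what $s_k^R(Y,\lambda)$ already demands from the other $k$ points.

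First I would extract the ball. Since $h \ge 0$ and $h \not\equiv 0$, pick a Darboux chart $U \subset Y$ on which $h \ge c > 0$. The shear $\Phi(y,r) = (y, r - h(y))$ is a symplectomorphism from $(Y \times (-\infty, 0], d(e^r\lambda))$ into $(Y \times (-\infty, 0], d(e^r e^h\lambda))$ (direct check: $\Phi^*(e^r e^h\lambda) = e^r\lambda$), and the complement of its image in the target has symplectic volume
\[
\int_Y \tfrac{1}{2}(e^{2h}-1)\, \lambda \wedge d\lambda > 0
\]
concentrated over $U$. Into this complement I embed a standard four-ball $B^4(\rho) \subset U \times [-R_0, 0]$ of some positive capacity $\ep_0 = \pi\rho^2$ depending only on $h$; this is possible because the complement contains an open Darboux neighborhood of a point $(y_0, -c')$ with $h(y_0) > c > c' > 0$. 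Set $\ep := \ep_0/2$.

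Next, given $k \ge 0$, small $\delta \in (0, \ep)$, and $R > R_0$, pick a near-optimizer $(J_0, x_1, \ldots, x_k)$ of $s_k^R(Y, \lambda)$. I construct $J \in \mca{J}_R(Y, e^h\lambda)$ that (i) agrees with $J_0$ outside the chart $U \times \R$---legitimate since $e^h\lambda = \lambda$ there and one may take a common restriction $J|_\xi$ on the contact distribution---(ii) equals the standard integrable structure $J_{\mathrm{std}}$ on the embedded ball, and (iii) interpolates in the chart. Take $x_{k+1}$ to be the center of the ball, perturbed if needed to differ from the other $x_i$. For any $u \in \mca{M}_J$ through all $k{+}1$ points, neck-stretch along $\partial B^4(\rho) \cong S^3$. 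The limiting configuration splits into an outer piece $u_{\mathrm{out}}$ in $(Y \times \R) \setminus B^4(\rho)$ passing through $x_1, \ldots, x_k$ and an inner piece $u_{\mathrm{in}}$ in the symplectization of $(S^3, \lambda_{\mathrm{std}})$ passing through $x_{k+1}$. The standard monotonicity lemma applied to $u_{\mathrm{in}}$ (a holomorphic curve through the center of a Darboux ball) forces its total positive-end action at $\partial B^4(\rho) \times \{+\infty\}$ to be $\ge \ep_0$; end-matching then gives the outer piece matching negative-end action. Capping those new negative ends by trivial half-cylinders extracts from $u_{\mathrm{out}}$ a competitor in $\mca{M}_{J_0}$ through $x_1, \ldots, x_k$ whose $E_+$ undershoots $E_+(u)$ by at least $\ep_0 - o_{R \to \infty}(1)$, so $E_+(u) \ge s_k^R(Y, \lambda) - \delta + \ep_0 - o(1)$. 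Letting $\delta \to 0$ and $R \to \infty$ yields the inequality.

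The main obstacle is justifying the neck-stretching decomposition rigorously. Classical SFT compactness for $\mca{M}_{J, \inj}$ fails without an a priori genus bound, which is not available from energy alone. This bound is precisely what the relative adjunction formula and asymptotic writhe inequality of \cite{Hutchings_revisited} supply in dimension three, and the argument used in \cite{Hutchings_elementary_closing} to prove the monotonicity of $s_k$ adapts to the present setup with the inserted ball; one also needs to verify that the $\ep_0$ accrued at $\partial B^4(\rho) \times \{-\infty\}$ genuinely enlarges $E_+(u)$ rather than being absorbed into negative ends at $Y \times \{-\infty\}$, which follows from the positivity of $d\lambda$-area on each piece.
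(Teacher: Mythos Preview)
Your approach is the same as the paper's: embed a standard ball $B^4(\ep)$ into the region $Y_h=\{(y,r)\mid 0<r<h(y)\}$ and then combine a neck-stretching argument with the monotonicity lemma for $J$-holomorphic curves. The paper's own proof is a two-line sketch citing exactly these two ingredients, so at the level of strategy there is nothing to distinguish.

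One step in your write-up is not quite right as stated. You build $J\in\mca{J}_R(Y,e^h\lambda)$ that ``agrees with $J_0$ outside $U\times\R$'' because ``$e^h\lambda=\lambda$ there''; but $U$ was chosen as a Darboux chart where $h\ge c>0$, which does not force $\supp(h)\subset\overline{U}$. More substantively, after stretching only along $\partial B^4(\rho)$, the outer piece $u_{\mathrm{out}}$ lives in the $e^h\lambda$-symplectization with a negative $S^3$-end, not in the $\lambda$-symplectization, so ``capping those new negative ends by trivial half-cylinders'' does not yield a competitor in $\mca{M}_{J_0}$ for $s_k(Y,\lambda)$. The clean fix---which is what the paper's phrase ``neck-stretching argument'' is pointing to, and what your own final paragraph alludes to---is to regard the region between the $\lambda$-level and the $e^h\lambda$-level with the ball excised as an exact cobordism from $(Y,e^h\lambda)$ to $(Y,\lambda)\sqcup(S^3,\lambda_{\mathrm{std}})$, and stretch along its entire (disconnected) negative boundary. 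The bottom level over $(Y,\lambda)$ then gives the genuine competitor for $s_k(Y,\lambda)$ (this is the monotonicity-of-$s_k$ argument), while the bottom level over the ball contributes at least $\ep$ of action by the monotonicity lemma; Stokes on the cobordism level adds these to bound $E_+(u)$ from below.
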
 
\begin{proof}[Proof (sketch)] 
Take $\ep>0$ so that 
$B^4(\ep):=E_{(\ep, \ep)}$ (in the notation of Example \ref{ex_ellipsoids}) 
symplectically embeds into
$Y_h:= \{ (y,r) \in Y \times \R \mid 0 < r< h(y)\}$ 
with the symplectic form $d(e^r\lambda)$. 
Then, (\ref{eqn_ball_inserting}) 
follows from 
the neck stretching argument 
and the monotonicity lemma for $J$-holomorphic curves. 
\end{proof}

Lemma \ref{lem_ball_inserting} implies the following corollary. 

\begin{corollary} \label{cor_spectral_gap_criterion} 
If $\inf_k s_{k+1}(Y,\lambda) - s_k(Y,\lambda)=0$ then 
$\lambda$ satisfies the local sensitivity. 
\end{corollary}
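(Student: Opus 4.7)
The plan is to combine the hypothesis $\inf_k \bigl( s_{k+1}(Y,\lambda) - s_k(Y,\lambda) \bigr) = 0$ with Lemma~\ref{lem_ball_inserting} in a direct pigeonhole fashion. Unpacking local sensitivity, I need to show that for every $h \in C^\infty(Y, \R_{\ge 0}) \setminus \{0\}$ there is some spectral invariant $s$ with $s(e^h\lambda) > s(\lambda)$. The candidates will be the elementary spectral invariants $s_k$ themselves; once each $s_k$ is known to be a spectral invariant in the sense of Definition~\ref{defn_action_selector}, it suffices to find a single index $k$ for which $s_k(e^h\lambda) > s_k(\lambda)$.

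First I would fix $h$ and invoke Lemma~\ref{lem_ball_inserting} to obtain $\ep > 0$ such that $s_{k+1}(Y, e^h\lambda) - s_k(Y, \lambda) \ge \ep$ for all $k \ge 0$. Next I would use the hypothesis to choose an index $k$ for which the gap $s_{k+1}(Y,\lambda) - s_k(Y,\lambda)$ is strictly less than $\ep$. Chaining the two inequalities gives
\[
s_{k+1}(Y, e^h \lambda) \;\ge\; s_k(Y, \lambda) + \ep \;>\; s_k(Y,\lambda) + \bigl( s_{k+1}(Y,\lambda) - s_k(Y,\lambda) \bigr) \;=\; s_{k+1}(Y, \lambda),
\]
which is exactly the local sensitivity inequality with $s = s_{k+1}$.

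To close the argument, I need each $s_{k+1}$ to qualify as a spectral invariant in the axiomatic sense. The conformality, monotonicity, and spectrality properties of the elementary invariants are listed in Section 5.3, and as noted after Definition~\ref{defn_action_selector}, $C^0$-continuity is automatic from conformality and monotonicity. The only point requiring care is that these properties are originally stated on $\Lambda_\nondeg(Y,\xi)$, but the same remark guarantees a canonical extension to all of $\Lambda(Y,\xi)$.

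The proof is almost entirely formal once Lemma~\ref{lem_ball_inserting} is in hand, so there is no serious obstacle here; the real content was packed into establishing Lemma~\ref{lem_ball_inserting} via neck-stretching and the monotonicity lemma. The only minor subtlety is matching the extension of $s_{k+1}$ to possibly degenerate contact forms with the axiomatic framework, but this is handled by the standard remark on unique extension.
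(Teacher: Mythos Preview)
Your argument is correct and is exactly the straightforward derivation the paper has in mind when it says ``Lemma~\ref{lem_ball_inserting} implies the following corollary'': pick $k$ with $s_{k+1}(Y,\lambda)-s_k(Y,\lambda)<\ep$ and combine with~(\ref{eqn_ball_inserting}). The verification that $s_{k+1}$ is a spectral invariant is likewise handled explicitly in the paper (``Clearly each $s_k$ is a spectral invariant'') just before Lemma~\ref{lem_ball_inserting}.
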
 

The slow growth property implies 
$\inf_k s_{k+1}(Y,\lambda) - s_k(Y, \lambda)=0$, 
thus $\lambda$ satisfies local sensitivity. 
It is not difficult to prove results similar to Corollary \ref{cor_spectral_gap_criterion} in more general settings, 
see e.g. \cite{Irie_SCP}. 
Also, a result similar to Corollary \ref{cor_spectral_gap_criterion} plays a key role in the work by Chaidez-Tanny \cite{Chaidez_Tanny} as we discuss it in Section 7.2. 

\begin{remark}\label{rem_quantitative_closing_lemma} 
Using the above arguments it is not difficult to show the following: 
\begin{quote} 
Let $k \ge 1$ and $r_k:= s_k(Y, \lambda)/k$. 
If $B^4(r_k)$ symplectically embeds into 
$(Y_h, d(e^r\lambda))$, 
then there exist $t \in [0,1]$ and $\gamma \in \mca{P}(e^{th}\lambda)$ 
such that $\image(\gamma) \cap \supp(h) \ne \emptyset$ and $T_\gamma \le s_k(Y, \lambda)$. 
\end{quote} 
This observation leads to quantitative closing lemmas, i.e. closing lemmas with upper bounds of periods of orbits. 
This idea is extensively investigated by Hutchings \cite{Hutchings_elementary_closing}. 
Quantitative closing lemmas for area-preserving maps are studied in \cite{Edtmair_Hutchings} and \cite{Edtmair_elementary}. 
Furthermore, 
this observation 
plays an important role in proving the 
$C^\infty$-closing lemma for $3$-dimensional Reeb flows in real-analytic setting; see \cite{Irie_KJM}. 
\end{remark} 

So far we have discussed elementary spectral invariants of contact $3$-manifolds. 
One can also define elementary capacities $(c_k)_{k \ge 0}$ of $4$-dimensional Liouville domains. 
A Liouville domain is a pair $(X,\lambda)$ such that $X$ is a $2n$-dimensional compact manifold with boundary, 
$\lambda \in \Omega^1(X)$, $d\lambda$ is symplectic and the vector field $Z \in \mca{X}(X)$ defined by $L_Z(d\lambda)=\lambda$
points strictly outwards at every point on $\del X$. 
The definition of the elementary capacities is similar to that of spectral invariants. 
See \cite{Hutchings_elementary} for the definition. 
An important difference is that it is not known whether
$c_1(X,\lambda)<\infty$ for every $4$-dimensional Liouville domain $(X,\lambda)$. 
In particular, the Weyl law is established only for some limited class of $4$-dimensional Liouville domains; see \cite{Hutchings_elementary}. 

Let us consider whether the argument in this section extends to 
high dimensions, i.e. to contact manifolds of dimension $\ge 5$ and Liouville domains of dimension $\ge 6$. 
The definition given by the formula (\ref{eqn_elementary_sRk}) makes sense, 
and the first four properties (conformality, increasing, subadditivity, and spectrality) hold. 
As already noted, the proof of monotonicity relies on special properties in low dimensions. 
This difficulty, however, can be circumvented by slightly modifying the definition of elementary spectral invariants. 
In contrast, it seems that the Weyl law is a genuinely special phenomenon in low dimensions, 
and slow growth property fails in high dimensions. 
The definition and properties of elementary spectral invariants in high dimensions are discussed in Section 7.1. 

\subsection{Slow growth property for the standard contact $3$-sphere} 

As far as the author knows, 
to prove the slow growth property of elementary spectral invariants 
for general contact $3$-manfolds, 
one requires the comparison with the ECH spectral invariants 
and the upper bound estimate of the ECH spectral invariants using 
perturbed Seiberg-Witten equations. 
However, for some nice contact $3$-manifolds, 
one can prove the slow growth property using only elementary knowledge of pseudo-holomorphic curve theory. 
The goal of this subsection is to sketch such a proof for contact forms on the standard contact $3$-sphere
(equivalently, for star-shaped hypersurfaces in $\R^4$). 
It is not difficult to extend the following proof to more general cases, for example restricted contact type hypersurfaces in $\R^4$. 

Let us state our goal as the following proposition. 

\begin{proposition}\label{prop_slowgrowth_S3} 
Let $\xi_{\mathrm{std}}$ denote the standard contact structure on $S^3$. 
For any $\lambda \in \Lambda(S^3, \xi_{\mathrm{std}})$, there holds 
$\lim_{k \to \infty} s_k(S^3, \lambda)/k = 0$. 
\end{proposition}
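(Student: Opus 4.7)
The plan is to reduce to the standard contact form $\lambda_0 := \lambda_2|_{S^3}$ on the unit sphere $S^3 \subset \C^2$, and then bound $s_k(S^3, \lambda_0)$ using pseudo-holomorphic curves arising from complex algebraic curves in $\C^2$. For the reduction, any $\lambda \in \Lambda(S^3, \xi_{\mathrm{std}})$ has the form $\lambda = e^h \lambda_0$ with $h \in C^\infty(S^3)$; setting $H := \max h$, the identity $e^H \lambda_0 = e^{H-h}\lambda$ with $H-h \ge 0$, together with monotonicity and conformality of $s_k$, gives
\[
s_k(S^3, \lambda) \;\le\; s_k(S^3, e^H\lambda_0) \;=\; e^H \cdot s_k(S^3, \lambda_0),
\]
so it suffices to prove $s_k(S^3, \lambda_0) = O(\sqrt{k})$.

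I would then identify the symplectization of $(S^3, \lambda_0)$ with $(\C^2 \setminus \{0\}, \omega_{\mathrm{std}})$ (after rescaling $\lambda_0$ so that the Reeb flow is the Hopf flow and every embedded closed Reeb orbit has a common period $T_0$). The standard integrable complex structure $J_0$ on $\C^2$ restricts to an element of $\mca{J}_R(S^3, \lambda_0)$. For any complex polynomial $p(z_1, z_2)$ of degree $d$, the zero locus $\{p = 0\} \subset \C^2$ is a $J_0$-holomorphic curve whose closure in the projective compactification meets the line at infinity in a divisor of degree $d$; restricting to $\C^2 \setminus \{0\}$ and decomposing into somewhere-injective components yields an element of $\mca{M}_{J_0}$ whose positive ends are covers of Hopf orbits with total action $E_+ \le dT_0$. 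Since the projective space of degree-$\le d$ polynomials has complex dimension $\binom{d+2}{2} - 1$, the $k$ linear conditions $p(x_1) = \cdots = p(x_k) = 0$ are solvable whenever $k \le \binom{d+2}{2} - 1$. Choosing $d := \lceil \sqrt{2k}\rceil$ then produces a $J_0$-holomorphic element of $\mca{M}_{J_0}$ through any prescribed $k$ points with $E_+ = O(\sqrt{k})$.

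To handle an arbitrary $J \in \mca{J}_R(S^3, \lambda_0)$ --- as forced by the supremum over $J$ in the definition of $s_k^R$ --- I would connect $J$ to $J_0$ by a smooth path $\{J_t\}_{t \in [0,1]}$ in $\mca{J}_R$, pass to the projective compactification, and follow the parametrized moduli space of degree-$d$ pseudo-holomorphic curves through the $k$ prescribed points. The nonvanishing of the relevant Severi-type count for the standard structure (there is a unique smooth degree-$d$ algebraic curve through $\binom{d+2}{2} - 1$ generic points), combined with the deformation invariance of Gromov--Witten-type counts, ensures the moduli space remains nonempty at every $t$. Exactness of the symplectic form in the symplectization prevents closed-sphere bubbling in the interior, so the only possible degenerations are nodal bubbling and SFT-style cobordism breaking, both of which distribute the marked points among the components. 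By SFT compactness \cite{BEHWZ} together with the relative adjunction formula and asymptotic writhe bound of \cite{Hutchings_revisited} (the a priori genus controls special to dimension $3$), the limit is a finite disjoint union of somewhere-injective $J$-holomorphic curves in $\mca{M}_J$ whose image contains every $x_i$, with total positive-end action $E_+ \le dT_0 = O(\sqrt{k})$.

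The hard part will be making this deformation-and-compactness argument fully rigorous. One must verify that the Severi/Gromov--Witten count remains nonvanishing for every tame extension of $J$ to the compactification and that its curves are realized geometrically rather than only virtually; one must rule out pathological bubbling in which all marked points collapse onto a multiply-covered component whose underlying simple curve has drastically smaller action; and one must confirm that the surviving limit genuinely decomposes into the pairwise distinct somewhere-injective pieces required by the definition of $\mca{M}_J$. All of these depend on the low-dimensional writhe and adjunction inputs of \cite{Hutchings_revisited} that also underpin the monotonicity of $s_k$, which is precisely why the slow-growth property is a three-dimensional phenomenon that does not persist in higher dimensions, as discussed at the end of Section~5.3.
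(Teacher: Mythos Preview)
Your approach has the same core ingredients as the paper's --- degree-$d$ curves through $d(d+3)/2$ points, a deformation from the integrable structure, and a neck-stretching step invoking the low-dimensional adjunction/writhe inputs --- but the paper packages them more cleanly by decoupling the two analytic arguments. Rather than first reducing to the round $\lambda_0$ and then attempting a single hybrid argument in the symplectization, the paper introduces an elementary capacity $c_k(M,\omega)$ for \emph{closed} symplectic $4$-manifolds, proves once and for all (Lemma~\ref{lem_monotonicity_closed}, via neck-stretching parallel to the proof of monotonicity) that $s_k(Y,\lambda)\le c_k(M,\omega)$ whenever $(Y\times\R_{\le 0},d(e^r\lambda))$ embeds symplectically in $M$, and then bounds $c_{d(d+3)/2}(\C P^2)\le d$ by a Severi-type cobordism (Proposition~\ref{prop_degree_d_curves}) run entirely inside the closed manifold $\C P^2$. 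This separation buys two concrete simplifications over your sketch: the $\C P^2$ deformation uses only ordinary Gromov compactness, with broken and multiply-covered configurations excluded by a clean virtual-dimension count (they have dimension at most $-2$ in the $1$-parameter family), so the worries in your last paragraph largely evaporate; and one never has to make sense of ``extending'' a cylindrical $J\in\mca{J}_R$ smoothly over the origin and the line at infinity, which is what your phrase ``pass to the projective compactification'' tacitly demands but which fails for generic translation-invariant $J|_\xi$. Your preliminary reduction to $\lambda_0$ via monotonicity is also unnecessary --- Lemma~\ref{lem_monotonicity_closed} applies directly to any $\lambda\in\Lambda(S^3,\xi_{\mathrm{std}})$ and in particular sidesteps the technical nuisance that the round form $\lambda_0$ is degenerate, so that the formula defining $s^R_k$ is not literally available for it.
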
 

We first define the elementary capacity of closed $4$-dimensional symplectic manifolds. 
Let $(M, \omega)$ be a closed $4$-dimensional symplectic manifold, 
and let $\mca{J}(M,\omega)$ be the set of almost complex structures on $M$ compatible with $\omega$. 
For any $J \in \mca{J}(M, \omega)$, let $\mca{M}_J(M, \omega)$ denote the set consisting of pairs
$(u,\Sigma)$ where 
$\Sigma$ is a (possibly disconnected) compact Riemann surface (with the complex structure $j_\Sigma$), 
$u: \Sigma \to M$ satisfies $J \circ du = du \circ j_\Sigma$, 
and the restriction of $u$ to each connected component of $\Sigma$ is nonconstant. 
Let $E(u):= \int_\Sigma u^*\omega$. 
For each $k \in \Z_{>0}$, we define 
\[ 
c_k(M, \omega):=
 \sup_{\substack{J \in \mca{J}(M, \omega) \\ x_1, \ldots, x_k \in M}} 
\bigg( \inf_{\substack{u \in \mca{M}_J(M,\omega) \\x_1, \ldots, x_k \in  \image(u)}}  E(u)  \bigg). 
\] 

The proof of the next lemma is parallel to that of the monotonicity property. 

\begin{lemma}\label{lem_monotonicity_closed} 
Let $(Y, \xi)$ be a closed contact $3$-manifold, and $\lambda \in \Lambda(Y, \xi)$. 
If $(Y \times \R_{\le 0}, d(e^r\lambda))$ symplectically embeds into $(M, \omega)$, 
then $s_k(Y, \lambda) \le c_k(M, \omega)$ for all $k$. 
\end{lemma}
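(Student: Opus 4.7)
The plan is to mimic the neck-stretching argument used in Hutchings's proof of the monotonicity property of $s_k$ discussed earlier in this subsection. The idea is to exploit the symplectic embedding to place the auxiliary points inside $M$, stretch the neck along the contact-type hypersurface that bounds the embedded symplectization, and extract from the limit a $J_0$-holomorphic curve in $Y\times\R$ that realises the desired bound on $E_+$.

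First I would observe that it suffices to prove $s_k^R(Y,\lambda) \le c_k(M,\omega)$ for every $R>0$, since $s_k(Y,\lambda) = \lim_{R\to\infty} s_k^R(Y,\lambda)$ by definition. Fix $J_0 \in \mca{J}_R(Y,\lambda)$ and points $x_1,\ldots,x_k \in Y\times[-R,0]$; it then remains to produce $u \in \mca{M}_{J_0}$ through $x_1,\ldots,x_k$ with $E_+(u) \le c_k(M,\omega)+\eta$ for arbitrary $\eta>0$. Let $V \subset M$ denote the image of $Y\times\R_{\le 0}$ under the symplectic embedding, and view each $x_i$ as a point in the interior of $V$. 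I would extend $J_0|_V$ to an $\omega$-compatible almost complex structure $J$ on $M$, cylindrical in a neighbourhood of the contact-type hypersurface $\del V$; if $J_0$ is not already cylindrical just inside $\del V$ from the $V$-side, I first modify $J_0$ in a thin collar $Y\times[-\ep,0]$ and recover the original $J_0$ at the end by letting $\ep\to 0$.

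Next I would stretch the neck along $\del V$: for each $T>0$, inserting a symplectization cylinder of length $T$ at $\del V$ yields $(M_T,\omega_T,J_T)$, symplectomorphic to $(M,\omega)$, in which the $x_i$ still lie in the bottom half $V$. Since $c_k$ is a symplectic invariant, the definition of $c_k(M,\omega) = c_k(M_T,\omega_T)$ provides, for every $\eta>0$ and every $T$, a curve $u_T \in \mca{M}_{J_T}(M_T,\omega_T)$ containing $x_1,\ldots,x_k$ with $E(u_T) \le c_k(M,\omega)+\eta$. Applying SFT compactness as $T\to\infty$ would produce a holomorphic building whose components live in the upper completion of $M\setminus V$, in possibly several intermediate symplectizations $(Y\times\R,J_0)$, and in the bottom completion which is $(Y\times\R,J_0)$, with total energy preserved in the limit. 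Since each $x_i$ is interior to $V$ (and strictly bounded away from $\del V$), it must appear on a bottom-level component, and the union of the bottom-level components through the $x_i$ is an element of $\mca{M}_{J_0}$ passing through $x_1,\ldots,x_k$ whose $E_+$ is bounded by the total energy of the building, hence by $c_k(M,\omega)+\eta$. Letting $\eta\to 0$, and then, if required, sending the modification parameter $\ep\to 0$ via a further SFT compactness argument in the symplectization, delivers the desired bound.

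The hard step will be the SFT compactness applied to $\{u_T\}$: exactly as flagged in the discussion of the monotonicity property of $s_k$, the curves $u_T$ carry no a priori genus bound, so the standard compactness theorem of \cite{BEHWZ} does not apply off the shelf. I would resolve this in the same way as Hutchings, namely by invoking the three-dimensional relative adjunction formula and the asymptotic writhe bound from \cite{Hutchings_revisited} to obtain the genus control needed for SFT compactness to go through. Any residual issues with the modification $J_0 \to J_0'$ are handled by the same class of arguments applied to a sequence in the symplectization alone.
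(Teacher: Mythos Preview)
Your proposal is correct and follows essentially the same approach as the paper, which simply states that the proof ``is parallel to that of the monotonicity property'': neck-stretch along the contact-type hypersurface, extract bottom-level curves in $(Y\times\R,J_0)$ passing through the $x_i$ with $E_+$ bounded by the energy of the closed curve, and handle the lack of an a~priori genus bound via the relative adjunction formula and asymptotic writhe bound from \cite{Hutchings_revisited}. Your write-up fills in exactly the details the paper omits.
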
 

By Lemma \ref{lem_monotonicity_closed}, 
Proposition \ref{prop_slowgrowth_S3} is reduced to show $\lim_{k \to \infty} c_k(\C P^2)/k = 0$. 
This follows from the following inequality: 
\begin{equation}\label{CP2_upperbound} 
c_{d(d+3)/2}(\C P^2) \le d \qquad(\forall d \in \Z_{>0}). 
\end{equation} 

Actually, Hutchings \cite{Hutchings_elementary} explicitly computed $c_k(\C P^2)$ for all $k$: 
\[ 
c_k(\C P^2) = \min \{ d \mid d \in \Z_{>0}, \,  d (d+3)  \ge 2k \}. 
\] 
In particular, the inequality in (\ref{CP2_upperbound}) can be replaced with the equality. 
The proof of (\ref{CP2_upperbound}) in \cite{Hutchings_elementary} uses Taubes's ``Seiberg-Witten $=$ Gromov'' theorem \cite{Taubes_Gromov}. 
Edtmair \cite{Edtmair_elementary} gave a more elementary proof, though it still uses the Gromov--Taubes invariant. 
Here we sketch an even more elementary proof of (\ref{CP2_upperbound}). 

By the Gromov compactness theorem, it is sufficient to show the following. 
For each $k \in\Z_{>0}$, let $S_k(\C P^2)$ denote the set consisting of 
$x=(x_1, \ldots, x_k)$, where $x_1, \ldots, x_k$ are distinct points on $\C P^2$. 

\begin{proposition}\label{prop_degree_d_curves} 
Let $d \in \Z_{>0}$ and $x \in S_{d(d+3)/2}(\C P^2)$. 
For generic $J \in \mca{J}(\C P^2)$, 
there exists $(u, \Sigma) \in \mca{M}_J(\C P^2)$ 
such that $\Sigma$ is a connected Riemann surface of genus $(d-1)(d-2)/2$, 
$u_*[\Sigma] = d [\C P^1]$ and $x_1, \ldots, x_{d(d+3)/2} \in u(\Sigma)$. 
\end{proposition}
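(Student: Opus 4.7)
The plan is to prove Proposition \ref{prop_degree_d_curves} by starting from the standard integrable complex structure $J_0$ on $\C P^2$, where a degree-$d$ curve through $k := d(d+3)/2$ generic points is produced by classical algebraic geometry, and then propagating existence to an arbitrary generic $J$ via a $1$-parameter cobordism whose compactness rests on a Severi-type point-count inequality. At $J_0$, degree-$d$ plane curves form the projectivization of the space of nonzero homogeneous degree-$d$ polynomials, hence a linear system of projective dimension $\binom{d+2}{2} - 1 = d(d+3)/2$. Each incidence $x_i \in C$ is one linear condition, so some degree-$d$ algebraic curve $C_0$ passes through $x_1, \ldots, x_k$. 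After first perturbing the $x_i$ generically (allowed since we only need the conclusion for generic $J$ and generic $x$), Bertini gives $C_0$ smooth irreducible of genus $g := (d-1)(d-2)/2$. Its normal bundle $N_{C_0/\C P^2} = \mca{O}_{C_0}(d)$ has degree $d^2 > 2g - 2 = d(d-3)$, so $H^1(C_0, N_{C_0/\C P^2}) = 0$; this is exactly the surjectivity of the linearized Cauchy--Riemann operator with the $k$-point incidence imposed, making $C_0$ a regular point of the constrained moduli space. The implicit function theorem then yields a smooth $J$-holomorphic curve with the required data for every $J$ in a $C^\infty$-neighborhood of $J_0$.

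To reach an arbitrary generic $J_1 \in \mca{J}(\C P^2)$, choose a generic smooth path $(J_t)_{t \in [0,1]}$ from $J_0$ to $J_1$. Standard transversality arguments make the parametric moduli space
\[
\mca{M} := \bigl\{ (t, [\Sigma, u]) : u \text{ is } J_t\text{-holomorphic},\ \mathrm{genus}(\Sigma) = g,\ u_*[\Sigma] = d[\C P^1],\ x_j \in u(\Sigma)\ \forall j \bigr\}
\]
a smooth $1$-manifold whose fiber over small $t$ is a single, transversely cut-out point by the preceding step. If $\mca{M}$ is compact, its boundary over $t = 1$ is cobordant to a single point and hence nonempty, producing the required curve for $J_1$.

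The hard part is establishing compactness of $\mca{M}$, i.e.\ ruling out bubbling and genus drop in Gromov limits $t_n \to t_\infty$. Such a limit is a stable nodal $J_{t_\infty}$-holomorphic map whose non-constant components have degrees $d_1, \ldots, d_m \ge 1$ with $\sum_i d_i = d$ and whose total arithmetic genus equals $g$. For generic $x$, a simple $J$-holomorphic curve of degree $d_i$ can meet at most $d_i(d_i+3)/2$ of the $x_j$: its moduli space (over all allowable geometric genera, which by adjunction are bounded above by $(d_i-1)(d_i-2)/2$) has complex dimension at most $d_i(d_i+3)/2$, and each incidence is complex codimension $1$. Summing these bounds yields
\[
k \;\le\; \sum_{i=1}^m \frac{d_i(d_i+3)}{2} \;=\; \frac{1}{2}\Bigl(\sum_{i=1}^m d_i^2 + 3d\Bigr) \;\le\; \frac{d^2 + 3d}{2} \;=\; k,
\]
with strict inequality in the last step whenever $m \ge 2$, since $\sum_i d_i^2 < d^2$ once there are at least two positive summands. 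Thus $m = 1$. The only remaining degeneration is that the domain acquires $\delta \ge 1$ nodes while the image stays irreducible of degree $d$; then its geometric genus drops to at most $g - \delta$, placing the image in a moduli stratum of complex dimension at most $k - \delta < k$, which for generic $x$ contains no curve through all of $x_1, \ldots, x_k$. Hence $\mca{M}$ is a compact $1$-manifold with boundary and the cobordism concludes the proof.
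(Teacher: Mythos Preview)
Your argument follows the same strategy as the paper's sketch: anchor at the integrable $J_0$ via the classical fact that $k=d(d+3)/2$ generic points determine a unique smooth degree-$d$ plane curve cut out transversally, then propagate along a generic one-parameter family and exclude Gromov degenerations by a dimension count (your Severi-type inequality $\sum_i d_i^2 < d^2$ for $m\ge 2$, together with the genus-drop estimate, is precisely what underlies the paper's assertion that the broken or multiply-covered strata have virtual dimension at most $-2$).

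The one genuine discrepancy is that the proposition is stated for an \emph{arbitrary} $x\in S_k(\C P^2)$, whereas you first perturb $x$ to be generic and thereafter keep it fixed; your parenthetical that ``generic $x$ suffices'' is true for the intended application to $c_k(\C P^2)$ but does not literally prove the proposition. The paper handles this by also moving the point configuration along a path from a generic $x^0$ to the given $x^1=x$, and running the same compactness argument over a generic path in $S_k(\C P^2)\times\mca{J}(\C P^2)$. Your dimension counts go through verbatim in that setting (the extra real parameter does not change the integer bounds, since each bad stratum already has real codimension at least $2$), so this is a small fix rather than a new idea. One further minor imprecision: $H^1(C_0,N_{C_0/\C P^2})=0$ gives regularity of the \emph{unconstrained} moduli space; regularity of the constrained problem also needs the $k$ incidence conditions to be independent, which is the classical statement that $k$ generic points impose independent conditions on degree-$d$ curves, and is part of what the paper packages into its citation of the ``elementary fact in complex geometry.''
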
 
\begin{proof}[Proof (sketch)] 
Let $k:= d(d+3)/2$. 
For any $J \in \mca{J}(\C P^2)$ and $x \in S_k(\C P^2)$, 
consider the moduli space $\mca{M}_J (x)$ which consists of 
equivalence classes of tuples $(u, \Sigma, p_1, \ldots, p_k)$ such that: 
\begin{itemize} 
\item $(u, \Sigma) \in \mca{M}_J(\C P^2)$ and $u_*[\Sigma]=d[\C P^1]$. 
\item $\Sigma$ is a connected Riemann surface of genus $(d-1)(d-2)/2$. 
\item $u$ is somewhere injective.
\item $p_1, \ldots, p_k \in \Sigma$ such that $u(p_j)=x_j$ for all $1 \le j \le k$. 
\end{itemize} 

Let $J^0$ denote the standard complex structure on $\C P^2$. 
Then, for generic $x^0 \in S_k(\C P^2)$, 
$\mca{M}_{J^0} (x^0)$ consists of a unique element which is cut out transversally. 
This is an elementary fact in complex geometry: see \cite{Edtmair_elementary} Theorem B.3 for a detailed exposition. 

For an arbitrary $x \in S_k(\C P^2)$, 
take a path $(x^t)_{t \in [0,1]}$ on $S_k(\C P^2)$ 
such that $x^0$ is given as above and $x^1=x$. 
It is sufficient to show that, for a generic path $(J^t)_{t \in [0,1]}$ on $\mca{J}(\C P^2)$ 
such that $J^0$ is given as above, $\mca{M}_{J^1}(x^1) \ne \emptyset$. 

For a generic path $(J^t)_{t \in [0,1]}$, 
the moduli space $\tilde{\mca{M}}:= \bigcup_{t \in [0,1]} \mca{M}_{J^t} (x^t)$
is a $1$-dimensional manifold with boundary $\mca{M}_{J^0}(x^0) \sqcup \mca{M}_{J^1}(x^1)$. 
If $\tilde{\mca{M}}$ is not compact, then for some $t \in [0,1]$ 
there exists a broken or multiply-covered $J^t$-holomorphic curve which passes through $x^t_1, \ldots, x^t_k$. 
However, virtual dimensions of these broken or multiply-covered curves (with $x^t$ and $J^t$ fixed) are at most $-2$, 
thus these curves cannot appear for a generic path $(J^t)_{t \in [0,1]}$. 
For such a path $\tilde{\mca{M}}$ is compact, thus the cardinality of 
$\mca{M}_{J^1}(x^1)$ is odd, in particular nonzero. 
\end{proof} 

\begin{remark} 
A result essentially equivalent to 
Proposition \ref{prop_degree_d_curves} 
was already stated by Gromov \cite{Gromov_pseudoholomorphic}; 
see Section 0.2.$\mathrm{B}$ and a sketched proof in Section 2.4.$\mathrm{B}''_1$. 
\end{remark} 

\subsection{Spectral invariants for area-preserving maps} 

Periodic Floer homology (PFH) is an analogue of ECH for (mapping tori of) area-preserving maps of closed surfaces. 
Spectral invariants defined from PFH satisfy 
analogues of the ECH Weyl law; 
see \cite{CGPZ} Theorem 1.5 and \cite{Edtmair_Hutchings} Theorem 8.1, 
as well as \cite{CGPZ} Section 1.1.4 for the comparison of the two results. 
An analogue of the elementary spectral invariants for area-preserving maps 
is investigated in \cite{Edtmair_elementary}. 
Note that applications to closing lemmas (\cite{Edtmair_elementary} Section 1.6) 
are limited to the case of Hamiltonian diffeomorphisms.

\section{Generic equidistribution of periodic orbits} 

As we have explained, 
local sensitivity of spectral invariants is sufficient to prove the strong closing lemma (and the $C^\infty$-generic density of perioidic orbits). 
However, using the Weyl law we can prove the following stronger result. 

\begin{theorem}[\cite{Irie_JSG}]\label{thm_equidistribution} 
Let $(Y,\xi)$ be a closed contact $3$-manifold. 
For $C^\infty$-generic $\lambda \in \Lambda(Y,\xi)$, 
there exists a sequence $(C_N)_{N \ge 1}$ of $1$-dimensoional currents on $Y$ such that: 
\begin{itemize} 
\item[(i):] Each $C_N$ is a finite sum $C_N= \sum_j a_j\gamma_j$, where $a_j \in \R_{>0}$ and $\gamma_j \in \mca{P}_\emb(Y,\lambda)$. 
\item[(ii):]  $(C_N)_N$ weakly converges to $d\lambda$. Namely, for any $\alpha \in \Omega^1(Y)$, $\lim_{N \to \infty} C_N(\alpha) = \int_Y \alpha \wedge d\lambda$.
\end{itemize} 
\end{theorem}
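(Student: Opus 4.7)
The plan is to deduce Theorem~\ref{thm_equidistribution} from the ECH Weyl law (Theorem~\ref{thm_Weyl_law_ECH}) by associating a normalized current to each ECH spectral invariant and then analyzing its first variation. Fix $\Gamma \in H_1(Y:\Z)$ with $d_\Gamma = 0$ and a sequence of classes $(\sigma_k)_{k \ge 1}$ in $\ech(Y,\xi:\Gamma)$ satisfying $I(\sigma_{k+1},\sigma_k) = 2$, as in Theorem~\ref{thm_Weyl_law_ECH}. For $\lambda \in \Lambda_\nondeg(Y,\xi)$, I select an ECH generator $x_N(\lambda) = \{(m_i,\gamma_i)\}_i$ realizing the infimum in the definition of $s_{\sigma_N}(\lambda)$, and define
\[
C_N := \frac{s_{\sigma_N}(\lambda)}{2N} \sum_i m_i \gamma_i,
\]
a positive finite sum of embedded periodic orbits, so condition (i) is immediate. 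The Weyl law yields convergence on $\lambda$ directly:
\[
C_N(\lambda) = \frac{s_{\sigma_N}(\lambda)^2}{2N} \longrightarrow \vol(Y,\lambda) = \int_Y \lambda \wedge d\lambda.
\]

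To test $C_N$ against a general $\alpha \in \Omega^1(Y)$, I decompose $\alpha = f\lambda + \beta$ with $f = \alpha(R_\lambda)$ and $\beta(R_\lambda) = 0$. Since each $\gamma_i$ is tangent to $R_\lambda$, the current $C_N$ annihilates $\beta$; and since $i_{R_\lambda} d\lambda = 0$, the $3$-form $\beta \wedge d\lambda$ vanishes identically on $Y$. Hence the problem reduces to proving $C_N(f\lambda) \to \int_Y f\lambda \wedge d\lambda$ for every $f \in C^\infty(Y)$.

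For $f \ge 0$, I consider the conformal family $\lambda_t := e^{tf}\lambda$ and the functions $V_k(t) := s_{\sigma_k}(\lambda_t)^2/(2k)$ and $V(t) := \vol(Y,\lambda_t) = \int_Y e^{2tf}\lambda \wedge d\lambda$. By the Weyl law, $V_k \to V$ pointwise; monotonicity and conformality make each $V_k$ non-decreasing in $t$ and uniformly locally Lipschitz; and $V$ is smooth with $V'(0) = 2\int_Y f\lambda \wedge d\lambda$. An envelope computation, using that the first-order change in the action of a cycle $\{(m_i,\gamma_i)\}$ under $\lambda \mapsto e^{tf}\lambda$ equals $t\sum_i m_i \int_{\gamma_i} f\lambda + O(t^2)$, should identify the one-sided derivative of $V_k$, wherever it exists, with $2\, C_N(\lambda_t)(f\lambda_t)$. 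Standard real analysis for pointwise-convergent monotone Lipschitz sequences then gives $C_N(\lambda_t)(f\lambda_t) \to \int_Y f\lambda_t \wedge d\lambda_t$ at a.e.\ $t$. Applying this along rays for a countable $C^\infty$-dense family $\{f_j\} \subset C^\infty(Y, \R_{\ge 0})$, and combining the a.e.\ statements via the $C^0$-continuity of spectral invariants through a Baire $G_\delta$-construction, promotes the convergence to a residual subset of $\Lambda(Y,\xi)$. The uniform mass bound $|C_N(f\lambda)| \le \|f\|_{C^0} \cdot C_N(\lambda) = O(1)$ extends the convergence from the countable family $\{f_j\}$ to every $f \in C^\infty(Y)$, and decomposing $f = f_+ - f_-$ drops the positivity restriction.

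The main obstacle will be justifying the envelope identification $V_k'(t) = 2\, C_N(\lambda_t)(f\lambda_t)$: the ECH generator $x_k(\lambda)$ can jump discontinuously as $\lambda$ varies, so one cannot simply differentiate cycle-by-cycle. The intended workaround uses that $V_k$ is monotone Lipschitz (hence a.e.\ differentiable in $t$) together with action-filtered ECH cobordism maps, which sandwich the one-sided derivatives of $V_k$ between upper and lower cycle-level expressions whose difference vanishes in the $k \to \infty$ limit by the Weyl law. A secondary difficulty is promoting a.e.-convergence along finite-dimensional rays to Baire-genericity on the infinite-dimensional space $\Lambda(Y,\xi)$, which must be handled by a careful $G_\delta$-selection exploiting the $C^0$-continuity of spectral invariants.
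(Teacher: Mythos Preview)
Your approach has a genuine gap at the step you label ``standard real analysis for pointwise-convergent monotone Lipschitz sequences''. The implicit claim---that if $V_k \to V$ uniformly with each $V_k$ monotone and uniformly Lipschitz, then $V_k'(t) \to V'(t)$ for a.e.\ $t$---is false. Take $V_k(t) = t + k^{-1}\sin(kt)$ on $[0,1]$: these functions are nondecreasing (since $V_k' = 1 + \cos(kt) \ge 0$), uniformly $2$-Lipschitz, and converge uniformly to $V(t)=t$, yet $V_k'(t) = 1 + \cos(kt)$ fails to converge for a.e.\ $t$. Uniform convergence of Lipschitz functions yields only weak-$*$ convergence of the derivatives, which says nothing about the specific currents $C_N(\lambda_t)$ at individual parameter values. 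Consequently, even if one grants your envelope identification $V_k'(t) = 2\,C_N(\lambda_t)(f\lambda_t)$ wherever the derivative exists, the one-ray argument produces no a.e.\ statement, and the subsequent Baire promotion has nothing to promote.

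This is exactly the difficulty the paper (following \cite{MNS}) is designed to circumvent, and the paper says so explicitly: the non-differentiability of the min-max values $s_{\sigma_k}$ is the whole point. Instead of one-dimensional rays, the proof in \cite{Irie_JSG} works for each $N$ with an $N$-parameter family $(\lambda_\tau)_{\tau \in [0,1]^N}$, forms $e_k(\tau) := s_{\sigma_k}(\lambda_\tau)/\sqrt{k} - \sqrt{2\vol(\lambda_\tau)}$, and invokes \cite{MNS} Lemma~3, an elementary but non-obvious result about $C^0$-small Lipschitz functions on $[0,1]^N$. That lemma locates, for suitable large $k$, a parameter $\tau_k$ at which the relevant difference quotients in all $N$ coordinate directions are simultaneously small, and the current $C_N$ is then built from the orbit set realizing $s_{\sigma_k}(\lambda_{\tau_k})$ at that \emph{selected} parameter. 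Such a simultaneous conclusion is simply unavailable along a single ray, and cannot be manufactured after the fact by intersecting hypothetical full-measure sets over countably many separate rays.
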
 

Several remarks on Theorem \ref{thm_equidistribution} are in order. 

\begin{itemize} 
\item The condition (ii) implies that $\bigcup_N \supp(C_N)$ is dense in $Y$. 
In particular, Theorem \ref{thm_equidistribution} implies the generic density of periodic oribits.
\item Theorem \ref{thm_equidistribution} is inspired by the result by Marques-Neves-Song \cite{MNS} for minimal hypersurfaces. 
Also, the proof in \cite{Irie_JSG} closely follows the argument in \cite{MNS}. 
\item Each $C_N$ may consist of (finitely) many periodic orbits. 
Indeed, in the situation where an elliptic periodic orbit is surrounded by KAM tori, 
$C_N$ with large $N$ consists of many orbits, since each periodic orbit cannot cross KAM tori. 
\item Colin-Dehornoy-Hryniewicz-Rechtman \cite{CDHR} proved that 
if a sequence $(C_N)_N$ satisfying the conditions (i) and (ii) exists 
then the flow of $R_\lambda$ admits a Birkhoff section (see \cite{CDHR} for the definition). 
Hence Theorem \ref{thm_equidistribution} implies that a $C^\infty$-generic contact form admits a Birkhoff section. 
\item An analogue of Theorem \ref{thm_equidistribution} for area-preserving surface maps is proved by Prasad \cite{Prasad_equidistribution}. 
It would be interesting to see whether this result has applications in surface dynamics. 
\end{itemize} 

Let us briefly discuss the key idea in the proof of Theorem \ref{thm_equidistribution}. 
For each $N \in \Z_{>0}$
we consider an appropriate $C^\infty$-family of contact forms $(\lambda_\tau)_{\tau \in [0,1]^N}$ in $\Lambda(Y, \xi)$. 
Let $(\sigma_k)_k$ be a sequence of ECH homology classes as in the Weyl law, 
and define functions 
$(e_k)_k$ on $[0,1]^N$ by $e_k(\tau):= s_{\sigma_k}(\lambda_\tau)/\sqrt{k} - \sqrt{2\vol(\lambda_\tau)}$. 
Using the Weyl law, we can show $\lim_{k \to \infty} \|e_k\|_{C^0}=0$.
If each $e_k$ were $C^1$ 
(equivalently, if $s_{\sigma_k}(\lambda_\tau)$ were $C^1$ in $\tau$), 
then for large $k$ 
one can find $\tau_k \in [0,1]^N$ such that 
$|\nabla e_k(\tau_k)|$ is small. 
Taking $C_N$ to be the sum of Reeb orbits representing $s_{\sigma_k}(\lambda_{\tau_k})$, 
the sequence $(C_N)_N$ satisfies the required conditions. 
However, the assumption that $s_{\sigma_k}(\lambda_\tau)$ is $C^1$ is unrealistic, 
since each $s_{\sigma_k}$ is defined as a min-max value, 
and it is easy to see that an analogous statement for finite-dimensional toy models is false. 
The key idea, due to \cite{MNS}, is to resolve this issue using \cite{MNS} Lemma~3, 
which is an elementary result on $C^0$-small Lipschitz functions on $[0,1]^N$. 

\section{Strong closing lemmas in high-dimensional Hamiltonian dynamics} 

It is natural to ask whether strong closing lemmas hold in high dimensions, 
i.e. Reeb flows on contact manifolds of dimension $\ge 5$ or 
rational symplectic/Hamiltonian diffeomorphisms of symplectic manifolds of dimension $\ge 4$. 
This question is wide open. 
We explain why an attempt to prove strong closing lemmas using elementary spectral invariants fails in high dimensions (Section 7.1), 
and review recent results 
establishing strong closing lemmas for very special high-dimensional Hamiltonian systems (Section 7.2). 

\subsection{Elementary spectral invariants in high dimensions} 

In this subsection we define elementary spectral invariants 
for contact manifolds of all dimensions, 
and show that the slow growth property fails in high dimensions. 
Details of the arguments in this subsection will appear on \cite{Irie_elementary}. 

Let $n \in \Z_{>0}$ 
and let $(Y,\xi)$ be a closed contact manifold of dimension $2n-1$. 
Let $\lambda \in \Lambda_{\nondeg} (Y, \xi)$. 
For any $R \in \R_{>0}$, 
$J \in \mca{J}_R(Y,\lambda)$, and $g \in \Z_{\ge 0}$, 
let $\mca{M}_{J, \inj}^{\le g}$ 
denote the set consisting of 
$(\Sigma, P^+, P^-, u) \in \mca{M}_{J, \inj}$ 
such that the genus of $\Sigma$ is at most $g$. 
Note that $\mca{J}_R(Y,\lambda)$ and $\mca{M}_{J, \inj}$ make sense in any dimension. 
Let $\mca{M}_J^{\le g}$ 
denote the set consisting of finite sets $u=\{u_i\}_i$ 
such that $u_i$ are distinct elements of $\mca{M}_{J, \inj}^{\le g}$. 
For each $k \in \Z_{>0}$, we define 
\[ 
s^{R, \le g}_k(Y, \lambda)
:= \sup_{\substack{ J \in \mca{J}_R(Y,\lambda)\\ x_1, \ldots, x_k \in Y \times [-R, 0] }} \bigg( 
\inf_{\substack{u \in \mca{M}^{\le g}_J \\ x_1, \ldots, x_k  \in \image(u) }} E_+(u) \bigg). 
\] 
Note that $E_+(u)$ and $\image(u)$ make sense in any dimension. 

$s^{R, \le g}_k$ is nonincreasing on $g$ and nondecreasing on $R$, thus we can define 
the $k$-th elementary spectral invariant
\[
\bar{s}_k(Y,\lambda):= \lim_{R \to \infty} \bigg( \lim_{g \to \infty} s^{R, \le g}_k(Y, \lambda) \bigg). 
\] 
We also set $\bar{s}_0(Y, \lambda):= 0$. 

\begin{remark} 
It turns out that $\bar{s}_k(Y,\lambda)=s_k(Y,\lambda)$ when $\dim Y=3$. 
The inequality $s_k \le \bar{s}_k$ is straightforward from the definition. 
The opposite inequality $s_k \ge \bar{s}_k$ is nontrivial, 
and one can prove this by an argument similar to the proof of the monotonicity of $s_k$, 
using the relative adjunction formula and the writhe bound. 
\end{remark} 

The invariants $\bar{s}_k$ 
satisfy the properties
conformality, increasing, subadditivity, spectrality, and monotonicity. 
Properties other than the monotonicity are straightforward. 
To prove the monotonicity, 
it is sufficient to prove the monotonicity of 
$s^{R, \le g}_k$ for all $k$, $R$, and $g$. 
The proof is a standard application of the neck-stretching argument, 
with the following two key observations: 
(1) one can apply the SFT-compactness theorem \cite{BEHWZ} since we have a priori genus bound, 
(2) if a connected surface $S$ is embedded into another connected surface $S'$, then the genus of $S$ is not larger than the genus of $S'$. 

One can define the invariants $\bar{s}_k$ for all (possibly degenerate) contact forms so that the monotonicity holds. 
One can also define elementary capacities $(\bar{c}_k)_{k \ge 0}$ for all Liouville domains in a similar manner. 
Now we show that the slow growth property
---which is the key to prove the strong closing lemma as explained in Section 5.3---
fails in high dimensions. 

\begin{theorem}[\cite{Irie_elementary}]\label{thm_growth_in_high_dimension} 
Let $n \in \Z_{\ge 3}$. 
Then $\lim_{k \to \infty} \bar{s}_k(Y,\lambda)/k>0$ for any $2n-1$-dimensional closed contact manifold $(Y,\xi)$ and $\lambda \in \Lambda(Y,\xi)$. 
Moreover, $\lim_{k \to \infty} \bar{c}_k(X,\lambda)/k>0$ for any $2n$-dimensional Liouville domain $(X,\lambda)$. 
\end{theorem}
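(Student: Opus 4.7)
The plan is a Fredholm-index argument exploiting the sign of the topological coefficient $n-3$ in the SFT index formula when $n \ge 3$. I would fix a nondegenerate $\lambda$, a generic $J \in \mca{J}_R(Y,\lambda)$ (for fixed $R>0$), and generic points $x_1, \ldots, x_k \in Y \times [-R, 0]$. For any $u = \{u_1, \ldots, u_l\} \in \mca{M}_J$ with $x_1, \ldots, x_k \in \image(u)$, partition the $x_j$ by component: each somewhere injective $u_i$ passes through $k_i \ge 1$ of the $x_j$ (after discarding redundant components), with $\sum k_i = k$. Genericity of $x$ together with transversality of the moduli spaces forces $\mathrm{ind}(u_i) \ge (2n-2)k_i$. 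The SFT index formula reads
\[
\mathrm{ind}(u_i) \;=\; (n-3)(2 - 2g_i - s_i) \;+\; 2 c_\tau(u_i) \;+\; \sum_a \mathrm{CZ}_\tau(\gamma^+_a) \;-\; \sum_b \mathrm{CZ}_\tau(\gamma^-_b),
\]
where $g_i$ is the genus and $s_i$ is the number of ends of $u_i$. For $n \ge 3$ the first term is bounded above by $2(n-3)$ (and is non-positive once $2g_i + s_i \ge 2$), and for nondegenerate $\lambda$ the Conley--Zehnder terms and the $c_\tau$ term are bounded by a linear function of $E_+(u_i) + E_-(u_i)$, yielding $\mathrm{ind}(u_i) \le C_1(E_+(u_i) + E_-(u_i)) + C_2$.

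To control $E_-$, I would use a Stokes argument in the middle region $Y \times [-R,0]$. Setting $B_r(u_i) := \int_{u_i \cap (Y \times \{r\})} u_i^*\lambda$, the non-negativity $u_i^* d(e^r\lambda) \ge 0$ and Stokes give $B_0(u_i) \ge e^{-R} B_{-R}(u_i)$. Combining with $E_+(u_i) \ge B_0(u_i)$ and $B_{-R}(u_i) \ge E_-(u_i)$ (both from $u_i^* d\lambda \ge 0$ on the symplectization regions $r \ge 0$ and $r \le -R$) yields $E_-(u_i) \le e^R E_+(u_i)$. Plugging this together with $\mathrm{ind}(u_i) \ge (2n-2)k_i$ into the index bound gives
\[
E_+(u_i) \;\ge\; c\, k_i - c'
\]
for constants $c, c' > 0$ depending only on $(Y, \lambda, R)$. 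Combining with the trivial lower bound $E_+(u_i) \ge T_{\min}(\lambda)$ (the minimum Reeb period) by a case split on the size of $k_i$, one obtains $E_+(u_i) \ge c'' k_i$ for a uniform $c'' > 0$, so $E_+(u) = \sum_i E_+(u_i) \ge c''\, k$. Taking sup over $(J, x)$ yields $s^{R, \le g}_k(Y,\lambda) \ge c''\, k$ for every $g$, and hence $\bar{s}_k(Y,\lambda) \ge c''\, k$. The Liouville case is identical, with $E_-(u) = 0$ simplifying the derivation.

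The main obstacle is arranging generic transversality for moduli spaces of $J$-holomorphic curves of all genera and numbers of ends simultaneously (a single $J$ need not be regular for every $g$), together with establishing the uniform linear Conley--Zehnder bound $|\mathrm{CZ}_\tau(\gamma)| \le C\, T_\gamma$ across all iterates of Reeb orbits of $\lambda$ for a suitable trivialization $\tau$. The role of $n \ge 3$ is essential: when $n = 2$ the coefficient $n - 3 = -1$ makes the topological term \emph{grow} with $g$ and $s$, so high-genus or many-ended curves can pass through arbitrarily many points with bounded $E_+$; this is exactly why slow growth holds in contact dimension three and why the strong closing lemmas in low dimensions are a fundamentally low-dimensional phenomenon.
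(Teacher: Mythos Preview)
Your core mechanism---a Fredholm index count exploiting the sign of $n-3$ in the SFT index formula---is exactly what the paper uses. But the paper takes a different and essential preliminary step: rather than arguing directly on an arbitrary $(Y,\lambda)$ or $(X,\lambda)$, it first invokes monotonicity together with Darboux's theorem to reduce both assertions to the single lower bound $\lim_{k}\bar{c}_k(E_a,\lambda_n)/k>0$ for some irrational ellipsoid $E_a\subset\R^{2n}$, and carries out the virtual dimension computation only there.

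This reduction is precisely what closes the gap in your direct argument. Your claimed bound on $c_\tau(u_i)$ in terms of $E_+(u_i)+E_-(u_i)$ is not valid in general: if $c_1(\xi)$ (respectively $c_1(TX)$) pairs nontrivially with $H_2(Y;\Z)$ (respectively $H_2(X;\Z)$), then for fixed asymptotic orbits there are infinitely many relative homology classes, with $c_\tau$ unbounded while $E_\pm$ stays fixed. Your upper bound $\mathrm{ind}(u_i)\le C_1(E_++E_-)+C_2$ then collapses, and nothing in your argument excludes somewhere-injective curves of small $E_+$ but large $c_\tau$ passing through many generic points. On the ellipsoid this issue vanishes: $c_\tau\equiv 0$ in the global trivialization of $\R^{2n}$, there are no negative ends, and the Conley--Zehnder indices of the $n$ simple orbits and all their iterates are explicitly linear in action---so your computation goes through verbatim in that model.

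Two minor points. The transversality concern you flag is not a real obstacle: for somewhere-injective curves a single generic $J$ is regular for all genera and end configurations simultaneously, by the standard Baire argument over the countably many topological types. And your Stokes estimate $E_-(u_i)\le e^R E_+(u_i)$, as well as the uniform linear bound $|\mathrm{CZ}_\tau(\gamma)|\le C\,T_\gamma+C'$ on a compact nondegenerate contact manifold, are both correct; the only genuinely missing ingredient is control of $c_\tau$, and that is exactly what the reduction to ellipsoids buys.
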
 
\begin{proof}[Proof (sketch)]
By the monotonicity, 
combined with Darboux's theorem, 
it is sufficient to show $\lim_{k \to \infty} \bar{c}_k(E_a, \lambda_n)/k>0$ for some $a \in A_n$ (see Example \ref{ex_ellipsoids}). 
If $a_i/a_j \not\in \Q$ for all $i<j$, this follows from 
computations of virtual dimensions of moduli spaces of holomorphic curves asymptotic to periodic Reeb orbits on positive ends. 
Details will appear on \cite{Irie_elementary}. 
\end{proof}

Theorem \ref{thm_growth_in_high_dimension}  implies that the argument in Section 5.3 does not extend to contact manifolds of dimension $\ge 5$. 
Nevertheless, we could still try to prove some weak versions of strong closing property, which we discuss here. 
Let $(Y, \xi)$ be a closed contact manifold.
For any $\alpha \in H_*(Y) \setminus \{0\}$, 
we say $\lambda \in \Lambda(Y, \xi)$ satisfies 
\textit{$\alpha$-strong closing property} 
if the following holds: 
\begin{quote} 
Let $C$ be a (singular) cycle on $Y$ representing $\alpha$. 
For any  
$h \in C^\infty(Y, \R_{\ge 0})$ such that $\min h|_{\image(C)}>0$, 
there exist $t \in [0,1]$ and $\gamma \in \mca{P}(Y, e^{th}\lambda)$ 
such that $\image(\gamma) \cap \supp (h) \ne \emptyset$. 
\end{quote} 

Let $[\mathrm{pt}] \in H_0(Y)$ denote the point class. 
Then $[\mathrm{pt}]$-strong closing property is equivalent to the strong closing property. 
In the following three cases, 
every $\lambda \in \Lambda(Y,\xi)$ satisfies $\alpha$-strong closing property by the obvious reasons:
\begin{itemize} 
\item[(i):] $\dim Y \le 3$ and $\alpha$ is arbitrary. 
\item[(ii):] $\alpha = [Y]$ and the Weinstein conjecture for $(Y,\xi)$ is verified. 
\item[(iii):] $|\alpha|=\dim Y - 1$, there exists $\beta \in H_1(Y)$ such that $\alpha \cdot  \beta \ne 0$  
and it is verified that  for any $\lambda \in \Lambda(Y, \xi)$ there exist $\gamma_1,\ldots, \gamma_j  \in \mca{P}(Y,\lambda)$
such that $[\gamma_1] + \cdots + [\gamma_j]=\beta$. 
\end{itemize} 

Apart from these three cases, establishing the $\alpha$-strong closing property seems to be highly nontrivial. 
As we cannot formulate any reasonable conjecture, we just propose the following problem. 

\begin{prob} 
Find a closed contact manifold $(Y, \xi)$ and $\alpha \in H_*(Y) \setminus \{0\}$ 
such that $\dim Y \ge \max\{5, |\alpha|+2\}$ and 
every $\lambda \in \Lambda(Y, \xi)$ satisfies $\alpha$-strong closing property. 
\end{prob} 

In another direction, we could try to prove strong closing property for some specific contact forms. 
In the next subsection we discuss recent progress in this direction. 

\subsection{Near-periodic Hamiltonian systems} 

We explain recent results 
which claim that ``nearly periodic'' Hamiltonian systems satisfy strong closing property. 
Let us first discuss the case of Reeb flows. 

\begin{definition}[\cite{Chaidez_Tanny} Definition 5.18] 
Let $Y$ be a closed manifold and $\lambda$ be a contact form on $Y$. 
$\lambda$ is called $T$-periodic (where $T \in \R_{>0}$) if 
$\ph^T_\lambda = \id_Y$. 
$\lambda$ is called \textit{Hofer near-periodic} 
if there exist sequences
$(h_i)_{i \ge 1}$ in $C^\infty(Y)$ and $(T_i)_{i \ge 1}$ in $\R_{>0}$ 
such that, 
for each $i$ the contact form $\lambda_i:= e^{h_i} \lambda$ is $T_i$-periodic 
and $\lim_{i \to \infty} \| h_i \|_{C^0} \cdot T_i = 0$. 
\end{definition} 

\begin{example} 
The canonical contact form on the boundary of any symplectic ellipsoid (Example \ref{ex_ellipsoids}) is Hofer near-periodic, 
as proved by Chaidez--Datta--Prasad--Tanny \cite{CDPT}. The proof uses Dirichlet approximation theorem. 
\end{example} 

\begin{example} 
Suppose $\lambda$ is a contact form on a closed manifold $Y$ such that 
$R_\lambda$ generates a free $S^1$-action, 
and suppose that the quotient $B:=Y/S^1$ admits a Hamiltonian $S^1$-action generated by a Morse function on $B$. 
The basic example is that $Y$ is the unit sphere cotangent bundle of $S^n$ with the round metric and $n \in \Z_{\ge 2}$. 
In this setting, Albers-Geiges-Zehmisch \cite{AGZ} produced a family of contact forms on $Y$, 
generalizing Katok's construction of Finsler metrics on $S^n$ with only finitely many closed geodesics \cite{Katok_finsler}. 
These contact forms are Hofer near-periodic, as proved in \cite{Chaidez_Tanny} Lemma 5.22. 
\end{example} 

Now we have the following theorem. 

\begin{theorem}[Chaidez--Tanny \cite{Chaidez_Tanny}]\label{thm_CT} 
Assume the existence of Gromov-Witten theory and SFT neck stretching theorem for symplectic orbifolds 
(see \cite{Chaidez_Tanny} Assumption 1.3). 
Then, any Hofer near-periodic contact form satisfies the strong closing property.
\end{theorem}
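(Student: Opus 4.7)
The plan is to derive the strong closing property for a Hofer near-periodic contact form $\lambda$ from an arbitrarily small spectral-gap estimate
\[
  \inf_k \bigl( \bar{s}_{k+1}(Y,\lambda) - \bar{s}_k(Y,\lambda) \bigr) = 0,
\]
where $\bar{s}_k$ denotes the high-dimensional elementary spectral invariants of Section~7.1. Such a gap estimate implies local sensitivity by a higher-dimensional analogue of Corollary~\ref{cor_spectral_gap_criterion} (the ball-inserting argument of Lemma~\ref{lem_ball_inserting} generalizes verbatim, once the orbifold-aware monotonicity of $\bar{s}_k$ assumed in Assumption~1.3 of \cite{Chaidez_Tanny} is in hand), and hence gives the strong closing property by Lemma~\ref{lem_local_sensitivity}. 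Since the slow growth property fails generically in high dimensions (Theorem~\ref{thm_growth_in_high_dimension}), the key new input is that the periodic approximants $\lambda_i = e^{h_i}\lambda$ admit a substitute for slow growth.

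First I would establish a sublinear upper bound
\[
  \bar{s}_k(Y, \lambda_i) \le C(Y,\xi) \cdot k^{1/(n-1)} \cdot T_i
\]
for each $T_i$-periodic $\lambda_i$. The Reeb flow of $\lambda_i$ generates a locally free $S^1$-action on $Y$ whose quotient $B_i := Y/S^1$ is a closed symplectic orbifold of dimension $2n-2$; this orbifold plays the role that $\C P^{n-1}$ plays for the standard contact sphere. An orbifold analogue of Lemma~\ref{lem_monotonicity_closed} bounds $\bar{s}_k(Y, \lambda_i)$ by the orbifold elementary capacity of a symplectic compactification of $B_i$, which is in turn controlled by an orbifold version of the degree-$d$ curve count of Proposition~\ref{prop_degree_d_curves} --- both ingredients drawing on the Gromov--Witten package of Assumption~1.3. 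Since spectrality forces $\bar{s}_k(Y,\lambda_i) \in T_i \Z_{\ge 0}$, a pigeonhole on $\{0,1,\ldots,K\}$ then produces, for each large $K$, an index $k \le K$ with
\[
  \bar{s}_{k+1}(Y, \lambda_i) - \bar{s}_k(Y, \lambda_i) = 0 \quad \text{and} \quad \bar{s}_{k+1}(Y, \lambda_i) \le C \cdot K^{1/(n-1)} \cdot T_i.
\]

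Second, I would transfer this vanishing $\lambda_i$-gap back to a small $\lambda$-gap via the Lipschitz estimate
\[
  |s(e^{h_i}\lambda) - s(\lambda)| \le \bigl( e^{\|h_i\|_{C^0}} - 1 \bigr) \cdot \max\bigl\{ s(\lambda), s(e^{h_i}\lambda) \bigr\},
\]
valid for any spectral invariant by conformality and monotonicity. Applied to $\bar{s}_k$ and $\bar{s}_{k+1}$ at the index above,
\[
  \bar{s}_{k+1}(Y, \lambda) - \bar{s}_k(Y, \lambda) \le 2 \bigl( e^{\|h_i\|_{C^0}} - 1\bigr) \cdot C K^{1/(n-1)} T_i \le C' \cdot \|h_i\|_{C^0} T_i \cdot K^{1/(n-1)}.
\]
Given any $\ep > 0$, one first fixes $K$ large enough to invoke the pigeonhole, then uses the Hofer near-periodic hypothesis $\|h_i\|_{C^0} \cdot T_i \to 0$ to choose $i$ so that the right-hand side is below $\ep$. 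Letting $\ep \to 0$ yields the required spectral gap for $\lambda$.

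The main obstacle is the orbifold step one: constructing the symplectic-orbifold compactification of the reduced space $B_i$, proving SFT-compactness and the monotonicity of $\bar{s}_k$ through neck-stretchings that cross orbifold strata, and verifying that the orbifold Gromov--Witten invariant which counts curves through the prescribed $k$ points is nonzero (the substitute for Proposition~\ref{prop_degree_d_curves}). Transversality on singular strata and the identification of the relevant orbifold cohomology class are delicate; these are precisely the issues that Assumption~1.3 of \cite{Chaidez_Tanny} is set up to package. Once those are granted, the argument above is a quantitative analogue, via periodic approximation, of the elementary $S^3$-proof presented in Section~5.4.
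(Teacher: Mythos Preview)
Your step~1 contains a genuine error: the sublinear bound $\bar{s}_k(Y,\lambda_i) \le C(Y,\xi)\, k^{1/(n-1)}\, T_i$ is directly contradicted by Theorem~\ref{thm_growth_in_high_dimension}, which asserts $\lim_{k\to\infty} \bar{s}_k(Y,\lambda)/k > 0$ for \emph{every} contact form on a closed contact manifold of dimension $\ge 5$, including each periodic approximant $\lambda_i$. You cite this theorem yourself as the reason the na\"{\i}ve slow-growth argument fails, and then posit that the periodic $\lambda_i$ ``admit a substitute for slow growth''; but Theorem~\ref{thm_growth_in_high_dimension} makes no exception for periodic forms, so no such substitute exists for the invariants $\bar{s}_k$. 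Concretely, the analogy with Proposition~\ref{prop_degree_d_curves} breaks down: in $\C P^2$ a smooth degree-$d$ curve has genus roughly $d^2/2$ and meets roughly $d^2/2$ generic points, yielding $c_k \sim \sqrt{k}$; in a closed symplectic manifold of real dimension $2n \ge 6$ the index formula shows that a curve of energy $d$ (of any genus) meets only $O(d)$ generic points, so the same construction yields at best $c_k = O(k)$. This linear behaviour is precisely the mechanism behind Theorem~\ref{thm_growth_in_high_dimension}, and no orbifold refinement of the target changes it. (As a secondary matter, spectrality gives $\bar{s}_k(Y,\lambda_i) \in \mca{A}_+(Y,\lambda_i)$, which for a merely locally free $S^1$-action lies in $(T_i/M_i)\Z_{\ge 0}$ rather than $T_i\Z_{\ge 0}$; this further complicates the pigeonhole.)

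The paper's account of the Chaidez--Tanny argument does share your overall architecture --- establish a gap-vanishing statement for periodic forms via orbifold Gromov--Witten nonvanishing, then transfer to $\lambda$ by Hofer-type continuity --- but it does \emph{not} run through the consecutive differences $\bar{s}_{k+1} - \bar{s}_k$. It uses instead a separate family of invariants, the ESFT spectral gaps of \cite{Chaidez_Tanny}, built directly as gap-type quantities (playing the role of $s_{k+1}-s_k$ in Corollary~\ref{cor_spectral_gap_criterion} without being literal differences of a single monotone sequence). Their infimum is shown to vanish for Hofer near-periodic forms (\cite{Chaidez_Tanny} Theorem~7), and Assumption~1.3 enters exactly in the orbifold curve-counting step that furnishes the required upper bounds. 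So your high-level strategy is aligned with the correct one, but the invariants $\bar{s}_k$ are the wrong vehicle: the linear-growth obstruction of Theorem~\ref{thm_growth_in_high_dimension} is exactly what the ESFT spectral gaps are engineered to bypass.
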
 

This result follows from \cite{Chaidez_Tanny} Theorems 5 and 7. 
These results are formulated using 
ESFT (elementary SFT) spectral gaps, which are introduced in \cite{Chaidez_Tanny}. 
Each ESFT spectral gap is an invariant of contact forms taking values in $[0,\infty]$, 
defined similarly to elementary spectral invariants. 
One might think them as ``relative'' spectral invariants in the sense that they play a role similar to $s_{k+1}-s_k$ in Corollary \ref{cor_spectral_gap_criterion}. 
\cite{Chaidez_Tanny} Theorem 5 is an analogue of Corollary \ref{cor_spectral_gap_criterion}, 
and it claims that if the infimum of ESFT spectral gaps of a contact form is zero, then the contact form satisfies the strong closing property. 
\cite{Chaidez_Tanny} Theorem 7 claims that 
for any Hofer near-periodic contact form, the infimum of ESFT spectral gaps is zero. 
To prove this we need upper bounds of ESFT spectral gaps, 
which follow from nonvanishing of Gromov-Witten invariants of certain symplectic orbifolds. 
\cite{Chaidez_Tanny} Assumption 1.3 is needed in this process.  

\begin{remark} 
\cite{Chaidez_Tanny} works on the setting of conformal Hamiltonian manifolds satisfying the rationality condition. 
This is a general framework including contact manifolds and mapping tori of rational symplectic diffeomorphisms.
\cite{Chaidez_Tanny} Theorems 5 and 7 are formulated  in this setting. 
\end{remark} 

Theorem \ref{thm_CT} in particular implies that 
the canonical contact form on the boundary of any symplectic ellipsoid satisfies the strong closing property. 
This was conjectured in \cite{Irie_SCP}, 
and earlier proved in \cite{CDPT} without any assumption. 
This result also follows from \cite{Cineli_Seyfaddini} which we discuss below
(see \cite{Cineli_Seyfaddini} Section 6). 
Moreover, Xue \cite{Xue}
used KAM normal forms 
to prove strong closing lemmas (which are not exactly equivalent to the one formulated in the present article) 
for integrable Hamiltonian systems 
including the above examples on ellipsoids. 

Let us discuss the result by Cineli-Seyfaddini \cite{Cineli_Seyfaddini}
for Hamiltonian diffeomorphisms. 
For any closed symplectic manifold $(M,\omega)$, 
$\Ham(M,\omega)$ is equipped with a bi-invariant metric 
called spectral metric (or $\gamma$-metric) 
defined via a spectral invariant associated to the Hamiltonian Floer homology; see \cite{Cineli_Seyfaddini} Section 4. 
$\psi \in \Ham(M, \omega)$ is called \textit{$\gamma$-rigid} 
if there exists a sequence $(k_i)_{i \ge 1}$ of positive integers such that 
$\lim_{i \to \infty} k_i=\infty$ 
and $\lim_{i \to \infty} \psi^{k_i} \to \id_M$ with respect to the $\gamma$-metric. 

\begin{example} 
An element of a Hamiltonian torus action on $M$ is called a \textit{rotation}.  
Any rotation $\psi$ is $\gamma$-rigid, because there exists a sequence $(k_i)_i$ such that 
$\lim_{i \to \infty} \psi^{k_i}=\id_M$ in the $C^\infty$-topology, 
and $\gamma$ is continuous with respect to the $C^\infty$-topology. 
\end{example} 

\begin{example} 
$\psi \in \Ham(M,\omega)$ is called a \textit{pseudo-rotation} if it has finitely many periodic points. 
All pseudo-rotations of $\C P^n$ (with the standard Fubini-Study form), 
as well as Anosov-Katok pseudo-rotations are $\gamma$-rigid; 
see \cite{Cineli_Seyfaddini} Section 1.2 and the references therein. 
\end{example} 

The following is the main theorem of \cite{Cineli_Seyfaddini}. 

\begin{theorem}[\cite{Cineli_Seyfaddini}, Theorem 1]\label{thm_CS}
Suppose that $\psi \in \Ham(M,\omega)$ is $\gamma$-rigid. 
Then, for any $G \in \mca{H}(M) \setminus \{0\}$ such that $G \ge 0$ or $G \le 0$
and $\supp(G) \cap \mca{P}(\psi)=\emptyset$, 
there holds $\mca{P}(\ph^1_G \circ \psi) \cap \supp (G) \ne \emptyset$. 
\end{theorem}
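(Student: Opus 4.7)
The plan is to argue by contradiction, adapting the template of Lemma~\ref{lem_local_sensitivity_diffeo} from a $1$-parameter family of perturbations to iterates of a single perturbed map. Write $\psi_1 := \ph^1_G \circ \psi$ and suppose $\mca{P}(\psi_1) \cap \supp(G) = \emptyset$. Combined with the hypothesis $\mca{P}(\psi) \cap \supp(G) = \emptyset$, this forces a strong rigidity: for any $k \ge 1$ and any periodic point $p$ of either $\psi$ or $\psi_1$, the entire orbit of $p$ under the corresponding map consists of periodic points of that map and therefore avoids $\supp(G)$. Along such an orbit $\psi$ and $\psi_1$ coincide, so $\mathrm{Fix}(\psi^k) = \mathrm{Fix}(\psi_1^k)$ for every $k \ge 1$, with the orbits preserved.

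I would then convert this dynamical identification into spectral information using the Hamiltonian Floer spectral invariant $c:\Ham(M,\omega) \to \R$ from which $\gamma$ is built, so that $|c(\varphi)| \le \gamma(\varphi)$. Because the common periodic orbits avoid $\supp(G)$, the action values computed with the Hamiltonian generating $\psi_1^k$ differ from those computed with the Hamiltonian generating $\psi^k$ only by an explicit linear-in-$k$ shift $k c_0$ encoding the mean normalization of $G$; the integral of $G$ along orbits contributes nothing. Combining spectrality of $c$ with the fact that the action spectrum of $\psi^k$ is a Lebesgue-nullset (a Sard-type statement analogous to Lemma~\ref{lem_nullset}) should force $c(\psi_1^k) - c(\psi^k) - k c_0$ to lie in a fixed closed nullset, independent of $k$.

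The contradiction then comes from $\gamma$-rigidity combined with the sign hypothesis on $G$. Choose $k_i \to \infty$ with $\gamma(\psi^{k_i}) \to 0$, so that $c(\psi^{k_i}) \to 0$. The assumption $G \ge 0$ (the case $G \le 0$ is symmetric) yields a strict monotonicity $c(\psi_1) \ge c(\psi)$ that, when iterated, produces a linear-in-$k_i$ lower bound on $c(\psi_1^{k_i}) - c(\psi^{k_i})$ at a rate strictly greater than $c_0$, and this is asymptotically incompatible with the nullset constraint from the previous step. The main obstacle I anticipate is making this linear lower bound precise: since $\psi_1^k$ is a concatenation rather than an honest power of $\ph^1_G$, one cannot simply invoke subadditivity of $c$; one must track how the positive Hofer mass of $G$ accumulates along iterates and separate that rate cleanly from the neutral contribution $c_0$ coming from the mean. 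Controlling capping-disk contributions modulo the period group of $\omega$ is also delicate, and it is here that the rationality implicit in $\gamma$-rigidity becomes essential for the nullset constraint to genuinely clash with the linear growth.
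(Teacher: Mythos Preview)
The paper does not prove Theorem~\ref{thm_CS}; this is a survey, and the theorem is simply quoted from \cite{Cineli_Seyfaddini}. There is no proof in the paper to compare your attempt against.

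On its own merits, your sketch has a genuine gap at the spectrality/nullset step, and the contradiction you describe does not close. You assert that $c(\psi_1^k)-c(\psi^k)-kc_0$ lies in a fixed closed nullset independent of $k$, but spectrality gives only $c(\psi^k)\in\mathrm{Spec}(\psi^k)$ and $c(\psi_1^k)\in\mathrm{Spec}(\psi_1^k)=\mathrm{Spec}(\psi^k)+kc_0$ separately; a difference of two elements of a nullset is completely unconstrained (for instance the Cantor set minus itself is an interval), and in any case $\mathrm{Spec}(\psi^k)$ varies with $k$. The device that makes nullset arguments work in Lemmas~\ref{lem_local_sensitivity} and~\ref{lem_local_sensitivity_diffeo} is a \emph{continuous} one-parameter family along which the spectrum is pinned inside a single fixed nullset, forcing the continuous function $c$ to be constant. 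Here the natural family would be $s\mapsto(\ph^1_{sG}\circ\psi)^k$, but your hypotheses control $\mca{P}(\ph^1_{sG}\circ\psi)$ only at the endpoints $s=0$ and $s=1$; for $0<s<1$ nothing prevents periodic orbits from entering $\supp(G)$, so the spectrum is free to move and the continuity trap fails. Separately, even if the nullset constraint did hold, a closed nullset can be unbounded (action spectra typically are, once cappings are taken into account), so linear-in-$k_i$ growth of the difference is not by itself a contradiction; the ``rate strictly greater than $c_0$'' is precisely the step you flag as unresolved, and without it nothing remains. Finally, your appeal to ``rationality implicit in $\gamma$-rigidity'' is misplaced: every $\psi\in\Ham(M,\omega)$ is already rational in the sense of Section~3.4, and $\gamma$-rigidity carries no additional arithmetic content that would rescue the argument.
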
 

\begin{corollary} 
Any $\gamma$-rigid map in $\Ham(M,\omega)$ satisfies the strong closing property. 
\end{corollary}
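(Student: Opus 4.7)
The plan is to deduce the corollary directly from Theorem \ref{thm_CS} by matching two slightly different twist conventions. Fix a $\gamma$-rigid $\psi \in \Ham(M,\omega)$ and $H \in \mca{H}_+(M) \setminus \{0\}$; I must produce $\tau \in [0,1]$ with $\mca{P}(\psi_{\tau H}) \cap \supp(H) \ne \emptyset$, where $\psi_{\tau H} = \psi \circ \ph^1_{\tau H}$.

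First I would dispose of the trivial case: if $\mca{P}(\psi) \cap \supp(H) \ne \emptyset$, then $\tau = 0$ works because $\psi_0 = \psi$. So I may assume $\mca{P}(\psi) \cap \supp(H) = \emptyset$. Setting $G := H$, the function $G$ then satisfies $G \in \mca{H}(M) \setminus \{0\}$, $G \ge 0$, and $\supp(G) \cap \mca{P}(\psi) = \emptyset$, so Theorem \ref{thm_CS} produces a point $q \in \supp(H) \cap \mca{P}(\ph^1_H \circ \psi)$.

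Next I would transfer $q$ to a periodic point of $\psi_H := \psi \circ \ph^1_H$. Direct composition gives $(\psi \circ \ph^1_H)^k = (\ph^1_H)^{-1} \circ (\ph^1_H \circ \psi)^k \circ \ph^1_H$ for every $k \ge 1$, so $p := (\ph^1_H)^{-1}(q)$ has the same period under $\psi_H$ as $q$ has under $\ph^1_H \circ \psi$. To see that $p \in \supp(H)$, I would invoke the convention $\supp(H) = \pr_M(\supp_{[0,1] \times M}(H))$ together with $\supp(H) \subset (0,1) \times M$: for any $x \notin \supp(H)$ the function $H$ vanishes on an open neighborhood of $\{t\} \times \{x\}$ for each $t \in [0,1]$, hence $X_{H_t}(x) \equiv 0$ and $\ph^1_H(x) = x$. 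Thus if $p$ were outside $\supp(H)$ one would have $q = \ph^1_H(p) = p \notin \supp(H)$, contradicting $q \in \supp(H)$. Consequently $p \in \mca{P}(\psi_H) \cap \supp(H)$, and $\tau = 1$ realizes the strong closing property.

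There is essentially no substantive obstacle once the conventions are aligned; the only subtle point is recognizing that the perturbation appearing in Theorem \ref{thm_CS} is $\ph^1_H \circ \psi$, which is not equal to but merely conjugate (by $\ph^1_H$) to the map $\psi_H = \psi \circ \ph^1_H$ used in the definition of the strong closing property, and that this conjugating diffeomorphism preserves $\supp(H)$ setwise because its complement is pointwise fixed.
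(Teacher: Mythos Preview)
Your proof is correct. The paper states the corollary without proof, treating it as immediate from Theorem~\ref{thm_CS}; your argument supplies exactly the details one needs, namely the case split on whether $\supp(H)$ already meets $\mca{P}(\psi)$, and the observation that $\psi\circ\ph^1_H$ and $\ph^1_H\circ\psi$ are conjugate by $\ph^1_H$, which fixes the complement of $\supp(H)$ pointwise and hence carries periodic points in $\supp(H)$ to periodic points in $\supp(H)$.
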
 

Theorem \ref{thm_CS} is surprising by the following two reasons: 
(1) the result is stronger than the strong closing property, which considers $1$-parameter families of perturubations, 
(2) the proof uses only spectral invariants defined from Hamiltonian Floer homology. 
While Theorem \ref{thm_CS} is very interesting, several results suggest that $\gamma$-rigid Hamiltonian maps are rather scarce
(see \cite{Cineli_Seyfaddini} Section 1.3 and the references therein). 
The author expects that Hofer near-periodic contact forms are also scarce,
but is not aware of any specific result addressing this. 

\section{Minimal hypersurfaces} 

In the recent advances in the theory of minimal hypersurfaces (see surveys \cite{Marques_JJM} and \cite{Zhou}), 
there is a story which is very similar to the one we explained for low-dimensional Hamiltonian dynamics. 
In this section we briefly explain this story.

Let $(M,g)$ be a closed Riemannian manifold. 
A closed hypersurface $\Sigma \subset M$ is called \textit{minimal} 
if it is a critical point of the volume functional, 
or equivalently if its mean curvature vanishes identically. 
Let $\MH(g)$ denote the set of compact and connected minimal hypersurfaces of $(M,g)$. 

When $\dim M=2$, $\MH(M,g)$ consists of (images of) embedded closed geodesics. 
When $3 \le \dim M \le 7$, the fundamental result is that $\MH(M,g) \ne \emptyset$ for any metric $g$. 
This result follows from the combination of results by Almgren \cite{Almgren}, Pitts \cite{Pitts}, and Schoen-Simon \cite{Schoen_Simon}. 
When $\dim M \ge 8$, for any metric $g$, there exists at least one minimal hypersurface possibly having singularity; see \cite{Zhou} Theorem 1.1.

In recent years, Marques and Neves developed a theory of min-max invariants concerning minimal hypersurfaces. 
Here we discuss it only superficially; see an excellent survey \cite{Marques_JJM} on this theory. 
For any closed Riemannian manifold $(M,g)$, this theory assigns a 
nondecreasing sequence of positive real numbers $(\omega_k(M,g))_{k \ge 1}$ called \textit{volume spectrum}. 
These numbers are min-max values of the volume functional, 
and satisfy the following properties: 

\begin{description} 
\item[(Conformality):] $\omega_k(M,ag) = a^{\dim M - 1} \omega_k(M,g)$ for any $a \in \R_{>0}$.
\item[(Monotonicity):] $\omega_k(M, g') \ge \omega_k(M, g)$ for any metric $g'$ satisfying $g' \ge g$, i.e.  $|v|_{g'} \ge |v|_g \, (\forall v \in TM)$.
\item[(Spectrality):] If $3 \le \dim M \le 7$, each $\omega_k(M,g)$ is an element of 
\[ 
\mathrm{Spec}(M,g):= \bigg\{ \sum_{1 \le j \le N}  m_j \cdot \vol(\Sigma_j) \biggm{|} m_j \in \Z_{>0}, \, \Sigma_j \in \MH(M,g) \bigg\}. 
\] 
\item[(Weyl Law):] For any $d \in \Z_{>0}$, there exists $a(d) \in \R_{>0}$ such that 
\[ 
\lim_{k \to \infty} \frac{\omega_k(M,g)}{k^{1/(d+1)}} = a(d)  \cdot \vol(M,g)^{d/(d+1)}
\] 
for any $d+1$-dimensional closed Riemmanian manifold $(M,g)$. 
\end{description} 

The Weyl law was conjectured by Gromov and proved by Liokumovich-Marques-Neves \cite{LMN}. 
Chodosh-Mantoulidis \cite{Chodosh_Mantoulidis} proved that $a(1)=\sqrt{\pi}$. 
As far as the author knows, $a(d)$ for $d>1$ are not known. 

It is now clear that the volume spectrum behaves quite similarly to ECH/elementary spectral invariants we discussed in Section 5. 
By arguments similar to the proofs of Theorem \ref{thm_strong_closing_dim3}, 
we can prove the following theorem
(Theorem \ref{thm_closing_lemma_for_MH} is implicit in \cite{IMN}): 

\begin{theorem}[\cite{IMN}]\label{thm_closing_lemma_for_MH} 
Let $M$ be a closed manifold with dimension $3 \le \dim M \le 7$, and $g$ be a bumpy metric on $M$. 
For any $h \in C^\infty(M, \R_{\ge 0}) \setminus \{0\}$, there exist $t \in [0,1]$ 
and $\Sigma \in \MH(M, e^{th}g)$ such that $\Sigma \cap \supp(h) \ne \emptyset$. 
\end{theorem}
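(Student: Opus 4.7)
The plan is to mirror the proof of Lemma \ref{lem_local_sensitivity}, treating the volume spectrum $(\omega_k)_{k \ge 1}$ as a family of spectral invariants and the Weyl law (with $d = \dim M - 1$) as the analogue of the ECH Weyl law. Suppose for contradiction that some $h \in C^\infty(M, \R_{\ge 0}) \setminus \{0\}$ violates the conclusion: for every $t \in [0,1]$ and every $\Sigma \in \MH(M, e^{th}g)$, $\Sigma \cap \supp(h) = \emptyset$.

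First I would observe that since $e^{th}g \equiv g$ on $M \setminus \supp(h)$ and minimality is a local condition, every $\Sigma \in \MH(M, e^{th}g)$ lies in $M \setminus \supp(h)$ and is therefore also a minimal hypersurface of $g$ with unchanged volume: $\vol_{e^{th}g}(\Sigma) = \vol_g(\Sigma)$. Consequently $\mathrm{Spec}(M, e^{th}g) \subset \mathrm{Spec}(M, g)$ for every $t \in [0,1]$. Next, I would invoke the bumpy assumption on $g$, together with White's theorem and the standard compactness of smooth minimal hypersurfaces of bounded volume in dimensions $3 \le \dim M \le 7$, to conclude that $\MH(M,g)$ is countable: only finitely many elements appear below any given volume, and bumpiness forces each to be isolated. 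Hence $\mathrm{Spec}(M,g)$, being a countable collection of finite integer combinations of a countable set, is itself countable, and in particular a nullset.

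Spectrality now gives $\omega_k(M, e^{th}g) \in \mathrm{Spec}(M, e^{th}g) \subset \mathrm{Spec}(M, g)$ for all $t$, while continuity of $t \mapsto \omega_k(M, e^{th}g)$ follows from conformality and monotonicity via a standard sandwich: whenever $|t_1 - t_2| \cdot \|h\|_{C^0} \le \ep$, the pointwise inequality $e^{-\ep} e^{t_1 h} g \le e^{t_2 h} g \le e^\ep e^{t_1 h} g$ together with monotonicity and conformality forces $\omega_k(M, e^{t_2 h}g)$ to lie within a factor $e^{\pm c\ep}$ of $\omega_k(M, e^{t_1 h}g)$, for some $c = c(\dim M)$. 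A continuous function on $[0,1]$ taking values in a countable subset of $\R$ must be constant, so $\omega_k(M, e^{h}g) = \omega_k(M, g)$ for every $k$.

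Finally, applying the Weyl law to both metrics with $d = \dim M - 1$,
\[
\frac{\omega_k(M, g)}{k^{1/(d+1)}} \longrightarrow a(d)\,\vol(M,g)^{d/(d+1)}, \qquad
\frac{\omega_k(M, e^h g)}{k^{1/(d+1)}} \longrightarrow a(d)\,\vol(M, e^h g)^{d/(d+1)}.
\]
Since $h \ge 0$ and $h \not\equiv 0$ imply $\vol(M, e^h g) > \vol(M, g)$, we get $\omega_k(M, e^h g) > \omega_k(M, g)$ for all sufficiently large $k$, contradicting the equality established above. The step I expect to be the main obstacle is the countability of $\mathrm{Spec}(M, g)$ for a bumpy metric in the prescribed dimension range; this is where the bumpy hypothesis (together with the dimension restriction $\dim M \le 7$, which ensures smoothness of the minimal hypersurfaces produced by the Almgren--Pitts--Schoen--Simon machinery) enters, and it ties the argument to the regularity and compactness theory for minimal hypersurfaces. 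The rest is a transcription of the proof of Theorem \ref{thm_local_sensitivity_dim3}.
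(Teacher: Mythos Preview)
Your proposal is correct and follows essentially the same route the paper indicates: the paper says the theorem is proved ``by arguments similar to the proofs of Theorem~\ref{thm_strong_closing_dim3}'' and explicitly notes that bumpiness is needed precisely so that $\mathrm{Spec}(M,g)$ is countable (hence a nullset), which is exactly the structure of your contradiction argument via spectrality, continuity, and the Weyl law. Your identification of the countability of $\mathrm{Spec}(M,g)$ as the one nontrivial input beyond the formal properties of the volume spectrum matches the paper's own remark.
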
 

A Riemannian metric is called \textit{bumpy} if all compact minimal hypersurfaces are nondegenerate (as critical points of the volume functional). 
It is known that bumpyness is a $C^\infty$-generic condition. The assumption that $g$ is bumpy is required to show that 
$\mathrm{Spec}(M,g)$ is a null set (because it is countable). 

As a corollary of Theorem \ref{thm_closing_lemma_for_MH}, we obtain the generic density of minimal hypersurfaces: 

\begin{corollary}[\cite{IMN}]\label{cor_generic_density_of_MH} 
Let $M$ be a closed manifold with dimension $3 \le \dim M \le 7$. 
Then $\bigcup_{\Sigma \in \MH(M,g)} \Sigma$ is dense in $M$ for a $C^\infty$-generic metric $g$ on $M$. 
\end{corollary}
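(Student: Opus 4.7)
The plan is to mimic the proof of Corollary~\ref{cor_Hamiltonian_generic_density}, substituting Theorem~\ref{thm_closing_lemma_for_MH} for the Hamiltonian $C^1$-closing lemma. Let $\mca{G}$ denote the space of $C^\infty$ Riemannian metrics on $M$ with the $C^\infty$-topology, and fix a countable basis $(U_i)_{i \ge 1}$ of nonempty open subsets of $M$. For each $i$, define
\[
\mca{G}_i := \{ g \in \mca{G} \mid \text{there exists a nondegenerate } \Sigma \in \MH(M,g) \text{ with } \Sigma \cap U_i \ne \emptyset \}.
\]
The goal is to show that each $\mca{G}_i$ is open and dense in $\mca{G}$; then $\bigcap_i \mca{G}_i$ is residual, and any $g$ in this intersection satisfies the conclusion of the corollary.

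Openness of $\mca{G}_i$ follows from the implicit function theorem applied to the mean curvature operator: a nondegenerate compact minimal hypersurface is a transverse zero of this operator, hence it persists as a $C^\infty$-close nondegenerate minimal hypersurface under any sufficiently small $C^\infty$ perturbation of the metric. In particular, the perturbed hypersurface still intersects $U_i$.

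For density, I would start with an arbitrary $g \in \mca{G}$ and a neighborhood $\mca{V}$ of $g$. Using White's bumpy metric theorem, choose a bumpy $g_0 \in \mca{V}$. Then pick $h \in C^\infty(M, \R_{\ge 0}) \setminus \{0\}$ with $\supp(h) \subset U_i$ that is small enough that $e^{sh}g_0 \in \mca{V}$ for every $s \in [0,1]$. Applying Theorem~\ref{thm_closing_lemma_for_MH} to the bumpy metric $g_0$ and the bump $h$ yields $t \in [0,1]$ and $\Sigma \in \MH(M, e^{th}g_0)$ with $\Sigma \cap \supp(h) \ne \emptyset$, hence $\Sigma \cap U_i \ne \emptyset$.

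The main obstacle is that the metric $e^{th}g_0$ supplied by the closing lemma need not itself be bumpy, so $\Sigma$ might be degenerate and $e^{th}g_0$ might not yet lie in $\mca{G}_i$. I would close this gap by a further $C^\infty$-small perturbation of $e^{th}g_0$, kept inside $\mca{V}$, to a bumpy metric; by the standard structure and compactness theory for minimal hypersurfaces with bounded area and Morse index, together with a local degree argument near the (possibly isolated degenerate) critical point $\Sigma$ of the volume functional, at least one nondegenerate minimal hypersurface of the perturbed metric remains close to $\Sigma$ and continues to intersect $U_i$. This last perturbation is the technical heart of the argument and is the sole step without a completely straightforward analogue in the proof of Corollary~\ref{cor_Hamiltonian_generic_density}.
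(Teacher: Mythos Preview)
Your overall scheme matches the paper's: the paper presents Corollary~\ref{cor_generic_density_of_MH} as following from Theorem~\ref{thm_closing_lemma_for_MH} by the same countable-basis argument used for Corollary~\ref{cor_Hamiltonian_generic_density}, and that is exactly what you outline. Your identification of the nondegeneracy issue in step~4 is also correct --- it is the one place where the argument is not literally identical to the Hamiltonian case.

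However, your proposed resolution of step~4 has a genuine gap. A local degree argument does \emph{not} force a nearby minimal hypersurface to survive: a degenerate minimal hypersurface can have local degree zero for the mean-curvature operator and simply vanish under a generic perturbation to a bumpy metric (picture a one-parameter family in which a degenerate $\Sigma$ is the coalescence point of a cancelling pair). Nor does Sharp-type compactness help in the direction you need --- it lets you pass to limits of existing hypersurfaces, not manufacture them for the perturbed metric. Finally, the area and Morse-index bounds you invoke are not supplied by Theorem~\ref{thm_closing_lemma_for_MH} as stated; extracting them would require going back through its proof and the min-max origin of $\Sigma$ in the volume spectrum.

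The standard remedy is much simpler and avoids all of this. Once you have $\Sigma$ minimal for $g_1:=e^{th}g_0$, perturb the metric only at second order in the normal direction to $\Sigma$: in Fermi coordinates $g_1=dr^2+h_r$ near $\Sigma$, replace $h_r$ by $h_r+r^2\chi(r)\,f\,h_0$ with $\chi$ a cutoff and $f$ a small constant. This leaves the induced metric and second fundamental form of $\Sigma$ unchanged, so $\Sigma$ remains minimal, but it shifts $\mathrm{Ric}(\nu,\nu)|_\Sigma$ and hence the Jacobi operator of $\Sigma$ by a multiple of $f$; for a generic small $f$ the Jacobi operator becomes invertible. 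The resulting metric lies in $\mca{V}$ and has $\Sigma$ itself as a nondegenerate minimal hypersurface through $U_i$, hence belongs to $\mca{G}_i$. This is the direct analogue of the routine ``make the periodic orbit nondegenerate by a local perturbation'' step that the proof of Corollary~\ref{cor_Hamiltonian_generic_density} takes for granted.
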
 

Corollary \ref{cor_generic_density_of_MH} 
in particular resolves Yau's conjecture \cite{Yau} for the generic case. 
The conjecture claims that 
any $3$-dimensional closed Riemmanian manifold contains infinitely many (possibly immersed) minimal surfaces.
The conjecture was fully resolved by Song \cite{Song}
who established that 
any closed Riemannian manifold $(M,g)$ with $3 \le \dim M \le 7$ satisfies $\# \MH(M,g) = \infty$. 
Also, 
Marques-Neves-Song \cite{MNS} used the Weyl law to show the existence of equidistributed sequence of minimal hypersurfaces for generic metrics. 
As we explained in Section 6, 
this idea was adopted in Hamiltonian dynamics by \cite{Irie_JSG} and \cite{Prasad_equidistribution}. 

Let us finally discuss the case of surfaces. 
Chodosh-Mantoulidis \cite{Chodosh_Mantoulidis} proved that, 
for any $2$-dimensional closed Riemannian manifold $(M,g)$, 
each $\omega_k(M,g)$ is a finite sum $\sum_j m_j \mathrm{length}(\gamma_j)$, 
where $m_j$ are positive integers and $\gamma_j$ are (possibly immersed) nonconstant closed geodesics. 
This result, combined with continuity and the Weyl law of the volume spectrum, 
produces another proof of Corollary \ref{cor_geodesics}. 
It would be interesting to investigate  relations between the volume spectrum of $2$-dimensional closed Riemannian manifolds
and ECH/elementary capacities of their unit disk cotangent bundles. 

\section*{Acknowledgments.}
I thank 
Rohil Prasad and Dan Cristofaro-Gardiner for answering my question on \cite{CGPZ}, 
Kaoru Ono for answering my question on Gromov-Witten invariants of $\C P^2$, 
and Michael Hutchings for answering my question on elementary capacities.

\end{document}